\newtheorem{Th}{Theorem}
\newtheorem{prop}{Proposition}
\newtheorem{Def}{Definition}
\newtheorem{obs}{Remark}
\newtheorem{lema}{Lemma}
\def\rr{\mathbb{R}}
\def\intos{\int_{\Omega_{r_0}}}
\def\into{\int_{\Omega}}
\def\hoi{H_{0}^{1}(\Omega)}
\def\la{\lambda}
\def\n{\nabla}
\def\D{\Delta}
\def\q{\quad}
\def\intot{\iint\limits_{Q_T}}
\def\p{\partial}
\def\intot{\iint\limits_{Q_T}}
\def\eps{\varepsilon}
\def\eps{\varepsilon}
\def\into{\int_{\Omega}}
\def\hoi{H_{0}^{1}(\Omega)}
\def\d{\partial}
\def\eps{\varepsilon}
\numberwithin{equation}{section} \numberwithin{Th}{section}
\numberwithin{cor}{section} \numberwithin{lema}{section}
\numberwithin{prop}{section} \numberwithin{obs}{section}
\numberwithin{Def}{section}
\begin{document}
\title{Schr\"{o}dinger operators with boundary singularities: Hardy
inequality, Pohozaev identity and controllability results
 }
\author{Cristian Cazacu
 \footnote{BCAM - Basque Center for
Applied Mathematics, Bizkaia Technology Park 500, 48160, Derio,
Basque Country, Spain}  \footnote{Departamento de Matem\'{a}ticas,
Universidad Aut\'{o}noma de Madrid,
 Madrid 28049, Spain.} }

\maketitle
\begin{abstract}
The aim  of this paper is two folded. Firstly, we study the validity
of the Pohozaev-type identity for the Schr\"{o}dinger
 operator
$$A_\la:=-\D -\frac{\la}{|x|^2}, \q \la\in \rr,$$
in the situation where the origin is located on the boundary  of a
smooth domain $\Omega\subset \rr^N$, $N\geq 1$. The problem we
address is very much related to optimal Hardy-Poincar\'{e}
inequality with boundary singularities which has been investigated
in the recent past in various papers. In view of that,  the proper
functional framework is described and explained.

Secondly, we apply the Pohozaev identity not only to study
semi-linear elliptic equations but also to derive the method of
multipliers in order to study the exact boundary controllability of
the wave and Schr\"{o}dinger equations corresponding to the singular
operator $A_\la$. In particular, this complements and extends well
known results by Vanconstenoble and Zuazua \cite{judith}, who
discussed the same issue in the case of interior singularity.
\end{abstract}
\tableofcontents 

\section{Introduction}
In this paper we are dealing with the Schr\"{o}dinger operator
$A_{\la}:=-\D-\la/|x|^2$, $\la \in \rr$,
 acting in a domain where the potential $1/|x|^2$ is singular at the
boundary. Our main goal consists to study the control properties of
the corresponding wave and Schr\"{o}dinger equations. Moreover, we
are aimed to give necessary and sufficient conditions for the
existence of non-trivial solutions to semi-linear elliptic equations
associated to $A_\la$.
     Operators like $A_\la$ may arise in molecular
physics \cite{pot1}, quantum cosmology \cite{pot2}, combustion
models \cite{MR1616905} but also in the linearization of critical
nonlinear PDE's playing a crucial role in the asymptotic behaviour
of branches of solutions in bifurcation problems (e.g.
\cite{MR1605678}, \cite{MR1616516}). From the mathematical view
point they are interesting due to their criticality since  they are
homogeneous of degree -2.

The  qualitative properties of evolution problems involving the
operator $A_\la$ require either positivity or coercivity of $A_\la$
in the sense of quadratic forms in $L^2$. Roughly speaking, this is
equivalent to make use of Hardy-type inequalities. There is a large
literature concerning the study of such inequalities, especially in
the context of interior singularities (e.g. see \cite{vazzua},
\cite{adimurthi1}, \cite{MR2099546} and references therein). The
classical Hardy inequality  is stated as follows. Assume $\Omega$
 is a smooth bounded domain in $\rr^N$, $N\geq 3$,
  containing the origin, i.e.,
 $0\in \Omega$; then it follows
(see \cite{hardy-polya})
 \begin{equation}\label{Hardy}
 \into |\n u|^2 dx-\frac{(N-2)^2}{4}\into \frac{u^2}{|x|^2}dx>0, \q
 \forall u\in \hoi,
 \end{equation}
 and the constant $(N-2)^2/4$ is optimal and not attained.
  We remind that the optimal
Hardy constant is defined by the quotient
$$\mu(\Omega):=\inf_{u\in C_{0}^{\infty}(\Omega)}
 \Big(\into |\n u|^2 dx \big /\into u^2/|x|^2 dx\Big).$$
 In this paper, we
consider $\Omega$ to be a smooth subset of $\rr^N$, $N\geq 1$, with
the origin $x=0$ placed on its boundary $\Gamma$. Hardy inequalities
with an isolated singularity on the boundary were less investigated
so far. However, in the recent past some substantial work has been
developed in that direction.

 It has been proved that,
the best constants depends both on the local geometry near the
origin and the entire shape of the domain.

More precisely,  starting with the work by Filippas, Tertikas and
Tidblom \cite{terti}, and continuing with \cite{cristiCRAS},
\cite{Fall1}, \cite{musina}, it has been proved that,
 whenever $\Omega$ is a
smooth domain with the origin located on the boundary, there exists
a positive constant $r_0=r_0(\Omega, N)>0$ such that
\begin{equation}\label{best}
\mu(\Omega\cap B_{r_0}(0))=\frac{N^2}{4}.
\end{equation}
where $B_{r_0}(0)$ denotes the $N$-d ball of radius $r_0$ centered
at origin. Next  we recall the definition of  the upper half space
$\rr_{+}^N$ which is given by the set $\rr_{+}^{N}:=\{x=(x_1,
\ldots, x_{N-1}, x_N)=(x', x_N)\in \rr^{N-1}\times \rr \ | \ x_N>0
\}$.
 In addition, if
$\Omega\subset\rr_{+}^{N}$, $N\geq 1$, the new Hardy inequality
\begin{equation}\label{equ1}
\into|\nabla u|^2dx\geq \frac{N^2}{4}\into\frac{u^2}{|x|^2}dx \quad
\forall\quad u\in \hoi.
\end{equation}
holds true and the constant $N^2/4$ is optimal  i.e.
$\mu(\Omega)=N^2/4$.

Otherwise, if $\Omega$ is a smooth domain which, up to a rotation,
is not supported in $\rr_{+}^{N}$,  the constant $N^2/4$ is optimal,
up to lower order terms in $L^2(\Omega)$-norm as shown later in
inequality
 \eqref{oeq44}. In general $\mu(\Omega)=N^2/4$ is not
true for any smooth bounded domain $\Omega$ containing  the origin
on the boundary (e.g. \cite{Fall1}).

Without losing the generality, since the operator $A_\la$ is
invariant under rotations,  next we consider $\Omega$ such that
\begin{equation}\label{cond}
x\cdot \nu = O(|x|^2), \textrm{ on } \Gamma,
\end{equation}
where $\nu$ stands for the outward normal vector to $\Gamma$.
%
 Moreover, since optimal inequalities have been
obtained regardless the shape of $\Omega$, throughout the paper we
discuss two main
situations of geometries motivated by the remarks above.

\begin{enumerate}[C1.]
\item\label{eqq2} 
 $\Omega$ is a smooth domain satisfying \eqref{cond} and $x_N>0$ holds for
 all $x\in \Omega$ (i.e.
$\Omega\subset
\rr_{+}^{N}$).  
 \item\label{eqq3} $\Omega$ is a smooth domain satisfying \eqref{cond}
 such that $x_N$ changes sign in $\Omega$  $(\Omega \not \subset \rr_{+}^{N})$.
\end{enumerate}
Next we need to introduce the constant
\begin{equation}\label{diam}
R_\Omega=\sup_{x\in \overline{\Omega}}|x|.
\end{equation}
 The following optimal Hardy-Poincar\'{e} inequalities
are valid for each one of the cases above.

If $\Omega$ fulfills the case C1, then (e.g. \cite{cristiCRAS})  it
holds that
\begin{equation}\label{oeq3}
\forall \ u\in C_{0}^{\infty}(\Omega), \q \into |\nabla u|^2dx\geq
\frac{N^2}{4}\into \frac{u^2}{|x|^2}dx+\frac{1}{4}\into
\frac{u^2}{|x|^2\log^2(R_{\Omega}/|x|)}dx,
\end{equation}
and $N^2/4$ is the sharp constant.

If $\Omega$ satisfies the case C2 then (e.g. \cite{Fall1}) there
exist two constants $C_2=C_2(\Omega)\in \rr$ and $C_3=C_3(\Omega,
N)>0$ such that  for any $u\in C_{0}^{\infty}(\Omega)$ it holds
\begin{equation}\label{oeq44}
C_2\into u^2dx +\into |\nabla u|^2dx\geq \frac{N^2}{4}\into
\frac{u^2}{|x|^2}dx +C_3\into
\frac{u^2}{|x|^2\log^2(R_\Omega/|x|)}dx.
\end{equation}
  In view of these, let us  now describe the content of the paper.

 In  Section \ref{secpoho}, we  firstly introduce the
functional framework induced by the above Hardy inequalities. We
refer to the Hilbert space $H_\la$ defined in Subsection
\ref{1subsec}. Then we check the validity of the Pohozaev identity
for the Schr\"{o}dinger operator $A_\la$ in this functional setting
as follows. The domain of $A_\la$ is defined by
\begin{equation}\label{domeniu}
D(A_\la):=\{u\in H_\la \ |\ A_\la u \in L^2(\Omega)\},
\end{equation}
and it  holds
\begin{align}\label{Pohozaevintro}
\frac{1}{2}\int_{\Gamma}  (x\cdot \nu) \Big(\frac{\p u}{\p
\nu}\Big)^2d\sigma=-\into (x\cdot \n u) A_\la u  dx-
\frac{N-2}{2}||u||_{H_\la}^2, \q \forall\  u\in D(A_\la),
\end{align}
where $||\cdot||_{H_\la}$ denotes the norm associated to $H_\la$ and
We refer to Theorems \ref{trace}, \ref{poho} for a complete
statement of this result.
 For the sake of clarity,  we will mainly discuss the case C1 above.
  Nevertheless, similar  results could be also extended to the case C2 in a
weaker functional setting due to   weaker Hardy inequalities (see
Subsection \ref{2sec}).

Formally, identity \eqref{Pohozaevintro} can be  obtained by direct
integrations. However, this is not rigorously allowed because the
lack of regularity of $A_\la$ at the origin, otherwise the potential
$1/|x|^2$ is bounded and the standard elliptic regularity applies.
In addition, we need to justify the integrability of the boundary
term in \eqref{Pohozaevintro} which is no more obvious since the
singularity is located on the boundary and  standard trace
regularity does not applies. As we mentioned before, we give a
rigorous justification of these facts in Theorems \ref{trace},
\ref{poho}.

Pohozaev type identities arise in many applications and mostly when
studying non-linear equations (see \cite{MR1625845},
\cite{MR2097030}, \cite{peraldavila} and references therein).

 In Section \ref{8sec}, we apply Theorem \ref{poho} to
 characterize the existence
 of non-trivial solutions
  to a semi-linear singular elliptic PDE  in star-shaped
domains. We refer mainly to Theorem \ref{th1}.

In Section \ref{5sec} we present some applications of Theorem
\ref{poho} in Controllability of conservative systems like wave and
Schr\"{o}dinger equations, for which the multiplier method plays a
crucial role.

 In the last few decades,  most
of the studies in Controllability Theory and its applications to
evolution PDEs,  have applied methods like \textit{Hilbert
Uniqueness Method} (HUM) introduced by J. L. Lions in \cite{lions1},
Carleman estimates developed by Fursikov and Imanuvilov
\cite{fursikov}, microlocal analysis due to Bardos, Lebeau and Rauch
(\cite{BardosLeRa}, \cite{BardosLeRa2}), but also multiplier
techniques with the pioneering papers by Komornik and Zuazua
(\cite{komornik}, \cite{zuazua2}, \cite{zuazua3}). In particular,
the controllability properties and stabilization of the heat like
equation corresponding to $A_\la$ have been analyzed in
\cite{heatjudith}, \cite{sylvain}, \cite{vanc} in the case of
interior singularity using tools like multiplier techniques and
Carleman estimates.

Now, let us detail the problem we are interested in Section
\ref{5sec}.  For $N\geq 1$ we consider  a bounded smooth domain
$\Omega\subset \rr^N$ where $\Gamma$ denotes its boundary. Moreover,
we state by $\Gamma_0$ a non-empty part of the set $\Gamma$ that
will be precise later.

Next we consider the Wave-like process
\begin{equation}\label{eq124}\left\{\begin{array}{ll}
  u_{tt}-\Delta u-\lambda \frac{u}{|x|^2}=0, & (t,x)\in (0, T)\times \Omega, \\
  u(t,x)=h(t,x), & (t,x)\in (0,T)\times \Gamma_0, \\
  u(t,x)=0, & (t,x)\in (0,T)\times (\Gamma\setminus \Gamma_0), \\
  u(0,x)=u_0(x), &  x\in \Omega, \\
  u_t(0,x)=u_1(x), & x \in \Omega. \\
\end{array}\right.
\end{equation}

 To make the problem under consideration
precise  we say that the system (\ref{eq124}) is exactly
controllable from $\Gamma_0$, in time $T$, if for any initial data
$(u_0,u_1)\in L^2(\Omega)\times H_{\la}^{'}$ and any target
$(\overline{u_0},\overline{u_1})\in L^2(\Omega)\times H_{\la}^{'}$ ,
there exists a control $h\in L^2((0,T)\times \Gamma_0)$ such that
the solution of (\ref{eq124}) satisfies:
    $$(u_t(T,x),u(T,x))=(\overline{u_1}(x),\overline{u_0}(x)) \quad \textrm{ for all } x\in \Omega. $$

This issue was analyzed by Vancostenoble and Zuazua \cite{judith}
under the assumption that  the singularity  $x=0$ is  located in the
interior of $\Omega$.  They proved well-posedness and exact
controllability of the system (\ref{eq124}) for any $\la\leq
\la_\star:=(N-2)^2/4$ from the boundary observability region
$\Gamma_0$ described by
\begin{equation}\label{oeq2}
\Gamma_0:=\{x\in \Gamma\ |\ x \cdot \nu \geq 0\}.
\end{equation}
Roughly speaking, the authors showed in \cite{judith} that the
parameter $\lambda_\star$ is critical when asking the well-posedness
and control properties of (\ref{eq124}), and the results are very
much related to the best constant in the Hardy inequality with
interior singularity.

In Section \ref{5sec},  we address the same controllability question
in the case of boundary singularity. Our main result asserts that
for the same geometrical setup \eqref{oeq2}, we can increase the
range of values $\lambda$ (from $\la_\star$ to $\la(N):=N^2/4$) for
which the exact boundary controllability of system \eqref{eq124}
holds. This is due to the new Hardy inequalities above.

 By now classical HUM, the Controllability of system
(\ref{eq124}) is equivalent to so-called \textsl{Observability
Inequality} for the adjoint system,
\begin{equation}\label{eq2}\left\{\begin{array}{ll}
  v_{tt}-\Delta v-\lambda \frac{v}{|x|^2}=0, & (t,x)\in (0, T)\times \Omega, \\
  v(t,x)=0, & (t,x)\in (0,T)\times \Gamma, \\
  v(0,x)=v_0(x), &  x\in \Omega, \\
  v_t(0,x)=v_1(x), & x \in \Omega, \\
\end{array}\right.
\end{equation}
which formally states that for any $\la\leq \la(N)$  and $T>0$ large
enough there exists a constant $C_T>0$ such that
\begin{equation}\label{observ}
C_T\Big( ||v_1||_{L^2(\Omega)}^{2} + \into \Big[|\n v_0(x)|^2-\la
\frac{v_0^2(x)}{|x|^2}\Big] dx\Big)\leq
\int_{0}^{T}\int_{\Gamma_0}(x\cdot \nu)\Big(\frac{\p v}{\p
\nu}\Big)^2d\sigma dt,
\end{equation}
holds true for $v$ solution of \eqref{eq2}. The main tool to prove
\eqref{observ} relies on the multiplier method and
compactness-uniqueness argument \cite{lions1}. In view of that,
Pohozaev identity provides a direct tool to show that
 the solution of system \eqref{eq2} satisfies
 the multiplier
 identity which formally is given by
\begin{equation}\label{multiident}
\frac{1}{2}\int_{0}^{T}\int_{\Gamma} (x\cdot \nu)\Big(\frac{\p v
}{\p \nu}\Big)^2d\sigma dt =\frac{T}{2}
(||v_1||_{L^2(|\Omega)}^{2}+||v_0||_{H_\la}^2) +\int_{\Omega} v_t
\big(x\cdot \n v+\frac{N-1}{2}v\big)\Big|_{0}^{T}dx,
\end{equation}
producing a ``Hidden regularity" efect for the normal derivative. We
refer to Theorem \ref{Wmult} for a rigorous statement. As a
consequence, the solution of system \eqref{eq2} verifies the reverse
Observability inequality. Then identity \eqref{multiident} together
with   the sharp-Hardy inequality stated in Theorem \ref{tu8}   lead
to \textsl{Observability inequality} \eqref{observ}, fact emphasized
in Theorem \ref{t1}.

\begin{Th}\label{tu8}
Assume $\Omega$  satisfies one of the cases C1-C2 . Then, there
exists a constant $C=C(\Omega)\in \rr$ such that
\begin{equation}\label{equu41}
\int_{\Omega}|x|^2|\nabla v|^2dx\leq
R_{\Omega}^{2}\Big[\int_{\Omega}|\nabla
v|^2dx-\frac{N^2}{4}\int_{\Omega}\frac{v^2}{|x|^2}dx\Big]+C\int_{\Omega}v^2dx\quad
\forall v\in C_{0}^{\infty}(\Omega).
\end{equation}
\end{Th}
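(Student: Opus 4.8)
The plan is to expand the right-hand side of \eqref{equu41} and reduce the inequality to one of the weighted Hardy inequalities \eqref{oeq3}--\eqref{oeq44} already available in each geometric case. Write $x\cdot\n v$ in the pointwise decomposition of the gradient: for $x\neq 0$ we have $|\n v|^2 = |\n_\tau v|^2 + (\p_r v)^2$ where $\p_r v = x\cdot\n v/|x|$ is the radial derivative and $\n_\tau v$ the tangential part. Consequently
\begin{equation}\label{split}
|x|^2|\n v|^2 = (x\cdot\n v)^2 + |x|^2|\n_\tau v|^2,
\end{equation}
and since $|x|\leq R_\Omega$ on $\overline\Omega$, it suffices to prove
\begin{equation}\label{reduced}
\int_\Omega \Big[\,\frac{(x\cdot\n v)^2}{R_\Omega^2} + |\n_\tau v|^2\,\Big]dx \leq \int_\Omega |\n v|^2 dx - \frac{N^2}{4}\int_\Omega\frac{v^2}{|x|^2}dx + \frac{C}{R_\Omega^2}\int_\Omega v^2 dx.
\end{equation}
The first step is therefore to handle the radial part. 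I would introduce the substitution $v = |x|^{-\frac{N}{2}} w$, or more directly integrate by parts the cross term, to turn $\int_\Omega(x\cdot\n v)^2\,|x|^{-2}dx$ into an expression involving $\int\frac{v^2}{|x|^2}dx$; the boundary contribution here is controlled because $x\cdot\nu = O(|x|^2)$ by \eqref{cond}, so $\int_\Gamma(x\cdot\nu)v^2|x|^{-2}d\sigma$ is finite and in fact bounded by a multiple of $\int_\Gamma v^2 d\sigma$, which vanishes for $v\in C_0^\infty(\Omega)$. This reduces the radial term to a one-dimensional computation in $r$.

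The key computation is the one-dimensional weighted Hardy inequality along rays: for the radial profile one shows that $\int_0^{R_\Omega}(\p_r v)^2 r^{N-1}dr$ dominates $\frac{N^2}{4}\int_0^{R_\Omega}\frac{v^2}{r^2}r^{N-1}dr$ plus the logarithmic remainder, and crucially that the quantity $\int_0^{R_\Omega}(\p_r v)^2 r^{N-1}dr - \frac{1}{R_\Omega^2}\int_0^{R_\Omega}(\p_r v)^2 r^{N+1}dr$ is nonnegative (this is just $r\leq R_\Omega$ again). After integrating over the sphere, the tangential part $\int_\Omega|\n_\tau v|^2 dx$ on the left of \eqref{reduced} is absorbed trivially by the same term on the right of \eqref{reduced}, because $|\n v|^2 = (\p_r v)^2 + |\n_\tau v|^2$ exactly. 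What actually needs proving is that $\int_\Omega\frac{(x\cdot\n v)^2}{R_\Omega^2}dx \leq \int_\Omega(\p_r v)^2 dx - \frac{N^2}{4}\int_\Omega\frac{v^2}{|x|^2}dx + \frac{C}{R_\Omega^2}\int v^2$, i.e. $\frac{1}{R_\Omega^2}\int_\Omega (\p_r v)^2|x|^2 dx \le \int_\Omega (\p_r v)^2 dx - \frac{N^2}{4}\int\frac{v^2}{|x|^2} + \frac{C}{R_\Omega^2}\int v^2$; and since $(\p_r v)^2|x|^2/R_\Omega^2 \le (\p_r v)^2$ pointwise, it even suffices to have $\frac{N^2}{4}\int\frac{v^2}{|x|^2}\le \frac{C}{R_\Omega^2}\int v^2 + \big(\int(\p_r v)^2 - \frac{1}{R_\Omega^2}\int(\p_r v)^2|x|^2\big)$, which is precisely the form of the sharp Hardy inequalities \eqref{oeq3} (case C1) and \eqref{oeq44} (case C2) once one checks that the logarithmic term $\int\frac{v^2}{|x|^2\log^2(R_\Omega/|x|)}dx$ controls the ``gap'' $\int(\p_r v)^2(1 - |x|^2/R_\Omega^2)dx$ from below — or, more robustly, one simply keeps the gap term on the favorable side and discards it, using only that it is nonnegative.

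I expect the main obstacle to be the rigorous justification of the integration by parts producing the radial Hardy term, specifically controlling the boundary integral over $\Gamma$ near the singular point $x=0$. Although for $v\in C_0^\infty(\Omega)$ the function vanishes near $\Gamma$, one must still be careful that all the intermediate weighted integrals (e.g. $\int\frac{(x\cdot\n v)^2}{|x|^2}dx$ and $\int\frac{v^2}{|x|^4}(x\cdot\nu)\,\cdots$) are finite and that the divergence theorem applies on $\Omega\setminus B_\eps(0)$ with controlled limit as $\eps\to 0$; the hypothesis \eqref{cond}, $x\cdot\nu = O(|x|^2)$, is exactly what makes the $\eps$-sphere contribution $\int_{\partial B_\eps}$ and the boundary-flux term tractable. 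The secondary technical point is to verify that constants can be arranged so that the coefficient of $\int\frac{v^2}{|x|^2}dx$ is exactly $N^2/4$ and not merely $N^2/4 - o(1)$; this is where invoking the sharp inequalities \eqref{oeq3}/\eqref{oeq44} rather than reproving Hardy from scratch does the work, and the free constant $C=C(\Omega)$ (allowed to be any real number, including negative) in \eqref{equu41} absorbs whatever lower-order $L^2$ slack appears, in particular the constant $C_2$ from \eqref{oeq44} in case C2.
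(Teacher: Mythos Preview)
Your reduction has a genuine gap at the cancellation of the tangential derivative. You first \emph{inflate} the left side by replacing $\frac{|x|^2}{R_\Omega^2}|\nabla_\tau v|^2$ with the larger $|\nabla_\tau v|^2$ (this is your ``it suffices to prove \eqref{reduced}'' step), and only then subtract $\int_\Omega|\nabla_\tau v|^2\,dx$ from both sides. The net effect is that the angular derivative disappears entirely from the right-hand side, and you are left needing
\[
\frac{N^2}{4}\int_\Omega\frac{v^2}{|x|^2}\,dx \;\le\; \int_\Omega(\partial_r v)^2\Big(1-\frac{|x|^2}{R_\Omega^2}\Big)dx + \frac{C}{R_\Omega^2}\int_\Omega v^2\,dx.
\]
This inequality is \emph{false}. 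A scaling $v_\eps(x)=v(x/\eps)$ with $\eps\to 0$ kills the $L^2$ term and the weight $(1-|x|^2/R_\Omega^2)$ tends to $1$, so it would force $\frac{N^2}{4}\int\frac{v^2}{|x|^2}\le\int(\partial_r v)^2$ for all $v\in C_0^\infty(\rr_+^N)$. But writing $v=f(r)g(\omega)$ with $g\in C_0^\infty(S_+^{N-1})$ reduces this to the one-dimensional Hardy inequality $\int_0^\infty f'^2 r^{N-1}dr\ge\frac{N^2}{4}\int_0^\infty f^2 r^{N-3}dr$, whose sharp constant is $(N-2)^2/4$, not $N^2/4$. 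The jump from $(N-2)^2/4$ to $N^2/4$ in \eqref{oeq3}--\eqref{oeq44} comes precisely from the angular derivative (the Dirichlet condition on the flat part of $\Gamma$ forces a first spherical eigenvalue $N-1$), which you have discarded. Invoking \eqref{oeq3}/\eqref{oeq44} does not help, since those involve the full gradient $|\nabla v|^2$, not $(\partial_r v)^2$ alone.

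The paper avoids this over-cancellation by never splitting into radial and tangential parts. Instead it uses a ground-state substitution $v=\phi u$ with $\phi=x_N|x|^{-N/2}$ (case G1), a positive solution of $-\Delta\phi=\frac{N^2}{4}\frac{\phi}{|x|^2}$. The basic identity $\int|\nabla v|^2=\int\phi^2|\nabla u|^2-\int\frac{\Delta\phi}{\phi}v^2$ turns the Hardy functional into the manifestly nonnegative $\int\phi^2|\nabla u|^2$, and similarly $\int|x|^2|\nabla v|^2$ is expressed via $\int|x|^2\phi^2|\nabla u|^2$ plus lower-order terms in $v$. The pointwise bound $|x|^2\le R_\Omega^2$ is then applied to $\int\phi^2|\nabla u|^2$, which keeps the full angular information encoded in $\phi$ intact. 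A cut-off argument transfers the local estimate to all of $\Omega$. If you want to salvage your route, you must keep the tangential term with its honest weight $(1-|x|^2/R_\Omega^2)$ on the right and argue a weighted Hardy inequality with the full gradient; but at that point the ground-state substitution is the cleaner tool.
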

The proof of Theorem \ref{tu8} is given in the Appendix.
\begin{obs}
The result of Theorem \ref{tu8}, and precisely the constant
$R_\Omega^2$ which appears in inequality \eqref{equu41}, helps to
obtain the control time $T>T_0=2 R_\Omega$ in \eqref{observ}, which
is sharp from the Geometric Control Condition considerations, see
\cite{BardosLeRa}.
\end{obs}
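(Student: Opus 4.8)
The plan is to track how the constant $R_\Omega^2$ in \eqref{equu41} propagates through the multiplier identity \eqref{multiident} and fixes the critical time $T_0=2R_\Omega$. I would start from \eqref{multiident} for a solution $v$ of the adjoint system \eqref{eq2}, together with conservation of the energy $E:=\frac{1}{2}\big(||v_t(t)||_{L^2(\Omega)}^{2}+||v(t)||_{H_\la}^2\big)$, which is constant in $t$ whenever $\la\leq N^2/4$, precisely because the Hardy inequality \eqref{equ1} makes $||\cdot||_{H_\la}$ a genuine (nonnegative) norm. Writing $X(t):=\into v_t\big(x\cdot\n v+\tfrac{N-1}{2}v\big)dx$ and using $||v_1||_{L^2(\Omega)}^2+||v_0||_{H_\la}^2=2E$, identity \eqref{multiident} reads $\int_0^T\int_\Gamma (x\cdot\nu)(\p v/\p\nu)^2\,d\sigma\,dt=2TE+2\big(X(T)-X(0)\big)$. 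Since $\Gamma_0=\{x\in\Gamma : x\cdot\nu\geq 0\}$ and the integrand $(x\cdot\nu)(\p v/\p\nu)^2$ is nonpositive on $\Gamma\setminus\Gamma_0$, discarding that part gives the one-sided bound
\[
\int_0^T\!\int_{\Gamma_0}(x\cdot\nu)\Big(\frac{\p v}{\p\nu}\Big)^2 d\sigma\,dt\;\geq\; 2TE+2\big(X(T)-X(0)\big).
\]

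The crux is to estimate $|X(t)|\leq R_\Omega E$ modulo lower order terms, and this is exactly where Theorem \ref{tu8} is decisive. I would apply Young's inequality to the leading piece with the sharp weight calibrated to $R_\Omega$,
\[
\Big|\into v_t\,(x\cdot\n v)\,dx\Big|\;\leq\;\frac{R_\Omega}{2}\into v_t^2\,dx+\frac{1}{2R_\Omega}\into|x\cdot\n v|^2\,dx,
\]
and then bound $\into|x\cdot\n v|^2\leq\into|x|^2|\n v|^2$ by \eqref{equu41}. The essential point is that the naive estimate $\into|x\cdot\n v|^2\leq R_\Omega^2\into|\n v|^2$ is useless: for $\la>0$ one has $\into|\n v|^2=||v||_{H_\la}^2+\la\into v^2/|x|^2$, and the singular term is not controlled by $E$. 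Theorem \ref{tu8} circumvents this by carrying the Hardy deficit with the coefficient $R_\Omega^2$,
\[
\into|x|^2|\n v|^2\,dx\leq R_\Omega^2\Big[\into|\n v|^2\,dx-\frac{N^2}{4}\into\frac{v^2}{|x|^2}\,dx\Big]+C\into v^2\,dx\leq R_\Omega^2\,||v||_{H_\la}^2+C\into v^2\,dx,
\]
where the last step uses $\la\leq N^2/4$ so that the bracket is dominated by $||v||_{H_\la}^2$. With the weight above, the two leading contributions become $\frac{R_\Omega}{2}\into v_t^2$ and $\frac{R_\Omega}{2}||v||_{H_\la}^2$, whose sum is exactly $R_\Omega E$; the remainder $\frac{C}{2R_\Omega}\into v^2$, together with the Cauchy--Schwarz bound of $\frac{N-1}{2}\into v_t v$ (splitting off an arbitrarily small multiple $\delta E$ of the energy and a multiple of $\into v^2$), is lower order. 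Thus $|X(t)|\leq (R_\Omega+\delta)E+c_\delta\into v^2\,dx$ for every $\delta>0$.

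Inserting this into the displayed one-sided bound, using $X(T)-X(0)\geq-\big(|X(T)|+|X(0)|\big)$ and $E(t)\equiv E$, I obtain
\[
\int_0^T\!\int_{\Gamma_0}(x\cdot\nu)\Big(\frac{\p v}{\p\nu}\Big)^2 d\sigma\,dt\;\geq\;2\big(T-2R_\Omega-2\delta\big)E-\textrm{l.o.t.},
\]
where the lower order term collects the $\into v^2$ contributions at $t=0,T$. For any $T>2R_\Omega$ one chooses $\delta$ small enough that the coefficient is positive, which yields the observability inequality \eqref{observ} up to the compact term; the latter is then removed by the standard compactness--uniqueness argument of \cite{lions1}, reducing it to unique continuation for the stationary operator $A_\la$. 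The main obstacle is precisely the weighted Young step: one must check that a single weight $R_\Omega$ simultaneously calibrates the kinetic and potential contributions of $X(t)$ to $R_\Omega E$, which succeeds only because \eqref{equu41} delivers the Hardy deficit with the coefficient $R_\Omega^2$ rather than a larger constant. Finally, the sharpness of $T_0=2R_\Omega$ follows from the Geometric Control Condition of \cite{BardosLeRa}: for the geometries C1--C2 a ray of geometric optics issued near the point of $\overline{\Omega}$ realizing the supremum in \eqref{diam} travels a distance of order $2R_\Omega$ before meeting $\Gamma_0$, so observability from $\Gamma_0$ cannot hold for $T<2R_\Omega$, and the multiplier time matches this lower bound.
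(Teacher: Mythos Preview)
Your proposal is correct and follows the same strategy as the paper: this Remark has no separate proof, its content being established in the proof of Theorem~\ref{t1}, and your argument reproduces that proof. The one minor difference is in how the cross term is handled. You split $X(t)=\into v_t(x\cdot\nabla v)\,dx+\tfrac{N-1}{2}\into v_t v\,dx$ and bound the second piece separately by $\delta E+c_\delta\|v\|_{L^2}^2$, incurring a $\delta$-loss that you later absorb. The paper instead applies Cauchy--Schwarz with weight $R_\Omega$ to the full multiplier $\tfrac{N-1}{2}v+x\cdot\nabla v$, expands the square, and uses the exact identity $\into v(x\cdot\nabla v)\,dx=-\tfrac{N}{2}\into v^2\,dx$; this produces directly $|X(t)|\le R_\Omega E+C\|v\|_{L^2}^2$ with no $\delta$. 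Both routes yield $(T-2R_\Omega)E\le \tfrac12\int_0^T\int_{\Gamma_0}(x\cdot\nu)(\partial v/\partial\nu)^2\,d\sigma\,dt+\text{l.o.t.}$ and hence the sharp threshold $T>2R_\Omega$, with the lower order term removed by the same compactness--uniqueness argument you cite.
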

Although Theorem \ref{tu8} is sharp for our applications to
controllability, it is worth mentioning  that we are able to obtain
a more general result as follows.
\begin{Th}\label{tuu8}
Assume $\Omega$  satisfies one of the cases C1-C2. Let be $\eps>0$.
Then, there exists a constant $C_\eps=C(\Omega, \eps)\in \rr$ such
that
\begin{equation}\label{equuu41}
\int_{\Omega}|x|^\eps|\nabla v|^2dx\leq
R_{\Omega}^{\eps}\Big[\int_{\Omega}|\nabla
v|^2dx-\frac{N^2}{4}\int_{\Omega}\frac{v^2}{|x|^2}dx\Big]+C_\eps\int_{\Omega}v^2dx\quad
\forall v\in C_{0}^{\infty}(\Omega).
\end{equation}
\end{Th}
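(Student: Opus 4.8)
The plan is to recast \eqref{equuu41} as a weighted Hardy inequality and to prove it by a ground‑state substitution adapted to the weight $|x|^\eps$. Since $|x|\le R_\Omega$ on $\Omega$, writing $|x|^\eps|\n v|^2=R_\Omega^\eps|\n v|^2-(R_\Omega^\eps-|x|^\eps)|\n v|^2$ shows that \eqref{equuu41} is equivalent to
\[
\into(R_\Omega^\eps-|x|^\eps)|\n v|^2\,dx+C_\eps\into v^2\,dx\ \ge\ R_\Omega^\eps\,\frac{N^2}{4}\into\frac{v^2}{|x|^2}\,dx,\q v\in C_0^\infty(\Omega).
\]
The basic identity I would use is: for a weight $P=P(x)\ge0$ and any $\psi\in C^2(\Omega\setminus\{0\})$ with $\psi>0$ on $\Omega$,
\[
\into P|\n v|^2=\into\frac{-\mathrm{div}(P\n\psi)}{\psi}\,v^2+\into P\psi^2\Big|\n\tfrac{v}{\psi}\Big|^2\ \ge\ \into\frac{-\mathrm{div}(P\n\psi)}{\psi}\,v^2,\q v\in C_0^\infty(\Omega),
\]
which holds because $v/\psi\in C_0^\infty(\Omega\setminus\{0\})$, so all boundary terms in the integration by parts vanish. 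Taking $P(x):=R_\Omega^\eps-|x|^\eps$, the problem is reduced to exhibiting a positive $\psi$ with $-\mathrm{div}(P\n\psi)/\psi\ge R_\Omega^\eps\,\tfrac{N^2}{4}\,|x|^{-2}-C$ near the origin.

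In case C1 the natural candidate is $\psi_0(x)=x_N|x|^{-N/2}$, the virtual ground state of the sharp Hardy inequality \eqref{equ1}: indeed $-\D\psi_0=\tfrac{N^2}{4}|x|^{-2}\psi_0$, and $\psi_0>0$ on $\Omega$ since $x_N>0$ there. With $\n P=-\eps|x|^{\eps-2}x$ a direct computation gives
\[
\frac{-\mathrm{div}(P\n\psi_0)}{\psi_0}=R_\Omega^\eps\,\frac{N^2}{4}\,\frac{1}{|x|^2}-K_\eps\,|x|^{\eps-2},\q K_\eps:=\frac{N^2}{4}+\eps\,\frac{N-2}{2}.
\]
For $\eps\ge2$ the error $K_\eps|x|^{\eps-2}$ stays bounded near $0$, so $\psi_0$ already works and the proof is short; this recovers Theorem \ref{tu8} when $\eps=2$. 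For $0<\eps<2$ the term $|x|^{\eps-2}$ is singular at the origin and must be corrected: I would replace $\psi_0$ by $\psi(x):=x_N|x|^{-N/2}\eta(|x|)$, where $\eta(r)=\sum_{k=0}^{n}a_kr^{k\eps}$ is a polynomial in $r^\eps$ with $a_0=1$ and $a_k=-a_{k-1}\,\frac{K_\eps-k(k-1)\eps^2}{k^2\eps^2R_\Omega^\eps}$, the exponent $n$ being the least integer with $(n+1)\eps>2$. These coefficients are chosen precisely so that in the resulting formula for $-\mathrm{div}(P\n\psi)/\psi$ all singular powers $|x|^{k\eps-2}$, $k\le n$, cancel, leaving only a remainder $\asymp|x|^{(n+1)\eps-2}$, which is bounded near $0$; since $\eta(0)=1$, on a small ball $\Omega\cap B_\rho$ one gets $-\mathrm{div}(P\n\psi)/\psi\ge R_\Omega^\eps\tfrac{N^2}{4}|x|^{-2}-C_0$ with $\psi>0$.

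Because $R_\Omega^\eps-|x|^\eps$ degenerates as $|x|\to R_\Omega$, the previous step is only local, so I would finish with an IMS‑type partition: choose $\chi_1,\chi_2\in C^\infty$ with $\chi_1^2+\chi_2^2\equiv1$, $\chi_1\equiv1$ on $B_{\rho/4}$, $\mathrm{supp}\,\chi_1\subset B_{\rho/2}$, and set $v_j=\chi_jv$. To $v_1$ (supported in $\Omega\cap B_{\rho/2}$) one applies the weighted identity together with the bound just obtained, while for $v_2$ (supported where $|x|\gtrsim\rho$) one uses only $|x|^\eps\le R_\Omega^\eps$ and $|x|^{-2}\lesssim\rho^{-2}$. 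The localization formula $\sum_j|\n(\chi_jv)|^2=|\n v|^2+\big(\sum_j|\n\chi_j|^2\big)v^2$ and $v_1^2+v_2^2=v^2$ recombine these two estimates into \eqref{equuu41}, the coefficient of the Hardy deficit remaining exactly $R_\Omega^\eps$ and the price being an additional term $\lesssim\into v^2$; thus $C_\eps$ depends only on $\Omega$, $N$, $\eps$. Case C2 is handled the same way, except that $x_N$ changes sign on $\Omega$ so $\psi_0$ is inadmissible; here one replaces it by the positive supersolution of $-\D\psi_0\ge(\tfrac{N^2}{4}|x|^{-2}-C_2)\psi_0$ on $\Omega$ provided by \eqref{oeq44} (Allegretto--Piepenbrink), which by the tangency condition \eqref{cond} can be taken comparable to $x_N|x|^{-N/2}$ near the origin, and then applies the same correction $\eta(|x|)$ and localization. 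The main difficulty is exactly the sub‑quadratic regime $0<\eps<2$: the bare ground state leaves in the identity an error of order $\into|x|^{\eps-2}v^2$, which is not dominated by $\into v^2$ nor --- without spoiling the sharp constant $R_\Omega^\eps$ --- by the Hardy deficit, and the polynomial correction $\eta$ is precisely the device that annihilates it.
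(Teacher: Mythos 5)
Your case C1 argument is correct and, at its core, it follows the same scheme as the paper's proof of Theorem \ref{tu8}, which is the proof the paper says Theorem \ref{tuu8} should repeat: factor out the Hardy ground state, reduce to a pointwise lower bound for a weighted potential near the origin, and globalize by a cut-off argument. Your weighted ground-state representation with $P=R_\Omega^\eps-|x|^\eps$ is just a reorganization of the substitution $v=\phi u$ used in the Appendix, and your error term $K_\eps|x|^{\eps-2}$, $K_\eps=\tfrac{N^2}{4}+\eps\tfrac{N-2}{2}$, is exactly what a verbatim repetition of Step 1 with $\phi=x_N|x|^{-N/2}$ produces once $|x|^2$ is replaced by $|x|^\eps$. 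What you add is genuinely needed for $0<\eps<2$: there the error is not dominated by $\into v^2$, and in the case G1 the bare ground state has excess $P=0$, so the "same steps" remark of the paper is too glib. Your recursion $a_k=-a_{k-1}\frac{K_\eps-k(k-1)\eps^2}{k^2\eps^2 R_\Omega^\eps}$ checks out (the coefficient of $a_k$ at order $r^{k\eps-2}$ is $-k^2\eps^2R_\Omega^\eps\neq0$, so the singular powers cancel one by one), and the IMS localization preserves the constant $R_\Omega^\eps$. An alternative that stays closer to the paper's own steps is to take the log-corrected supersolution $x_N|x|^{-N/2}\log^{1/2}(2R_\Omega/|x|)$: its excess potential $P\sim\tfrac14|x|^{-2}\log^{-2}(2R_\Omega/|x|)$ enters the analogue of \eqref{eqp4} with the factor $(|x|^\eps-R_\Omega^\eps)\le-(R_\Omega^\eps-r_0^\eps)$ and absorbs $|x|^{\eps-2}$ near the origin, since $|x|^{\eps}\log^{2}(2R_\Omega/|x|)\to0$; this avoids the recursion and again does not touch the constant $R_\Omega^\eps$.

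Your case C2 is the weak point. The claim that the Allegretto--Piepenbrink supersolution coming from \eqref{oeq44} "can be taken comparable to $x_N|x|^{-N/2}$ near the origin" cannot be right: in case C2 (the case G2 of the Appendix) $x_N$ takes negative values in $\Omega$ arbitrarily close to $0$, so no positive function is comparable to $x_N|x|^{-N/2}$ there. Moreover, an abstract positive supersolution gives no control of $x\cdot\n\psi/\psi$, which your computation requires in order to handle the term $\eps|x|^{\eps-2}\,x\cdot\n\psi/\psi$ produced by $\n P\cdot\n\psi$. The repair is the paper's explicit choice $\phi_2=d(x)e^{(1-N)d(x)}|x|^{-N/2}\big|\log(1/|x|)\big|^{1/2}$, $d$ being the distance to $\Gamma$: under \eqref{cond} one has $x\cdot\n d\ge0$ near the origin, so $x\cdot\n\phi_2/\phi_2$ is bounded below there, and its excess $P>0$ is of the size of the logarithmic Hardy weight, so the $|x|^{\eps-2}$ error is absorbed by the term $(|x|^\eps-R_\Omega^\eps)P\,v^2$ exactly as above, with no polynomial correction needed; your localization step then closes case C2 as well.
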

The proof of Theorem \ref{tuu8} is omitted since it applies the same
steps in the proof of Theorem \ref{tu8}.\\

Finally in  Section \ref{6sec} we will consider the
Schr\"{o}dinger-like process
\begin{equation}\label{eq124S}\left\{\begin{array}{ll}
  iu_{t}-\Delta u-\lambda \frac{u}{|x|^2}=0, & (t,x)\in (0, T)\times\Omega, \\
  u(t,x)=h(t,x), & (t,x)\in (0,T)\times \Gamma_0, \\
  u(t,x)=0, & (t,x)\in (0,T)\times (\Gamma\setminus \Gamma_0), \\
  u(0,x)=u_0(x), &  x\in \Omega, \\
\end{array}\right.
\end{equation}
where the singularity is located on the boundary, and we briefly
discuss the well-posedness and controllability properties. In
Section \ref{4sec} we treat with some open related problems.

The main results of this paper have been announced in a short
presentation in \cite{cristica2}.

\section{Pohozaev identity for $A_\la$}\label{secpoho}

In this Section we rigorously justify the Pohozaev-type identity
associated to $A_\la$. We will discuss in a detail manner the case
C1. The details of the case C2 are let to the reader. In the latter
case we only state the corresponding functional framework, see
Subsection \ref{2sec}.
\subsection{The case C1}\label{1subsec}
 Firstly, we introduce the functional
framework which is used throughout the paper and we discuss some of
its properties.

Assume $\Omega\subset \rr^N$, $N\geq 1$ is a smooth domain which
satisfies the case C1
and fix $\la \leq \la(N)$.
 Thanks to inequality \eqref{oeq3},   we
consider the Hardy functional
\begin{equation}\label{normB}
B_{\la}[u]=\into \Big[|\n u|^2 -\la \frac{u^2}{|x|^2}\Big]dx,
\end{equation}
which is positive and finite for all $u\in C_{0}^{\infty}(\Omega)$.
For any $ \la \leq \la(N)$, $B_\la[u]$ induces  a Hilbert space
$H_\la$, defined by the completion of $C_0^\infty(\Omega)$ functions
in the norm
\begin{equation}\label{defnorm}
||u||_{H_\lambda}^{2}=B_\la[u],  \q u\in C_{0}^{\infty}(\Omega).
\end{equation}
 We point out that the space $H_\la$ was firstly introduced by
Vazquez and Zuazua \cite{vazzua} in the case of interior
singularity. As emphasize above, it may be extended to the case of
boundary singularity.  In the subcritical case $\lambda<\lambda(N)$,
it holds that
 $\hoi=H_\la$, according to the estimates
$$ \big(1-\lambda/\la(N)\big)
||u||_{\hoi} \leq ||u||_{H_\lambda}^{2}\leq ||u||_{\hoi}^{2}, \q
\forall \q u\in C_{0}^{\infty}(\Omega),$$ which ensure the
equivalence of the norms.

The critical space $H_{\la(N)}$  turns to be slightly larger than
$\hoi$. Remark that $B_{\la(N)}[u]$ is finite for any $u\in \hoi$,
but it makes sense as an improper integral approaching the singular
pole $x=0$ (see the right hand side of \eqref{density})
for more general distributions. 
 As happens in the case of interior singularity
 (see \cite{hiddenenergy}), in general the meaning of
$||u||_{H_{\la(N)}}$ does not coincide  with the improper integral
of $B_{\la(N)}[u]$. Following $\cite{hiddenenergy}$, we can
construct a counterexample even in the case when the singularity is
located on the border.
 Indeed,
we fix $\Omega:= \{  x\in \rr_{+}^{N} \ : \ |x'|^2+(x_N-1)^2\leq
1\}$ and we consider the distribution
$e_1=x_N|x|^{-N/2}J(z_{0,1}|x|)$ where $z_{0,1}$ is the first
positive zero of the Bessel function $J_0$. We observe that
$B_{\la(N)}[u]$ is finite
 as an improper integral approaching the origin.
  On the other hand,  computing we  remark that
$$||e_1-\phi||_{H_{\la(N)}}\geq C_0>0, \q \forall \phi \in C_{0}^{\infty}(\Omega),$$
for some positive universal  constant $C_0>0$.  This is in
contradiction with the definition of $H_{\la(N)}$ which allows the
existence of a sequence $\phi_n\in C_{0}^{\infty}(\Omega)$
converging to $e_1$ in $H_{\la(N)}$-norm ! Thererefore, the
assumption of considering the definition of the $H_{\la(N)}$-norm as
an improper integral of $B_{\la(N)}$  is false. In other words,
there are distributions $u\in H_{\la(N)}$ for which
\begin{equation}\label{density}
||u||_{H_{\la(N)}}^2\neq \lim_{\eps\rightarrow 0}\int_{|x|\geq
\eps}\Big[ |\n u|^2-\la(N)\frac{u^2}{|x|^2}\Big]dx.
\end{equation}
Next we propose an equivalent norm on $H_\la$, $\la\leq \la(N)$,
which overcomes the anomalous behavior in \eqref{density} and
describes perfectly the meaning of the  $H_\la$-norm.

\subsubsection{The meaning of $H_\la$-norm}
For reasonable considerations that will be precise in
\eqref{identity}, we introduce the functional
\begin{equation}\label{norm2SB}
B_{\la, 1}[u]=\int_\Omega \Big|\n u+ \frac{N}{2}
\frac{x}{|x|^2}u-\frac{e_N}{x_N}u\Big|^2dx+ (\la(N)-\la)\into
\frac{u^2}{|x|^2}dx.
\end{equation}
which is positive and finite for any $u\in C_{0}^{\infty}(\Omega)$
and $\la\leq \la(N)$. Here, we have denoted by $e_N$ the $N-$th
canonical vector of $\rr^N$.  Next, we observe that, for any
$\la\leq \la(N),$
\begin{equation}\label{identity}
B_\la[u]=B_{\la, 1}[u], \q \forall u\in C_{0}^{\infty}(\Omega).
\end{equation}
Besides, notice that both $B_{\la, 1}[u]$ and $B_\la[u]$ are norms
in $H_\la$ and they coincide on $C_{0}^{\infty}(\Omega)$. Due to
definition \eqref{defnorm} of $H_\la$, we conclude that
 the $H_\la$ could be define as the closure of
 $C_{0}^{\infty}(\Omega)$ in the norm
  induced by $B_{\la, 1}[u]$. Therefore, the $H_\la$-norm is characterized by the identification
\begin{equation}\label{newnormB}
||u||_{H_\la}^2=\lim_{\eps\rightarrow 0}B_{\la, 1}^{\eps}[u], \q
\forall u\in H_{\la}, \end{equation}
 where $\la\leq \la(N)$ and
\begin{equation*}
B_{\la, 1}^{\eps}[u]:=\int_{|x|\geq \eps} \Big|\n u+ \frac{N}{2}
\frac{x}{|x|^2}u-\frac{e_N}{x_N}u\Big|^2dx+
(\la(N)-\la)\int_{|x|\geq \eps} \frac{u^2}{|x|^2}dx, \q \forall u\in
H_{\la}.\end{equation*}

Next in the paper we will understand the meaning of the norm
$||\cdot||_{H_\la}$ as in formula \eqref{newnormB}.

\subsubsection{Main
results}\label{3sec} First of all, we note that standard elliptic
estimates do not apply for $A_\la $ to obtain enough regularity for
the normal derivative since the singularity $x=0$ is located on the
boundary.
 However, the following trace regularity result stated in Theorem \ref{trace}
 holds true. In what follows, $D(A_\la)$ stands for the domain of $A_\la$ defined
in \eqref{domeniu}.

Next,  we claim the main results of Section \ref{secpoho}.

\begin{Th}[Trace regularity]\label{trace}
Assume $\Omega\subset\rr^N$, $N\geq 1$, is a bounded smooth domain
satisfying the case C1.  Let us consider  $\la \leq \la(N)$ and
$u\in D(A_\la)$. Then
\begin{equation}\label{tracereg}
\Big(\frac{\p u }{\p \nu}\Big)^2|x|^2\in L^1(\Gamma),
\end{equation}
and moreover, there exists a positive constant $C=C(\Omega)>0$ such
that
\begin{equation}\label{ineqtrace}
\int_{\Gamma}\Big(\frac{\p u }{\p \nu}\Big)^2|x|^2 d\sigma \leq
C(||u||_{H_\la}^2+||A_\la u||_{L^2(\Omega)}^2), \q \forall \q u\in
D(A_\la).
\end{equation}
\end{Th}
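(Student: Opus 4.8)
The plan is to establish the trace regularity by a \emph{Rellich--Pohozaev type multiplier computation carried out away from the singularity}, combined with a careful analysis of the error terms generated near $x=0$. The key vector field is the radial one $m(x)=x$, the same multiplier that produces the Pohozaev identity \eqref{Pohozaevintro}. First I would fix $u\in D(A_\la)$ and work on the truncated domain $\Omega_\eps:=\Omega\cap\{|x|>\eps\}$, where $1/|x|^2$ is smooth and bounded and classical elliptic regularity applies, so $\frac{\p u}{\p\nu}$ is a bona fide $L^2$ function on $\Gamma\cap\{|x|>\eps\}$. On $\Omega_\eps$ I would multiply $A_\la u$ by $x\cdot\n u$ and integrate by parts. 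This produces, on the outer boundary $\Gamma\cap\{|x|>\eps\}$, the term $\tfrac12\int (x\cdot\nu)\big(\tfrac{\p u}{\p\nu}\big)^2$; on the inner sphere $S_\eps=\Omega\cap\{|x|=\eps\}$ a collection of terms carrying factors of $\eps$; and in the interior the bulk terms $-\tfrac{N-2}{2}B_{\la,\eps}[u]$ (after using \eqref{identity}/\eqref{newnormB}) plus $-\int_{\Omega_\eps}(x\cdot\n u)A_\la u$. Rearranging gives, for each $\eps>0$,
\begin{equation*}
\frac12\int_{\Gamma\cap\{|x|>\eps\}}(x\cdot\nu)\Big(\frac{\p u}{\p\nu}\Big)^2 d\sigma
= -\int_{\Omega_\eps}(x\cdot\n u)A_\la u\,dx-\frac{N-2}{2}B_{\la}^{\eps}[u]+\mathcal{E}_\eps,
\end{equation*}
with $\mathcal{E}_\eps$ the collection of surface integrals on $S_\eps$.

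Since C1 forces $x\cdot\nu=O(|x|^2)$ on $\Gamma$ (condition \eqref{cond}) but $x\cdot\nu$ need not be signed, the left side controls only $\int(x\cdot\nu)(\p u/\p\nu)^2$, not $\int|x|^2(\p u/\p\nu)^2$. To repair this I would add a correction multiplier of the form $\psi(x)\,\n u$ (or equivalently work with $x\cdot\n u+\tfrac{N-2}{2}u$ adjusted by a divergence-free rotational field) chosen so that the boundary identity becomes one for $\int_{\Gamma}(|x|^2+c\,x\cdot\nu)(\p u/\p\nu)^2$ with $|x|^2+c\,x\cdot\nu\ge \tfrac12|x|^2$ near the origin for suitable $c$ — this is possible precisely because the domain is smooth, so near $0$ the boundary is a graph over the tangent plane $\{x_N=0\}$ and $x\cdot\nu$ is genuinely quadratic. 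After that step the quantity on the left dominates $\tfrac14\int_{\Gamma\cap\{|x|>\eps\}}|x|^2(\p u/\p\nu)^2$, so it suffices to bound the right-hand side uniformly in $\eps$ and to extract a limit.

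The main obstacle is controlling the inner-sphere error $\mathcal{E}_\eps$ and showing it stays bounded (indeed tends to a finite limit) as $\eps\to0$. The dangerous term is the one of the shape $\eps\int_{S_\eps}|\n u|^2\,d\sigma$, together with mixed terms $\eps\int_{S_\eps}|\n u||u|/\eps\,d\sigma$ and $\int_{S_\eps}u^2/\eps\,d\sigma$; naively these need not vanish. Here I would use that $u\in D(A_\la)$ means $A_\la u\in L^2$, upgrade to the quantitative information that $\int_0^{r_0}\big(\int_{S_r}|\n u|^2\big)r\,dr<\infty$ and $\int_0^{r_0}\big(\int_{S_r}u^2/r^2\big)\tfrac{dr}{r}<\infty$ coming from $u\in H_\la$ and the improved Hardy inequality \eqref{oeq3} (the logarithmic weight is exactly what makes the radial integral of these spherical energies converge), and conclude that there is a sequence $\eps_n\to0$ along which $\eps_n\int_{S_{\eps_n}}|\n u|^2\to0$ and the companion terms vanish as well. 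This is the standard "good sequence" trick; the improved Hardy inequality in case C1 is what guarantees its existence. The finiteness of $\int_{\Omega}(x\cdot\n u)A_\la u$ follows from Cauchy--Schwarz once one knows $|x||\n u|\in L^2(\Omega)$, which in turn is Theorem \ref{tu8}. Passing to the limit along $\eps_n$ in the corrected identity then yields simultaneously \eqref{tracereg} (the left side is a monotone limit of nonnegative quantities, hence finite) and the estimate \eqref{ineqtrace}, since every term on the right is bounded by $C(\|u\|_{H_\la}^2+\|A_\la u\|_{L^2}^2)$ using $\|\,|x|\n u\,\|_{L^2}\le C(\|u\|_{H_\la}+\|u\|_{L^2})$ and, for C1, $\|u\|_{L^2}\le C\|u\|_{H_\la}$.
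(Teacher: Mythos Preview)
Your approach shares the Rellich--multiplier spirit of the paper's proof but differs in every technical choice, and one of those choices creates a genuine gap in the critical case $\la=\la(N)$.

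The paper does \emph{not} use the radial multiplier $x\cdot\n u$ for this theorem. Instead it multiplies $A_\la u=f$ by $|x|^2\,\vec q\cdot\n u\,\theta_\eps$, where $\vec q\in (C^2(\overline\Omega))^N$ is a smooth interior extension of the outward normal $\nu$ and $\theta_\eps$ is a cutoff vanishing on $B_\eps(0)$ (this is precisely Lemma~\ref{lem}). That single choice accomplishes three things at once: (i) since $\vec q=\nu$ on $\Gamma$ and $u=0$ there, the boundary weight is $|x|^2$ directly, so no ``correction multiplier'' is needed; (ii) the cutoff replaces your inner-sphere surface integrals $\mathcal E_\eps$ by volume integrals over the annulus $\{\eps<|x|<2\eps\}$, each carrying an extra factor of $|x|^2$; (iii) every term on the right-hand side of the resulting identity is then bounded \emph{uniformly in $\eps$} by $C(\|u\|_{H_\la}^2+\|f\|_{L^2}^2)$, using only $\int_\Omega |x|\,|\n u|^2\,dx\lesssim\|u\|_{H_\la}^2$ (a variant of Theorem~\ref{tu8}) and $\|u\|_{L^2}\lesssim\|u\|_{H_\la}$. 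Fatou's lemma then gives \eqref{tracereg} and \eqref{ineqtrace} simultaneously.

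Your route instead requires $\mathcal E_{\eps_n}\to 0$ along a good sequence. The dangerous inner-sphere contribution coming from the singular potential is $\dfrac{\la}{2\eps}\int_{S_\eps}u^2\,d\sigma$, and you justify its vanishing via the claim $\int_0^{r_0}\big(\int_{S_r}u^2/r^2\big)\frac{dr}{r}<\infty$, i.e.\ $u^2/|x|^3\in L^1(\Omega)$. This does \emph{not} follow from the improved Hardy inequality \eqref{oeq3}: the logarithmic correction gives only $\int u^2/(|x|^2\log^2(R_\Omega/|x|))\,dx<\infty$, and since $\int_0^{r_0}\frac{dr}{r\log^2(R_\Omega/r)}<\infty$ one cannot even deduce $\liminf_{\eps\to 0}\frac{1}{\eps}\int_{S_\eps}u^2\,d\sigma=0$. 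In the subcritical range $\la<\la(N)$ one has $u/|x|\in L^2$ and your good-sequence trick works, but at criticality the argument breaks down exactly where the result is most delicate. The paper's extra factor $|x|^2$ in the multiplier, together with the cutoff in place of domain truncation, is precisely what sidesteps this difficulty: the analogous annulus term becomes $O(\eps)\int_{\{\eps<|x|<2\eps\}}u^2/|x|^2\,dx$, which \emph{is} controlled by the logarithmic remainder in \eqref{oeq3}. Your ``correction multiplier'' step is also left vague; carrying it out amounts to choosing a field $m$ with $m\cdot\nu=|x|^2$ on $\Gamma$, which is exactly the paper's $|x|^2\vec q$.
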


Moreover, we obtain the following

\begin{Th}[Pohozaev identity]\label{poho}
Assume $\Omega\subset \rr^N$, $N\geq 1$, is a smooth bounded domain
satisfying the case C1 and let $\la\leq \la(N)$. 
If $u\in D(A_\la)$ we  claim that
\begin{align}\label{pohoidentity}
\frac{1}{2}\int_{\Gamma}(x\cdot \nu) \Big(\frac{\p u}{\p \nu}\Big)^2
d\sigma&= -\into A_\la u(x\cdot \n u)dx-
\frac{N-2}{2}||u||_{H_\la}^2,
\end{align}
\end{Th}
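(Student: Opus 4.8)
The plan is to establish \eqref{pohoidentity} as the rigorous version of the formal Rellich--Pohozaev computation, in which one multiplies $A_\la u$ by the dilation field $x\cdot\n u$ and integrates by parts over $\Omega$. Since $A_\la$ is singular only at the origin, I would carry this out on the truncated domain $\Omega_\eps:=\{x\in\Omega\ :\ |x|>\eps\}$ and then let $\eps\to0$. On $\Omega_\eps$ the potential $\la/|x|^2$ is smooth and bounded and $A_\la u\in L^2(\Omega)$, so standard interior and boundary elliptic regularity give $u\in H^2$ locally in $\ov\Omega\cap\{|x|>\eps/2\}$; moreover $u$ vanishes on $\Gamma\setminus\{0\}$ in the trace sense, because the $H_\la$--norm controls the $H^1$--norm away from the origin, so the $C_0^\infty(\Omega)$--approximations of $u$ converge in $H^1_{\mathrm{loc}}(\ov\Omega\setminus\{0\})$. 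Hence all integrations by parts on $\Omega_\eps$ are licit, the classical normal derivative on $\Gamma_\eps:=\Gamma\cap\{|x|>\eps\}$ coincides with the trace furnished by Theorem \ref{trace}, and $\p\Omega_\eps=\Gamma_\eps\cup S_\eps$ with $S_\eps:=\Omega\cap\{|x|=\eps\}$ the interior spherical cap.

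Applying the classical Rellich identity for $-\D$ on $\Omega_\eps$, using $\n u=(\p_\nu u)\nu$ on $\Gamma_\eps$, and treating the zeroth order term by the elementary identity $\int_{\Omega_\eps}\tfrac{u}{|x|^2}(x\cdot\n u)\,dx=-\tfrac{N-2}{2}\int_{\Omega_\eps}\tfrac{u^2}{|x|^2}\,dx-\tfrac{1}{2\eps}\int_{S_\eps}u^2\,d\sigma$, one is led to
\begin{align*}
\int_{\Omega_\eps}A_\la u\,(x\cdot\n u)\,dx = {}&-\frac{N-2}{2}\int_{\Omega_\eps}\Big[|\n u|^2-\la\frac{u^2}{|x|^2}\Big]dx\\
&{}-\frac12\int_{\Gamma_\eps}(x\cdot\nu)\Big(\frac{\p u}{\p\nu}\Big)^2 d\sigma+\mathcal{R}_\eps ,
\end{align*}
where $\mathcal{R}_\eps$ is a fixed linear combination, with $\eps$--independent coefficients, of the spherical--cap quantities $\eps\int_{S_\eps}|\n u|^2\,d\sigma$, $\eps\int_{S_\eps}(\p_\nu u)^2\,d\sigma$ and $\eps^{-1}\int_{S_\eps}u^2\,d\sigma$. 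At this point I would use the completion--of--the--square computation that yields \eqref{identity}, but performed on $\Omega_\eps$ rather than on $C_0^\infty(\Omega)$: it rewrites $\int_{\Omega_\eps}[|\n u|^2-\la u^2/|x|^2]\,dx$ as $B_{\la,1}^\eps[u]$ plus one more cap term $\tfrac{N-2}{2\eps}\int_{S_\eps}u^2\,d\sigma$, which is absorbed into $\mathcal{R}_\eps$. The displayed identity then becomes $\int_{\Omega_\eps}A_\la u\,(x\cdot\n u)\,dx=-\tfrac{N-2}{2}B_{\la,1}^\eps[u]-\tfrac12\int_{\Gamma_\eps}(x\cdot\nu)(\p_\nu u)^2\,d\sigma+\mathcal{R}_\eps$.

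It remains to pass to the limit $\eps\to0$. The left--hand side converges to $\into A_\la u\,(x\cdot\n u)\,dx$ by dominated convergence, since $A_\la u\in L^2(\Omega)$ and $x\cdot\n u\in L^2(\Omega)$; the latter is read off the functional framework via the pointwise identity $x\cdot\n u=x\cdot\big(\n u+\tfrac N2\tfrac{x}{|x|^2}u-\tfrac{e_N}{x_N}u\big)-\tfrac{N-2}{2}u$, both summands lying in $L^2(\Omega)$ by \eqref{norm2SB}, \eqref{identity} and the embedding $H_\la\hookrightarrow L^2(\Omega)$ (which follows from \eqref{oeq3}, as $|x|^2\log^2(R_\Omega/|x|)$ is bounded on $\ov\Omega$). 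The term $\int_{\Gamma_\eps}(x\cdot\nu)(\p_\nu u)^2\,d\sigma$ converges to the corresponding integral over $\Gamma$ by dominated convergence on $\Gamma$: by \eqref{cond} one has $|(x\cdot\nu)(\p_\nu u)^2|\le C|x|^2(\p_\nu u)^2$, and $|x|^2(\p_\nu u)^2\in L^1(\Gamma)$ by Theorem \ref{trace}. Finally $B_{\la,1}^\eps[u]\to\|u\|_{H_\la}^2$ by the very definition \eqref{newnormB} of the $H_\la$--norm; this is exactly the step where the refined norm is indispensable, since in the critical case $\la=\la(N)$ the naive improper integral of $B_{\la(N)}[u]$ need not equal $\|u\|_{H_{\la(N)}}^2$, cf. \eqref{density}.

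The heart of the argument — and the step I expect to be the main obstacle — is to show $\mathcal{R}_\eps\to0$. When $\la<\la(N)$ one has $H_\la=\hoi$, hence $\n u\in L^2$ and $u/|x|\in L^2$ near the origin, and a coarea/Fubini argument suffices: e.g. $\int_{\Omega\cap B_1}|\n u|^2dx=\int_0^1\big(\int_{S_r}|\n u|^2 d\sigma_r\big)dr<\infty$ forces $\liminf_{r\to0} r\int_{S_r}|\n u|^2 d\sigma_r=0$, and likewise for the two other cap quantities, so along a common sequence $\eps_n\downarrow0$ one gets $\mathcal{R}_{\eps_n}\to0$. The delicate situation is the critical one $\la=\la(N)$, where $\n u$ need not be square--integrable near the origin; there the three cap quantities in $\mathcal{R}_\eps$ must first be recombined and re--expressed through the shifted gradient $\n u+\tfrac N2\tfrac{x}{|x|^2}u-\tfrac{e_N}{x_N}u$, whose square is integrable on all of $\Omega$ by \eqref{norm2SB}--\eqref{identity}, and one additionally exploits the absolute--continuity information contained in $u\in D(A_\la)$; the same coarea argument then produces $\eps_n\downarrow0$ with $\mathcal{R}_{\eps_n}\to0$. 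Passing to the limit along $\eps_n$ in the reduced identity and rearranging yields \eqref{pohoidentity}, and since the resulting equality does not involve $\eps$ no uniqueness of the limit is needed. An equivalent route is to prove \eqref{pohoidentity} first for the $u\in D(A_\la)$ vanishing near $0$ (a core of $D(A_\la)$, obtained via logarithmic cut--offs as in the critical--Hardy literature) and then to extend to all of $D(A_\la)$ using that both sides are continuous for the graph norm of $A_\la$, the boundary term by the bound \eqref{ineqtrace} together with \eqref{cond}, the right--hand side by $\|x\cdot\n u\|_{L^2(\Omega)}\le C\|u\|_{H_\la}$.
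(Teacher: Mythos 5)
Your overall strategy (truncate at $|x|=\eps$, run the Rellich computation on $\Omega\cap\{|x|>\eps\}$, and pass to the limit) is genuinely different from the paper's: the paper proves the subcritical case $\la<\la(N)$ with a cut-off near the origin (essentially your argument, and that part of your proposal is sound), but handles the critical case $\la=\la(N)$ by approximating in the \emph{parameter}, i.e.\ solving $A_{\la(N)-\eps}u_\eps=f$, using Lemma \ref{lemalimit} for strong $H_{\la(N)}$-convergence, Theorem \ref{tu8} to pass to the limit in $\into f(x\cdot\n u_\eps)dx$, and the maximum-principle bounds of Lemma \ref{unifbound} (plus a density argument in $f$) to pass to the limit in the boundary term. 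Your identification of $x\cdot\n u\in L^2(\Omega)$ through the shifted gradient, and your treatment of the $\Gamma_\eps$ term via Theorem \ref{trace} and dominated convergence, are both correct.

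The genuine gap is in the critical case, exactly at the step you flag as ``the main obstacle'': you do not show $\mathcal{R}_\eps\to0$, and the difficulty is not merely technical. Writing out $\mathcal{R}_\eps$, it contains the term $\tfrac{\la}{2\eps}\int_{S_\eps}u^2d\sigma$ from the potential and a further $-\tfrac{(N-2)^2}{4\eps}\int_{S_\eps}u^2d\sigma$ from converting $\int_{\Omega_\eps}[|\n u|^2-\la u^2/|x|^2]$ into $B_{\la,1}^\eps[u]$; after substituting the shifted gradient into the remaining cap integrals one is still left with a nonzero multiple of $\eps^{-1}\int_{S_\eps}u^2d\sigma$ plus cross terms. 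But $\eps^{-1}\int_{S_\eps}u^2d\sigma$ is precisely the quantity responsible for the anomaly \eqref{density}: for general $u\in H_{\la(N)}$ it does \emph{not} tend to zero, and no coarea/liminf argument can force it to, because the only available integrability is $\int_\Omega u^2/(|x|^2\log^2(R_\Omega/|x|))dx<\infty$ and the weight $1/(r\log^2(R_\Omega/r))$ is integrable at $r=0$, so finiteness of that integral yields no decay of $r^{-1}\int_{S_r}u^2d\sigma$ whatsoever. Hence $\mathcal{R}_{\eps}\to0$ can only come from an exact cancellation between this term and the cross terms, which requires precise asymptotics of $u$ near the origin for $u\in D(A_{\la(N)})$ — information your sketch does not produce (and which the paper obtains indirectly through Lemmas \ref{lemalimit} and \ref{unifbound}). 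Your alternative route has the same hole in a different place: that functions vanishing near the origin form a core of $D(A_{\la(N)})$ for the graph norm is asserted but not proved, and the usual logarithmic cut-offs $\chi_n$ fail here because $\|u\,\D\chi_n\|_{L^2}$ is controlled by $\int u^2/|x|^4$ over the transition region, which is not finite for critical $u$. So the subcritical half of your proof stands, but the critical case needs a substantially different ingredient.
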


The proofs of Theorems \ref{trace}, \ref{poho} are quite technical,
so we need to apply some preliminary lemmas which are  stated below.
The proofs of  Lemmas  \ref{lem}, \ref{unifbound} are postponed at
the end of Subsection \ref{1subsec} while Lemma \ref{lemalimit} is a
consequence of an abstract approximation lemma in a forthcoming work
\cite{acz}.

\begin{lema}\label{lem}
Supppose $u\in D(A_\la)$ and  denote $f=A_\la u\in L^2(\Omega)$. Let
us also consider  $\theta_\eps \in C_{0}^{\infty}(\Omega)$,
$\eps>0$, a family of cut-off functions such that
\begin{equation}\label{theaaprox}
\theta_\eps(x)=\theta_\eps(|x|)=\left\{\begin{array}{ll}
  0, & |x|\leq \eps \\
  1, & |x|\geq 2\eps. \\
\end{array}\right.
\end{equation}
Assume  $\vec{q}\in (C^2(\overline{\Omega}))^N$ is  a vector field
such that $\vec{q}=\nu$ on $\Gamma$, where $\nu$ denotes the outward
normal  to the boundary $\Gamma$ (such an election of $\vec{q}$ can
be always done in smooth domains, see \cite{lions1}, Lemma 3.1, page
29). Then we have the identity
\begin{align}\label{lastform}
\frac{1}{2}\int_{\Gamma} \Big(\frac{\p u}{\p
\nu}\Big)^2|x|^2\theta_\eps d\sigma&=-\into f(|x|^2 \vec{q}\cdot \n
u \theta_\eps)dx +2\into (x\cdot \n
u)(\vec{q}\cdot \n u)\theta_\eps dx\nonumber\\
&+\sum_{i, j=1}^{N} \into u_{x_i}u_{x_j} |x|^2q_{x_i}^j \theta_\eps
dx-\into |\n u|^2 (x\cdot \vec{q})\theta_\eps dx\nonumber\\
&-\frac{1}{2}\into \textrm{div}\vec{q}|x|^2\Big[|\n u|^2-\la
\frac{u^2}{|x|^2}\Big]\theta_\eps dx -\frac{1}{2} \into |x|^2
\vec{q} \cdot \n \theta_\eps \Big[|\n u|^2-\la
\frac{u^2}{|x|^2}\Big]
dx\nonumber\\
&+\into |x|^2(\vec{q}\cdot \n u)(\n u\cdot \n \theta_\eps) dx.
\end{align}
\end{lema}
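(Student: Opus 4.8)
\textbf{Proof proposal for Lemma \ref{lem}.}

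The plan is to prove the identity \eqref{lastform} by the classical Rellich--Pohozaev multiplier computation, but carried out in the truncated region where the cut-off $\theta_\eps$ vanishes near the origin, so that every integration by parts is legitimate (on the support of $\theta_\eps$ the potential $1/|x|^2$ is bounded and $u$ enjoys standard interior $H^2_{\mathrm{loc}}$ elliptic regularity away from $x=0$; near $\Gamma$ the boundary is smooth and $u=0$ there). First I would write $f=A_\la u = -\Delta u - \la u/|x|^2$ and test the equation against the multiplier $m(u):=|x|^2\,\vec q\cdot\n u\,\theta_\eps$, which is an admissible test function since it is compactly supported away from the singularity and smooth up to $\Gamma$. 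This gives
\begin{equation*}
-\into f\,|x|^2\,\vec q\cdot\n u\,\theta_\eps\,dx=\into(-\Delta u)\,|x|^2\,\vec q\cdot\n u\,\theta_\eps\,dx-\la\into\frac{u^2}{|x|^2}\cdot\frac{1}{u}\cdot\text{(terms)}\,dx,
\end{equation*}
so really I would keep the two pieces $\into(-\Delta u)\,m(u)\,dx$ and $-\la\into u|x|^2\vec q\cdot\n u\,\theta_\eps/|x|^2\,dx=-\la\into u\,\vec q\cdot\n u\,\theta_\eps\,dx$ separate and integrate each by parts.

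For the Laplacian term I would integrate by parts once to move one derivative off $-\Delta u$, producing the boundary integral $-\int_\Gamma \frac{\p u}{\p\nu}\,|x|^2\,\vec q\cdot\n u\,\theta_\eps\,d\sigma$ plus $\into \n u\cdot\n\big(|x|^2\vec q\cdot\n u\,\theta_\eps\big)\,dx$. On $\Gamma$ we have $u=0$, hence $\n u=\frac{\p u}{\p\nu}\,\nu$ there and $\vec q=\nu$, so the boundary integral collapses to $-\int_\Gamma(\frac{\p u}{\p\nu})^2|x|^2\theta_\eps\,d\sigma$; combined with the $\tfrac12$ already on the left of \eqref{lastform} this is where the factor $\tfrac12$ eventually comes from, once the interior gradient terms are symmetrized. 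The interior term I would expand by the product rule into the four pieces $2(x\cdot\n u)(\vec q\cdot\n u)\theta_\eps$ (from $\n|x|^2=2x$), $\sum_{i,j}u_{x_i}u_{x_j}|x|^2 q^j_{x_i}\theta_\eps$ (from differentiating $\vec q$), the ``Hessian'' term $\sum_{i,j}|x|^2 q^j u_{x_j x_i}u_{x_i}\theta_\eps$, and $|x|^2(\vec q\cdot\n u)(\n u\cdot\n\theta_\eps)$ (from differentiating $\theta_\eps$). The Hessian term is the delicate one: I would rewrite $\sum_{i,j}|x|^2 q^j u_{x_jx_i}u_{x_i}\theta_\eps=\tfrac12|x|^2\vec q\cdot\n(|\n u|^2)\theta_\eps$ and integrate by parts \emph{again} — here using that $|\n u|^2\in L^1$ on the support of $\theta_\eps$ and, on $\Gamma$, that the tangential derivatives of $u$ vanish so $|\n u|^2=(\frac{\p u}{\p\nu})^2$ — to obtain $-\tfrac12\into\mathrm{div}(|x|^2\vec q\,\theta_\eps)|\n u|^2\,dx+\tfrac12\int_\Gamma|x|^2(\vec q\cdot\nu)(\frac{\p u}{\p\nu})^2\theta_\eps\,d\sigma$, and then expand $\mathrm{div}(|x|^2\vec q\,\theta_\eps)=2(x\cdot\vec q)\theta_\eps+|x|^2\mathrm{div}\vec q\,\theta_\eps+|x|^2\vec q\cdot\n\theta_\eps$.

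For the zeroth-order (potential) term I would similarly write $-\la\into u\,\vec q\cdot\n u\,\theta_\eps\,dx=-\tfrac{\la}{2}\into\vec q\cdot\n(u^2)\theta_\eps\,dx$ and integrate by parts: the boundary contribution vanishes since $u=0$ on $\Gamma$, leaving $\tfrac{\la}{2}\into u^2\,\mathrm{div}(\vec q\,\theta_\eps)\,dx=\tfrac{\la}{2}\into u^2(\mathrm{div}\vec q\,\theta_\eps+\vec q\cdot\n\theta_\eps)\,dx$; I would multiply and divide by $|x|^2$ to match the form $-\tfrac12\into\mathrm{div}\vec q\,|x|^2\big(-\la u^2/|x|^2\big)\theta_\eps\,dx$ and the analogous $\n\theta_\eps$-term, so that these combine cleanly with the $|\n u|^2$ terms from the Laplacian computation into the two brackets $[\,|\n u|^2-\la u^2/|x|^2\,]$ appearing in \eqref{lastform}. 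Collecting all boundary integrals gives $\tfrac12\int_\Gamma(\frac{\p u}{\p\nu})^2|x|^2\theta_\eps\,d\sigma$ after cancellation, and collecting all interior integrals gives exactly the seven terms on the right of \eqref{lastform}. The main obstacle is purely bookkeeping: verifying that every integration by parts above is justified on $\mathrm{supp}\,\theta_\eps$ — which requires only that $u\in H^1$ with $\Delta u\in L^2$ locally away from $0$ (hence $u\in H^2_{\mathrm{loc}}$ there by elliptic regularity, since the potential term $\la u/|x|^2$ is in $L^2_{\mathrm{loc}}$ off the origin), together with the smoothness of $\Gamma$ and the homogeneous Dirichlet condition — and then tracking the coefficients so that the factor $\tfrac12$ and the signs land precisely as stated. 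No passage to the limit $\eps\to0$ is needed at this stage; that is deferred to the proofs of Theorems \ref{trace} and \ref{poho}.
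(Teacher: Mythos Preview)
Your proposal is correct and follows essentially the same route as the paper's proof: test the equation against the multiplier $|x|^2\,\vec q\cdot\nabla u\,\theta_\eps$, invoke $H^2_{\mathrm{loc}}$ elliptic regularity away from the origin to justify the integrations by parts, simplify the boundary terms via $\nabla u=(\partial u/\partial\nu)\nu$ and $\vec q=\nu$ on $\Gamma$, integrate the Hessian piece $\tfrac12|x|^2\vec q\cdot\nabla(|\nabla u|^2)\theta_\eps$ by parts once more, and handle the potential term by writing $u\,\vec q\cdot\nabla u=\tfrac12\vec q\cdot\nabla(u^2)$. The paper carries out exactly these steps in the same order; your only addition is the explicit remark that no limit $\eps\to 0$ is taken here, which is accurate.
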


\begin{lema}\label{lemalimit}
Assume $f\in L^2(\Omega)$ and $\Omega\subset \rr^N$ verifying  the
case C1. 
 Let $\eps>0$ be small enough.  We
consider the following approximation problem
\begin{equation}\label{rel2}
\left \{\begin{array}{ll}
  A_{\la(N)-\eps}u_\eps=f,  & x\in \Omega \\
  u_\eps=0, & x\in \p \Omega.
\end{array}\right.
\end{equation}
Then $$u_\eps \rightarrow u \q \textrm{ strongly in } H_{\la(N)}, \q
\textrm{ as } \eps \rightarrow 0.$$ where $u$ verifies the limit
problem
       $$-\D u -\la(N) \frac{u}{|x|^2}=f, \textrm{ in } \mathcal{D}'(\Omega).$$
       Moreover, \begin{equation}
       \eps \into \frac{u_{\eps}^2}{|x|^2}dx \rightarrow 0, \textrm{ as
       } \eps \rightarrow 0.
       \end{equation}
\end{lema}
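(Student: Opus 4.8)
The plan is to treat \eqref{rel2} via the Lax--Milgram theorem in the subcritical spaces and then pass to the limit using the monotone/variational structure of the problem. First I would note that for each fixed small $\eps>0$ we have $\la(N)-\eps<\la(N)$, so by the subcritical Hardy inequality the bilinear form $a_\eps(u,v)=\into \n u\cdot \n v\,dx-(\la(N)-\eps)\into uv/|x|^2\,dx$ is coercive on $\hoi=H_{\la(N)-\eps}$; hence \eqref{rel2} has a unique weak solution $u_\eps\in\hoi$ satisfying $B_{\la(N)-\eps}[u_\eps]=\into f u_\eps\,dx$. The key a~priori bound comes from combining this energy identity with the \emph{sharp} Hardy inequality \eqref{oeq3} (valid in case C1): writing $B_{\la(N)-\eps}[u_\eps]=B_{\la(N)}[u_\eps]+\eps\into u_\eps^2/|x|^2\,dx$ and using \eqref{oeq3} to control $\into u_\eps^2/|x|^2\,dx$ and $\into u_\eps^2/(|x|^2\log^2(R_\Omega/|x|))\,dx$ by $B_{\la(N)}[u_\eps]$, I obtain that $\|u_\eps\|_{H_{\la(N)}}^2$ and $\eps\into u_\eps^2/|x|^2\,dx$ are both bounded by $C\|f\|_{L^2(\Omega)}\|u_\eps\|_{L^2(\Omega)}$, and then by Cauchy--Schwarz (using $\|u_\eps\|_{L^2}\le C\|u_\eps\|_{H_{\la(N)}}$) uniformly bounded by $C\|f\|_{L^2(\Omega)}^2$.

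Next I would extract a weakly convergent subsequence $u_\eps\rightharpoonup u$ in $H_{\la(N)}$. Passing to the limit in the weak formulation $\into \n u_\eps\cdot\n\varphi\,dx-\la(N)\into u_\eps\varphi/|x|^2\,dx-\eps\into u_\eps\varphi/|x|^2\,dx=\into f\varphi\,dx$ against test functions $\varphi\in C_0^\infty(\Omega)$ is straightforward: the first two terms converge by weak convergence in $H_{\la(N)}$, and the $\eps$-term vanishes since $\eps\into u_\eps\varphi/|x|^2\,dx\le \eps^{1/2}\big(\eps\into u_\eps^2/|x|^2\big)^{1/2}\|\varphi/|x|\|_{L^2}\to 0$ using the bound just obtained and $\varphi/|x|\in L^2$. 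This gives $-\D u-\la(N)u/|x|^2=f$ in $\mathcal D'(\Omega)$, identifying the limit; by uniqueness of this limit problem in $H_{\la(N)}$ the whole family converges.

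The main obstacle is upgrading weak to \emph{strong} convergence in $H_{\la(N)}$, because the critical norm is exactly at the threshold of the Hardy inequality and the embedding $H_{\la(N)}\hookrightarrow L^2(\Omega)$ must be used compactly. The plan is: first establish $u_\eps\to u$ strongly in $L^2(\Omega)$ (this is the point where I would invoke the abstract approximation lemma from \cite{acz}, or alternatively the compactness of $H_{\la(N)}\hookrightarrow L^2$ that follows from \eqref{oeq3} giving a logarithmic gain); then compute
\begin{align*}
\|u_\eps-u\|_{H_{\la(N)}}^2 &= B_{\la(N)}[u_\eps]-2B_{\la(N)}(u_\eps,u)+B_{\la(N)}[u]\\
&= \Big(\into f u_\eps\,dx-\eps\into \frac{u_\eps^2}{|x|^2}\,dx\Big)-2B_{\la(N)}(u_\eps,u)+B_{\la(N)}[u].
\end{align*}
As $\eps\to0$: $\into f u_\eps\,dx\to\into f u\,dx=B_{\la(N)}[u]$ by $L^2$-convergence, $\eps\into u_\eps^2/|x|^2\to 0$ by the a~priori bound, and $B_{\la(N)}(u_\eps,u)\to B_{\la(N)}[u]$ by weak convergence, so the right-hand side tends to $B_{\la(N)}[u]-0-2B_{\la(N)}[u]+B_{\la(N)}[u]=0$, yielding strong convergence. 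Finally, the assertion $\eps\into u_\eps^2/|x|^2\,dx\to 0$ is precisely the a~priori estimate established above, so no extra work is needed; I would only double-check that in case C2 the same scheme runs with \eqref{oeq44} in place of \eqref{oeq3}, the constant $C_2$ being harmless since it multiplies the $L^2$-norm which is already controlled.
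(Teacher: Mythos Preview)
Your argument is essentially correct and, in fact, goes further than the paper itself: the paper does \emph{not} prove Lemma~\ref{lemalimit} but simply states that it is ``a consequence of an abstract approximation lemma in a forthcoming work \cite{acz}''. Your direct variational proof via Lax--Milgram, uniform energy bounds from \eqref{oeq3}, weak compactness in $H_{\la(N)}$, and the norm-convergence trick is self-contained and does exactly what is needed.

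There is one small logical slip worth fixing. In your strong-convergence step you invoke ``$\eps\into u_\eps^2/|x|^2\,dx\to 0$ by the a~priori bound'', but the a~priori estimate you established only gives \emph{uniform boundedness} of this quantity, not convergence to zero. The clean way to close the argument is to keep that term on the left: from the energy identity you have
\[
\|u_\eps-u\|_{H_{\la(N)}}^2 + \eps\into \frac{u_\eps^2}{|x|^2}\,dx \;=\; \into f u_\eps\,dx - 2B_{\la(N)}(u_\eps,u) + B_{\la(N)}[u],
\]
and the right-hand side tends to $\into f u\,dx - 2\|u\|_{H_{\la(N)}}^2 + \|u\|_{H_{\la(N)}}^2 = 0$ by weak convergence in $H_{\la(N)}$ and strong convergence in $L^2(\Omega)$ (the latter following from the compact embedding $H_{\la(N)}\hookrightarrow L^2(\Omega)$ implied by the logarithmic remainder in \eqref{oeq3}). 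Since both terms on the left are nonnegative, each tends to zero separately, which gives strong convergence and the second assertion of the lemma in one stroke. With this minor rearrangement your proof is complete.
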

\begin{lema}\label{unifbound}
Assume $\Omega$ fulfills the case C1 and let $\la\leq \la(N)$.  Let
$f\in C^\infty(\Omega)$. Moreover, we assume that $u_\la$ solves the
problem
\begin{equation}\label{approxB}
\left\{
\begin{array}{ll}
  A_{\la}u_\la=f, & x\in \Omega,  \\
  u_\la\in H_\la.  &  \\
\end{array}
\right.
\end{equation}
Then  $u_\la$ satisfies the following upper bounds: there exists
$r_0< R_\Omega$ small enough and there exist constants $C_1, C_2>0$,
independent of $\la$, such that
\begin{equation}\label{uppbound1}
|u_\la(x)|\leq C_1 x_N |x|^{-N/2+\sqrt{\la(N)-\la} }\Big|\log
\frac{1}{|x|}\Big|^{1/2}, \q \textrm{ a.e.  } x\in \Omega_{r_0},
\end{equation}
\begin{equation}\label{uppbound2}
|\n u_\la(x)|\leq C_2  |x|^{-N/2+\sqrt{\la(N)-\la}} \Big|\log
\frac{1}{|x|}\Big|^{1/2}, \q \textrm{ a.e. } x\in \Omega_{r_0},
\end{equation}
where $\Omega_{r_0}:=\Omega\cap B_{r_0}(0)$.
\end{lema}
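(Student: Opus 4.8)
The plan is a barrier/comparison argument localized near the origin. First, since the potential $\la/|x|^2$ is a smooth bounded coefficient on $\Omega\setminus B_\rho$ for every $\rho>0$, standard interior and boundary elliptic estimates (bootstrapped from $f\in C^\infty$) give that $u_\la$ and $\n u_\la$ are bounded on $\overline\Omega\setminus B_{r_0/2}$; combined with a uniform a priori bound $\|u_\la\|_{L^2(\Omega)}\le C\|f\|_{L^2(\Omega)}$ — which holds with $C$ independent of $\la\le\la(N)$ because $\|u_\la\|_{H_\la}^2=\int_\Omega f u_\la$ and $B_\la\ge B_{\la(N)}$ with $B_{\la(N)}$ coercive on $L^2(\Omega)$ for bounded $\Omega$ (a consequence of the improved Hardy inequality \eqref{oeq3}) — one gets, for a fixed small $r_0<R_\Omega$,
\[
|u_\la(x)|+|\n u_\la(x)|\le C_0\quad\text{on}\quad\overline\Omega\setminus B_{r_0/2},\qquad |u_\la(x)|\le C_0\,x_N\quad\text{on}\quad\Omega\cap\p B_{r_0},
\]
with $C_0$ independent of $\la$ (the second estimate uses $u_\la=0$ on $\Gamma$ and $\Omega\subset\rr_+^N$). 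Hence it suffices to prove \eqref{uppbound1}--\eqref{uppbound2} inside $\Omega_{r_0}$.

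Set $\alpha=\alpha(\la):=\sqrt{\la(N)-\la}\ge 0$ and take as barrier
\[
W(x):=x_N\,|x|^{-N/2+\alpha}\Big(\log\tfrac{1}{|x|}\Big)^{1/2}=|x|^{s}\,Y_1(\omega)\,g(|x|),\qquad s=1-\tfrac N2+\alpha,
\]
where $Y_1(\omega)=x_N/|x|$ is the first Dirichlet eigenfunction of $-\D_\omega$ on the half-sphere (eigenvalue $N-1$) and $g(r)=(\log(1/r))^{1/2}$. Since $s$ is the larger root of the indicial equation $s(s+N-2)-(N-1)+\la=0$, the leading term of $A_\la(|x|^sY_1g)$ cancels and a direct computation gives
\[
A_\la W=-|x|^{s-2}Y_1\big(r^2g''+(2\alpha+1)\,r g'\big)=|x|^{s-2}\,Y_1\Big(\alpha\,g^{-1}+\tfrac14\,g^{-3}\Big)\ \ge\ 0\quad\text{in }\Omega_{r_0}\setminus\{0\}.
\]
The surviving right-hand side is of lower order but strictly positive ($Y_1>0$ in $\rr_+^N$, $g>0$ for $|x|<1$), and --- this is why the logarithm is inserted --- the term $\tfrac14 g^{-3}=\tfrac14(\log(1/|x|))^{-3/2}$ does not degenerate as $\la\uparrow\la(N)$. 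One then checks that $M_1W$ is a supersolution of $A_\la\,\cdot=|f|$ in $\Omega_{r_0}$ once $M_1$ is large (adding, if needed, a bounded correction such as $M_2(r_0^2-|x|^2)$ to absorb $f$, and shrinking $r_0$ so that these lower-order contributions are dominated by the surplus above), that $M_1W\ge|u_\la|$ on $\Omega\cap\p B_{r_0}$ by the estimate $|u_\la|\le C_0x_N$ from the first step, and that $M_1W=0=u_\la$ on $\Gamma\cap B_{r_0}$.

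The comparison is then carried out on the half-annuli $\Omega_{r_0}\setminus\overline{B_\delta}$, $\delta\to0$, where $u_\la$ and $W$ are classical and $A_\la$ is uniformly elliptic with smooth coefficients: on the outer sphere and on $\Gamma$ the inequality $M_1W\ge|u_\la|$ was just arranged, while on $\p B_\delta\cap\Omega$ it follows, after possibly enlarging $M_1$, from a preliminary bound $|u_\la|=o(|x|^{s})$ as $|x|\to0$ --- obtained by playing the rescaled estimate $\|u_\la\|_{L^\infty(\{|x|\sim\delta\})}\lesssim\delta^{-N/2}\|u_\la\|_{L^2(\{|x|\sim\delta\})}+\delta^2\|f\|_{L^\infty}$ against the decay $\int_{\{|x|\le\delta\}}u_\la^2/|x|^2\to0$ forced by $u_\la\in H_\la$ (equivalently, by a one–dimensional Hardy inequality in the radial variable applied to the hemisphere-harmonic expansion of $u_\la$). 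Letting $\delta\to0$ yields $|u_\la|\le M_1W$ in $\Omega_{r_0}$ with $M_1$ independent of $\la$, which is \eqref{uppbound1}. (Alternatively, replacing $W$ by the slightly smaller $x_N|x|^{-N/2+\alpha}(\log(1/|x|))^{1/2-\eta}$, which does lie in $H_\la$, one may run the comparison directly in $H_\la(\Omega_{r_0})$ by testing $A_\la(M_1W-u_\la)\ge0$ against $(M_1W-u_\la)^-\in H_\la$ and using that $B_\la$ is a genuine norm.)

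Finally, \eqref{uppbound2} follows from \eqref{uppbound1} by rescaled elliptic estimates. For $x\in\Omega_{r_0/2}$ set $d=|x|$; use the interior Schauder (or $W^{2,p}$) estimate on $B_{d/8}(x)$ when $\mathrm{dist}(x,\Gamma)\ge d/8$, and the boundary estimate with zero Dirichlet data on a half-ball $B_{cd}(x)\cap\Omega$ otherwise --- here the $O(|x|^2)$–flatness of case C1 makes the rescaled boundary pieces converge to a hyperplane, so the boundary constants are uniform. In the variable $y=x+dz$ the operator $d^2A_\la$ has smooth coefficients bounded independently of $d$ (as $|y|\asymp d$ on these balls), so, using \eqref{uppbound1} and $x_N\le|x|$,
\[
|\n u_\la(x)|\ \lesssim\ d^{-1}\|u_\la\|_{L^\infty(B_{cd}(x)\cap\overline\Omega)}+d\,\|f\|_{L^\infty}\ \lesssim\ d^{-1}\cdot d\cdot d^{-N/2+\alpha}\big(\log\tfrac1d\big)^{1/2}\ =\ d^{-N/2+\alpha}\big(\log\tfrac1d\big)^{1/2}.
\]
The critical value $\la=\la(N)$ is covered by running the whole argument on the subcritical approximations $u_\eps$ of Lemma \ref{lemalimit} ($\la_\eps=\la(N)-\eps<\la(N)$) and letting $\eps\to0$, using $u_\eps\to u$ strongly in $H_{\la(N)}$, hence a.e. along a subsequence; since every constant above is independent of $\la$, the bounds pass to the limit. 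The main obstacle is exactly this $\la$–uniformity combined with the very weak regularity of $u_\la$ at the origin: the exact homogeneous solution $x_N|x|^{-N/2}$ fails to belong to $H_{\la(N)}$ (cf. \eqref{density}), which forces both the logarithmic correction in the barrier and the annular --- rather than global --- form of the comparison, and the a priori bound on the inner spheres $\p B_\delta$ must be extracted from the Hardy/energy structure alone.
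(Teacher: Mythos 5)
Your barrier is exactly the paper's ($\phi_\la=x_N|x|^{-N/2+\sqrt{\la(N)-\la}}|\log(1/|x|)|^{1/2}$, with the same surplus $x_N|x|^{-N/2+\alpha-2}\big[\alpha(\log\frac1{|x|})^{-1/2}+\frac14(\log\frac1{|x|})^{-3/2}\big]$), and your derivation of \eqref{uppbound2} from \eqref{uppbound1} by rescaled elliptic/interpolation estimates at radius $\sim|x|$ is the paper's Step 3 (Brezis--Marcus--Shafrir). The divergence, and the gap, is in how you run the comparison. Your annular scheme on $\Omega_{r_0}\setminus\overline{B_\delta}$ needs $|u_\la|\le M_1W$ on $\p B_\delta\cap\Omega$ with $M_1$ independent of $\la$ and of $\delta$, and the input you offer --- $\int_{\{|x|\le\delta\}}u_\la^2/|x|^2\to0$, hence $|u_\la|=o(|x|^{s})$ --- is not enough for two reasons. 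Quantitatively, matching the barrier on the inner sphere requires a rate, roughly $\big(\int_{\{|x|\sim\delta\}}u_\la^2/|x|^2\big)^{1/2}\lesssim\delta^{\sqrt{\la(N)-\la}}(\log\frac1\delta)^{1/2}$: a polynomial rate in the subcritical case, and even at criticality a bound by a single power of the logarithm, whereas the energy structure alone (including the improved inequality \eqref{oeq3}, which on a dyadic annulus only yields $\lesssim\log^2(1/\delta)$) gives no rate at all; in effect the inner-sphere estimate you need is equivalent to the lemma you are proving. Qualitatively, $W\propto x_N$ degenerates on the portion of $\p B_\delta$ close to $\Gamma$, so even a uniform bound $|u_\la|=o(\delta^{s})$ on that sphere would not give the pointwise domination $|u_\la|\le M_1W$ there; ``enlarging $M_1$'' cannot repair this, and the boundary-gradient refinement you would need at scale $\delta$ runs into the same missing rate.

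There is a second gap in the absorption of $f$: the proposed correction $M_2(r_0^2-|x|^2)$ satisfies $A_\la(r_0^2-|x|^2)=2N-\la(r_0^2-|x|^2)/|x|^2$, which tends to $-\infty$ at the origin for $\la>0$, so it is not a supersolution; and near $\Gamma\cap B_{r_0}$ away from the origin the surplus of $W$ vanishes like $x_N$ while $|f|$ is of order one, so the assertion that $M_1W$ (plus such a correction) is a supersolution of $A_\la\,\cdot=|f|$ in $\Omega_{r_0}$ for $M_1$ large is not justified. The paper sidesteps both difficulties simultaneously: it never excises the origin. It first introduces $U_\la$ with $A_\la U_\la=|f|$, $U_\la\in H_\la$, and proves $|u_\la|\le U_\la$ by multiplying $A_\la(U_\la\pm u_\la)=|f|\pm f\ge0$ by the negative part $(U_\la\pm u_\la)^{-}\in H_\la$ and invoking the non-attainability of the optimal Hardy constant (a reverse Hardy inequality forces the negative part to vanish) --- no inner sphere, no rate. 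It then compares $U_\la$ with $C\phi_\la$, $C\ge\|f\|_{L^\infty}/C_1$, using the lower bound $A_\la\phi_\la\ge C_1>0$, so the datum $f$ is absorbed through $U_\la$ rather than through an additive correction. Your parenthetical alternative (energy comparison in $H_\la$ with the slightly smaller barrier, testing against the negative part) is precisely the right mechanism and is close to the paper's Step 1, but to make it work you should apply it to $U_\la\pm u_\la$ first and only afterwards compare $U_\la$ with the barrier, instead of asking $M_1W$ itself to dominate $|f|$.
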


\noindent {\bf Notation:} In order to facilitate the computations,
in the sequel, we will write $``\gtrsim "$ and $`` \lesssim "$
instead of $``\geq C"$ respectively $``\leq C"$ when we refer to
universal constants $C$.

\subsubsection{Proofs of Theorems \ref{trace}, \ref{poho}}
\vspace{.2cm}

\begin{proof}[Proof of Theorem \ref{trace}]

Following the  proof of Theorem \ref{tu8}, we are able to show that,
\begin{equation}
\into |x| |\n u|^2 dx\lesssim ||u||_{H_\la}^2, \q \forall \ u\in
H_\la.
\end{equation}
From the above estimate and Cauchy-Schwartz inequality applied to
identity \eqref{lastform} in Lemma \ref{lem} we reach to
\begin{equation}\label{ineqtraceeps}
\int_{\Gamma}\Big(\frac{\p u }{\p \nu}\Big)^2|x|^2 \theta_\eps
d\sigma \lesssim ||u||_{H_\la}^2+||f||_{L^2(\Omega)}^2, \q \forall
\q u\in D(A_\la), \q \forall \q  \eps>0.
\end{equation}
Thanks to Fatou Lemma we finish the proof of Theorem \ref{trace}.

\end{proof}

\begin{proof}[Proof of Theorem \ref{poho}]

We split the proof in two main steps.\\

\noindent \textsl{ Step 1. The subcritical case}\\

 Note that
$H_\la=\hoi$. Let $u\in D(A_\la)$ and put
$f:=A_\la u \in L^2(\Omega).$
By standard elliptic estimates  we note that $u\in
H^2(\Omega\setminus B_\eps(0))$, for any $\eps>0$ small enough.
Moreover, the normal derivative $\p u/\p \nu\in
L_{loc}^2(\p\Omega\setminus \{0\})$. We multiply $A_\la u$ by
$x\cdot \n u\theta _\eps$,  where $\theta_\eps$, $\eps>0,$ was
defined in \eqref{theaaprox}. After integration we get
\begin{align}\label{poho1}
\frac{1}{2}\int_{\Gamma} (x\cdot \nu)\Big(\frac{\p u}{\p \nu}\Big)^2
\theta_\eps d\sigma&= -\into f(x\cdot \n u) \theta_\eps dx-
\frac{N-2}{2}\into \Big[|\n u|^2-\la
\frac{u^2}{|x|^2}\Big]\theta_\eps dx\nonumber\\
&-\frac{1}{2}\into \Big[|\n u|^2-\la \frac{u^2}{|x|^2}\Big]x\cdot \n
\theta_\eps dx +\into (x\cdot \n u)(\n u \cdot \n \theta_\eps)dx.
\end{align}
 Combining the Dominated Convergence Theorem (DCT) with Theorem \ref{trace},
 the left hand side of \eqref{poho1} converges i.e.
$$\int_{\Gamma}(x\cdot \nu) \Big(\frac{\p u}{\p \nu}\Big)^2\theta_\eps d \sigma
\rightarrow \int_{\Gamma} (x\cdot \nu) \Big(\frac{\p u}{\p
\nu}\Big)^2 d \sigma, \q \textrm{ as }\eps\rightarrow 0.$$ In the
right hand side, we can directly pass to the limit term by term to
obtain the identity \eqref{pohoidentity} as follows. Firstly, since
$x\cdot \n u\in L^2(\Omega)$ we have that
\begin{align*}\left\{
\begin{array}{ll}
&|f(x\cdot \n u)\theta_\eps|\leq |f||x\cdot \n u|\in L^1(\Omega),\\
&\theta_\eps \rightarrow 1, \textrm { a.e. as} \eps \rightarrow 0,
\end{array}\right.
\end{align*}
and by DCT we obtain
$$\into f(x\cdot
\n u) \theta_\eps dx\rightarrow
\into f(x\cdot \n u)dx, \q \textrm{ as } \eps \rightarrow 0.$$ 
Besides, from Hardy inequality and DCT we have
$$\into |\n u|^2 \theta_\eps dx \rightarrow \into |\n u|^2 dx, \q \into \frac{u^2}{|x|^2}\theta_\eps dx\rightarrow
\into \frac{u^2}{|x|^2}dx, \q \textrm{ as } \eps \rightarrow 0.$$
Using the fact that $|\n \theta_\eps|= O(1/\eps)$ it follows that
$$\Big|\into |\n u|^2 x\cdot \n \theta_
\eps dx\Big|\lesssim \int_{B_{2\eps}\setminus B_\eps}|\n u|^2
dx\rightarrow 0,$$
$$\Big|\into \frac{u^2}{|x|^2} x\cdot \n \theta_\eps dx\Big|\lesssim \int_{B_{2\eps}\setminus B_\eps}\frac{u^2}{|x|^2} dx\rightarrow
0,$$
$$\Big|\into (x\cdot \n u)(\n u\cdot \n \theta_\eps)
dx\Big|\lesssim \int_{B_{2\eps}\setminus B_\eps}|\n u|^2
dx\rightarrow 0,
$$
as $\eps \rightarrow 0$. With these we conclude the solvability of
Theorem \ref{poho} in the
subcritical case $\la< \la(N)$.\\

\noindent \textsl{Step 2. The critical case $\la=\la(N)$}\\

 As before, let
us consider $u\in D(A_{\la(N)})$ and $f:=A_{\la(N)}u\in
L^2(\Omega)$. Our purpose is to show the validity of Theorem
\ref{poho} for such $u$.

We proceed by approximations with subcritical values. More
precisely, for $\eps>0$ small enough we consider the problem
\begin{equation}\label{approx}
\left\{
\begin{array}{ll}
  A_{\la(N)-\eps}u_\eps=f, & x\in \Omega,  \\
  u_\eps\in \hoi.  &  \\
\end{array}
\right.
\end{equation}
Applying  Lemma \ref{lemalimit} we obtain
\begin{equation}\label{limit}
u_\eps \rightarrow u \textrm{ in } H_{\la(N)}, \q  \eps \into
\frac{u_\eps^2}{|x|^2}dx\q \textrm{ as }   \eps\rightarrow 0,
\end{equation}%
where $u$ solves the limit problem. According to the Pohozaev
identity applied to $u_\eps$  we reach to
\begin{align}\label{pohoidentitysub}
\frac{1}{2}\int_{\Gamma} (x\cdot \nu) \Big(\frac{\p u_\eps}{\p
\nu}\Big)^2 d\sigma&= -\into f(x\cdot \n u_\eps)dx-
\frac{N-2}{2}\Big(||u_\eps||_{H_{\la(N)}}^2+\eps\into
\frac{u_\eps^2}{|x|^2}dx\Big).
\end{align}
Due to Theorem \ref{tu8}, the fact that $u_\eps \rightarrow u$ in
$H_{\la(N)}$ implies
  $$x\cdot \n u_\eps \rightarrow x\cdot \n u \ \textrm{ in } L^2(\Omega),
   \ \textrm{ as }
  \eps \rightarrow 0. $$
Therefore,  the right hand side in \eqref{pohoidentitysub} converges
to
$$-\into f(x\cdot \n u)dx -\frac{N-2}{2}||u||_{H_{\la(N)}}^2:=H(u),$$
and therefore
$$\textrm{ there exists } \lim_{\eps \rightarrow 0} \frac{1}{2}\int_{\Gamma}
(x\cdot \nu)\Big(\frac{\p u_\eps}{\p \nu}\Big)^2 d\sigma=H(u).$$ On
the other hand, by standard elliptic regularity one can show that
$$\frac{\p u_\eps}{\p \nu}\rightarrow  \frac{\p
u}{\p \nu}  \textrm{ in } L_{\textrm{loc}}^2(\Gamma\setminus \{0\})
\textrm{ and } \frac{\p u_\eps}{\p \nu}\rightarrow  \frac{\p u}{\p
\nu} \textrm{ a.e.  on }\Gamma.$$ In the sequel, we
discuss two different situations for the geometry of $\Omega$.\\

 \noindent {\bf
Case 1.} Assume $\Omega$ is flat in a neighborhood of zero (i.e.
$x\cdot \nu=0$). Then it easily to note that
$$\lim_{\eps\rightarrow 0}\int_{\Gamma}
(x\cdot \nu)\Big(\frac{\p u_\eps}{\p \nu}\Big)^2
d\sigma=\int_{\Gamma} (x\cdot \nu)\Big(\frac{\p u}{\p \nu}\Big)^2
d\sigma.$$ In consequence, $u$ satisfies the Pohozaev identity, by
passing to the limit in
\eqref{pohoidentitysub}.\\

\noindent  {\bf Case 2.}  We assume  $\Omega$ is not necessary  flat
at
origin. We distinguish two cases when discussing the smoothness of $f$.\\ 

\noindent \textit{The case $f\in C^\infty(\Omega)$.}\\
%
Next we apply Lemma \ref{unifbound} for $u_\eps$ the solution of
problem \eqref{approx}.
 and we obtain
$$\Big|(x\cdot \nu)\Big(\frac{\p u_\eps}{\p \nu}\Big)^2\Big|\leq
\Big(\frac{\p u_\eps}{\p \nu}\Big)^2|x|^2\leq g, \textrm{ a.e. on
}\Gamma,$$ where $g=|x|^{2-N}\Big|\log\frac{1}{|x|}\Big|\in
L^1(\Gamma)$. Applying DCT we conclude

$$\lim_{\eps\rightarrow 0}\int_{\Gamma}
(x\cdot \nu)\Big(\frac{\p u_\eps}{\p \nu}\Big)^2
d\sigma=\int_{\Gamma} (x\cdot \nu)\Big(\frac{\p u}{\p \nu}\Big)^2
d\sigma.$$

\noindent \textit{The case $f\in L^2(\Omega)$.}\\

 We consider
$\{f_k\}_{k\geq 1}\in C^\infty(\Omega)$ such that $f_k\rightarrow f$
in $L^2(\Omega)$, as $k\rightarrow \infty$.

Let us call $u_k$ the solution of $A_{\l(N)}u_k=f_k$, for all $k\geq
1$. From the previous case,   $u_k$ satisfies
\begin{align}\label{pohoidentityB}
\frac{1}{2}\int_{\Gamma} (x\cdot \nu)\Big(\frac{\p u_k}{\p
\nu}\Big)^2 d\sigma&= -\into f_k (x\cdot \n u_k)dx-
\frac{N-2}{2}||u_k||_{H_\la}^2.
\end{align}

 We know that $f_k$ is a Cauchy sequence in $L^2(\Omega)$, and
due to
$$||u_k-u_l||_{H_\la(N)}\lesssim  ||f_k-f_l||_{L^2(\Omega)}
\rightarrow 0, \textrm{ as } k, l\rightarrow \infty,$$ we deduce
that $\{u_k\}_{k\geq 1}$ is Cauchy in $H_{\la(N)}$. Hence $u_k
\rightarrow u$ in $H_{\la(N)}$ and
$$x\cdot \n u_k \rightarrow x\cdot \n u \textrm{ in } L^2(\Omega).$$
As a consequence we can pass to the limit in the right hand side of
\eqref{pohoidentityB}. In order to finish the proof, next we also
show we can also pass to the limit in the left hand side. Indeed, in
view of  Theorem \ref{trace} we have
$$\int_{\Gamma} \Big(\frac{\p (u_k-u_l)}{ \p \nu} \Big)^2 |x|^2
d\sigma \lesssim ||u_k-u_l||_{H_\la}^2+||f_k-f_l||_{L^2(\Omega)}.
$$
Therefore $g_k:=\frac{\p u_k}{\p \nu} |x|$ is a Cauchy sequence in
$L^2(\Gamma)$ and $g_k\rightarrow g:=\frac{\p u}{\p \nu} |x|$ in
$L^2(\Gamma)$, as $k$ goes to infinity. This suffices to say that
$$\lim_{k\rightarrow \infty}\int_{\Gamma}(x\cdot \nu)\Big(\frac{\p u_k}{\p \nu}\Big)^2
 d\sigma =\int_{\Gamma}(x\cdot \nu)\Big(\frac{\p u}{\p
\nu}\Big)^2 d\sigma.$$ Therefore we conclude the proof of Theorem
\ref{poho}.
\end{proof}

\subsubsection{Proofs of useful lemmas}
\vspace{0.2cm}
\begin{proof}[Proof of Lemma \ref{lem}]
By standard elliptic estimates, we remark that $u\in
H_{\textrm{loc}}^2(\Omega\setminus\{0\})$. Thanks to that, after
multiplying $f$ by $|x| \vec{q}\cdot \n u
  \theta_\eps$  we are allowed to integrate by parts on $\Omega$.
Firstly, we obtain
\begin{align*}
\into \D u (|x|^2\vec{q}\cdot \n u
\theta_\eps)dx&=\int_{\Gamma}\frac{\p u}{\p \nu} (|x|^2\vec{q}\cdot
\n u \theta_\eps)d\sigma-\into \n u \cdot \n (|x|^2\vec{q}\cdot \n u
\theta_\eps)dx.
\end{align*}
Let us now compute the boundary term above. Since $u$ vanishes on
$\Gamma$ it follows that
\begin{equation} \n u=\frac{\p u}{\p
\nu}\nu, \q \textrm{ on } \Gamma, \end{equation}
 and moreover, $\vec{q}=\nu$ on $\Gamma$.
Thanks to these we obtain
$$\int_{\Gamma} \frac{\p u}{\p \nu} (|x|^2\vec{q}\cdot \n
u \theta_\eps)d\sigma=\int_{\Gamma} \Big(\frac{\p u}{\p
\nu}\Big)^2|x|^2\theta_\eps d\sigma.$$ Therefore,
\begin{align*}
\into \D u (|x|^2\vec{q}\cdot \n u \theta_\eps)dx
&=\int_{\Gamma} \Big(\frac{\p u}{\p \nu}\Big)^2|x|^2\theta_\eps
d\sigma- \into \n u\cdot \n (|x|^2\vec{q}\cdot \n
u)\theta_\eps\nonumber\\
&-\into |x|^2 (\vec{q}\cdot \n u)(\n u\cdot \n \theta_\eps) dx.
\end{align*}
Let us compute the second term in the integration above. Doing
various iterations we obtain
\begin{align}\label{secondterm}
\into \n u\cdot \n (|x|^2\vec{q}\cdot \n u)\theta_\eps&=
2\into (x\cdot \n u)(\vec{q}\cdot \n u)\theta_\eps dx +\sum_{i,
j=1}^{N} \into u_{x_i}u_{x_j} |x|^2q_{x_i}^j \theta_\eps dx\nonumber\\
&+ \frac{1}{2}\sum_{i, j=1}^{N} \into|x|^2q^j (u_{x_i}^{2})_{x_j}
\theta_\eps d\sigma
\end{align}
For the last term in the  integration above we get
\begin{align}\label{bound}
\frac{1}{2}\sum_{i, j=1}^{N} \into |x|^2q^j (u_{x_i}^{2})_{x_j}
\theta_\eps d\sigma
&= \frac{1}{2}\int_{\Gamma} \Big(\frac{\p u}{\p
\nu}\Big)^2|x|^2\theta_\eps d\sigma -\into |\n u|^2
(x\cdot \vec{q})\theta_\eps dx\nonumber\\
&-\frac{1}{2}\into \textrm{div}\vec{q}|x|^2|\n u|^2 \theta_\eps dx
-\frac{1}{2} \into |x|^2|\n u|^2 \vec{q} \cdot \n \theta_\eps dx.
\end{align}
According to  \eqref{secondterm} and \eqref{bound} we obtain
\begin{align}\label{formLaplace}
\into \D u (|x|^2\vec{q}\cdot \n u
\theta_\eps)dx&=\frac{1}{2}\int_{\Gamma} \Big(\frac{\p u}{\p
\nu}\Big)^2|x|^2\theta_\eps d\sigma-2\into (x\cdot \n
u)(\vec{q}\cdot \n u)\theta_\eps dx\nonumber\\\
&-\sum_{i, j=1}^{N} \into u_{x_i}u_{x_j} |x|q_{x_i}^j \theta_\eps
dx+\into |\n u|^2
(x\cdot \vec{q})\theta_\eps dx\nonumber\\
&+\frac{1}{2}\into \textrm{div}\vec{q}|x|^2|\n u|^2 \theta_\eps dx
+\frac{1}{2} \into |x|^2|\n u|^2 \vec{q} \cdot \n \theta_\eps
dx\nonumber\\
&-\into |x|^2(\vec{q}\cdot \n u)(\n u\cdot \n \theta_\eps) dx.
\end{align}
On the other hand, it follows that
\begin{align}\label{formsingular}
\into \frac{u}{|x|^2}|x|^2\vec{q}\cdot \n u \theta_\eps dx
&= -\frac{1}{2} \into \textrm{div} \vec{q} u^2 \theta_\eps
dx-\frac{1}{2} \into \vec{q} \cdot \n \theta_\eps u^2 dx.
\end{align}
From  \eqref{formLaplace} and \eqref{formsingular} we finally obtain
the identity of Lemma \ref{lem}.
\end{proof}

\begin{proof}[Proof of Lemma \ref{unifbound}]
For any $\la\leq \la(N)$ we fix $\phi_\la=x_N
|x|^{-N/2+\sqrt{\la(N)-\la} }\Big|\log \frac{1}{|x|}\Big|^{1/2}$.
Let us also consider the problem
\begin{equation}\label{approx2}
\left\{
\begin{array}{ll}
  A_{\la}U_\la=|f|, & x\in \Omega,  \\
  U_\la\in H_\la.  &  \\
\end{array}
\right.
\end{equation}
The proof follows several steps.\\

\noindent \textit{{Step 1}}.
 Firstly let us check the validity
of the Maximum Principle:
 \begin{equation}\label{MP}
 |u_\la(x)|\leq U_{\la}(x)\q \textrm{a.e. in }\Omega.
 \end{equation}
 Indeed, from the equations satisfied by $U_\la$, $u_\la$ we obtain
\begin{equation}\label{difference}
-\D (U_\la\pm u_\la)-\la \frac{(U_\la\pm u_\la)}{|x|^2}=|f|\pm f\geq
0, \q \forall \ x\in \Omega.
\end{equation}
Multiplying \eqref{difference} by the negative part $(U_\la\pm
u_\la)^{-}$ we get the reverse Hardy inequality
\begin{equation}\label{contrhardy}
\into \Big[ |\n (U_\la\pm u_\la)^{-}|^2-\la \frac{[(U_\la\pm
u_\la)^{-}]^2}{|x|^2} \Big] dx \leq 0.
\end{equation}
From the non-attainability of the Hardy constant we necessary must
have $(U_\la\pm u_\la)^{-}\equiv 0$ in $\Omega$. Therefore,
$U_\la\pm u_\la\geq 0$ in $\Omega$, fact which concludes
\eqref{MP}.\\

\noindent \textit{{Step 2}}.
 Next, we remark that there exists a positive constant
$C>0$, independent of $\la$ such that
$$-\D \phi_\la-\la\frac{\phi_\la}{|x|^2}\geq C_1, \q \forall x\in \Omega.$$
Therefore, for $C\geq ||f||_{L^\infty}/{C_1}$ we get
\begin{equation}\label{comparison}
\left\{\begin{array}{ll}
  -\D(C\phi_\la-U_\la)-\la\frac{(C\phi_\la-U_\la)}{|x|^2}\geq 0, &
  \forall x\in\Omega,  \\
  C\phi_\la-U_\la\geq 0, & x\in \Gamma. \\
\end{array}\right.
\end{equation}
Therefore, applying the Maximum Principle we obtain
\begin{equation}\label{upperbound}
U_\la\leq C \phi_\la, \q \forall x\in \Omega,\q  \la\leq \la(N),
\end{equation}
 and the proof \eqref{uppbound1} is finished.\\

\noindent \textit{{Step 3}}.  For the estimate \eqref{uppbound2} we
use a remark by Brezis-Marcus-Shafrir \cite{marcus2} as follows.

Fix $x\in \Omega_{r_0/2}$ and put $r=|x|/2$. We define then
$\tilde{u}_\la(y)=u_\la(x+ry)$ where $y\in B_1(0)$. By direct
computations we obtain
\begin{align}\label{relation}
\D {\tilde{u}_\la(y)}&=r^2 \D u_\la(x+r y)= r^2 \Big(-f -\la
\frac{u_\la(x+ry)}{|x+ry|^2}\Big)\nonumber\\
&= -r^2f-\la\frac{|x|^2}{4|x+ry|^2}\tilde{u}_\la(y).
\end{align}

On the other hand, we remark that $$\frac{4}{9}\leq
\frac{|x|^2}{|x+ry|^2}\leq 4, \q \forall \q y\in B_1(0). $$
 By elliptic estimates it is easy to see
 that $\tilde{u}_\la\in
C^1(B_1(0))$.  Applying the interpolation inequality (see Evans
\cite{MR1625845}),  we get that
\begin{align}\label{interp}
|\n \tilde{u}_\la(0)|&\lesssim
||\tilde{u}_\la||_{L^\infty(B_1(0))}+||\D
\tilde{u}_\la||_{L^\infty(B_1(0))} \nonumber\\
&\lesssim
||\tilde{u}_\la||_{L^\infty(B_1(0))}+||f||_{L^\infty(\Omega)}
\end{align}
Writing $\n \tilde{u}_\la$ in terms of $\n u_\la$ we obtain
\begin{equation}\label{gradbound}
|\n u_\la(x)|\lesssim
\frac{1}{|x|}(||\tilde{u}_\la||_{L^\infty(B_1(0))}+||f||_{L^\infty})
\end{equation}
In addition, from \eqref{upperbound} we have
\begin{align}\label{inft}
||\tilde{u}_\la||_{L^\infty(B_1(0))}&=||u_\la(x+ry)||_{L^\infty(B_1(0))}\nonumber\\
& \lesssim  \sup_{y\in B_1(0)}\Big\{(x_N+ry_N)|x+r
y|^{-N/2+\sqrt{\la(N)-\la}}\Big|\log\frac{1}{|x+ry|}\Big|^{1/2}\Big\}\nonumber\\
&\lesssim  x_N|x|^{-N/2+\sqrt{\la(N)-\la}}\Big|\log
\frac{1}{|x|}\Big|^{1/2}+|x|^{-(N-2)/2+\sqrt{\la(N)-\la}}\Big|\log
\frac{1}{|x|}\Big|^{1/2}\nonumber\\
&\lesssim  |x|^{-(N-2)/2+\sqrt{\la(N)-\la}}\Big|\log
\frac{1}{|x|}\Big|^{1/2} ,
\end{align}
which is verified for all $ x\in \Omega_{r_0}$,  $y\in B_1(0).$ From
\eqref{gradbound} and \eqref{inft} we obtain the estimate
\eqref{uppbound2} which yield the proof of Lemma \ref{unifbound}.
\end{proof}

\subsection{Applications to semi-linear equations}\label{8sec}

Pohozaev-type identities apply mostly  to show non-existence results
 for elliptic problems. In what follow we emphasize a direct
application to a non-linear elliptic equation with boundary singular
potential. To fix the ideas, let us  assume $\la< \la(N)$ and
consider $\Omega\subset \rr^N$, $N\geq 1$,  a  domain  satisfying
the case C1. Next
$$\alpha_{\star}:=\frac{N+2}{N-2}$$ stands for  the critical Sobolev exponent.

Next we claim
\begin{Th}\label{th1} Let us consider the problem
\begin{equation}\label{nonexist}
\left\{ \begin{array}{ll}
  -\D u-\frac{\lambda}{|x|^2}u=|u|^{\alpha-1} u,   & x\in \Omega,  \\
  u=0, & x\in \Gamma. \\
\end{array}\right.
\end{equation}
\begin{enumerate}[1.]
\item\label{a1} Assume $\la\leq \la(N)$.   If $1<\alpha< \alpha_\star$
 the problem \eqref{nonexist} has non trivial solutions in $H_\la$.
 Moreover, if $1<\alpha< \frac{N}{N-2}$  the problem \eqref{nonexist} has
non trivial solutions in $D(A_\la)$.
\item\label{a2} (non-existence). Assume  $\la\leq \la(N)$ and let $\Omega$
be a smooth star-shaped domain (i.e. $x\cdot \nu\geq 0$, for all
$x\in \Gamma$).  If $\alpha \geq \alpha_\star$ the problem does not
have non trivial solutions in $ D(A_{\la})$.
\end{enumerate}

\end{Th}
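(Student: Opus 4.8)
The plan is to obtain part~\ref{a1} by a constrained minimization followed by an elliptic bootstrap, and part~\ref{a2} as a direct consequence of the Pohozaev identity of Theorem~\ref{poho}.

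\emph{Existence.} For $1<\alpha<\alpha_\star$ I would minimize
\[
S_\la:=\inf\Big\{\|u\|_{H_\la}^2 \ : \ u\in H_\la,\ \into |u|^{\alpha+1}dx=1\Big\}.
\]
The crucial fact is that $\alpha+1<\tfrac{2N}{N-2}$ makes the embedding $H_\la\hookrightarrow L^{\alpha+1}(\Omega)$ compact: for $\la<\la(N)$ this is Rellich's theorem, since then $H_\la=\hoi$ with equivalent norms; for $\la=\la(N)$ one upgrades the improved Hardy inequality \eqref{oeq3} to a Hardy--Sobolev inequality, which still yields compactness in the subcritical range. Hence $S_\la>0$ is attained at some $u_0$, which we may take $\geq0$ after replacing it by $|u_0|$. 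The Lagrange multiplier rule gives $A_\la u_0=S_\la|u_0|^{\alpha-1}u_0$ weakly in $H_\la$, so $u:=S_\la^{1/(\alpha-1)}u_0$ is a nontrivial solution of \eqref{nonexist} in $H_\la$. If moreover $\alpha<\tfrac{N}{N-2}$, then $u\in H_\la\hookrightarrow L^{q}(\Omega)$ for every $q<\tfrac{2N}{N-2}$, and since $2\alpha<\tfrac{2N}{N-2}$ we get $|u|^{\alpha-1}u\in L^2(\Omega)$, i.e. $A_\la u\in L^2(\Omega)$, which is precisely $u\in D(A_\la)$.

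\emph{Non-existence.} Let $u\in D(A_\la)$ solve \eqref{nonexist}. Then $A_\la u=|u|^{\alpha-1}u\in L^2(\Omega)$, so $u\in L^{2\alpha}(\Omega)$, and testing the equation against $u$ yields the energy identity $\|u\|_{H_\la}^2=\into|u|^{\alpha+1}dx$. Substituting $A_\la u=|u|^{\alpha-1}u$ into the Pohozaev identity \eqref{pohoidentity} and using
\[
\into |u|^{\alpha-1}u\,(x\cdot\n u)\,dx=\frac{1}{\alpha+1}\into x\cdot\n\big(|u|^{\alpha+1}\big)\,dx=-\frac{N}{\alpha+1}\into|u|^{\alpha+1}dx
\]
(the boundary term dropping since $u|_\Gamma=0$) gives
\[
\frac12\int_{\Gamma}(x\cdot\nu)\Big(\frac{\p u}{\p\nu}\Big)^2d\sigma=\Big(\frac{N}{\alpha+1}-\frac{N-2}{2}\Big)\into|u|^{\alpha+1}dx=\frac{N+2-(N-2)\alpha}{2(\alpha+1)}\into|u|^{\alpha+1}dx.
\]
Since $\Omega$ is star-shaped the left-hand side is $\geq0$, while $\alpha\geq\alpha_\star$ makes the coefficient on the right $\leq0$; hence both sides vanish. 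For $\alpha>\alpha_\star$ this forces $\into|u|^{\alpha+1}dx=0$, i.e. $u\equiv0$. In the borderline case $\alpha=\alpha_\star$ the identity only yields $\int_\Gamma(x\cdot\nu)(\p u/\p\nu)^2 d\sigma=0$, hence $\p u/\p\nu=0$ on the relatively open, non-empty set $\{x\in\Gamma:\ x\cdot\nu>0\}$ (non-empty since $\int_\Gamma x\cdot\nu\,d\sigma=N|\Omega|>0$); together with $u|_\Gamma=0$, unique continuation for the equation, which is regular on $\overline\Omega\setminus\{0\}$, again forces $u\equiv0$.

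I expect the main obstacle to be the rigorous justification, for genuinely irregular data $u\in D(A_\la)$, of the energy identity and of the integration by parts used in the non-existence part, especially in the critical regime $\la=\la(N)$: there $H_{\la(N)}$ is strictly larger than $\hoi$, $\n u$ need not be globally $L^2$, and $\|u\|_{H_\la}$ is only the regularized limit \eqref{newnormB}, so both steps should be performed through the cut-offs $\theta_\eps$ exactly as in the proof of Theorem~\ref{poho}, controlling the contribution of the pole with Lemma~\ref{unifbound} and its $L^2$-datum variant. A secondary point is the precise unique continuation statement invoked when $\alpha=\alpha_\star$.
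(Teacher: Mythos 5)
Your proposal is correct and follows essentially the same route as the paper: constrained minimization with the compact embedding $H_\la\hookrightarrow L^{\alpha+1}$ for existence, the bootstrap $|u|^{\alpha-1}u\in L^2$ for membership in $D(A_\la)$, and the Pohozaev identity combined with the computation of $\into|u|^{\alpha-1}u\,(x\cdot\n u)\,dx$ and the energy identity for non-existence. The only place where the paper is more explicit is the borderline case $\alpha=\alpha_\star$: there one extends $u$ by zero across a compact piece $\Gamma'\subset\{x\cdot\nu>0\}$ away from the origin and invokes the strong unique continuation theorem of Jerison--Kenig for $-\D\tilde u=V\tilde u$ with $V\in L^{\omega}$, $\omega>N/2$, which is the precise statement your sketch implicitly relies on.
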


\noindent {\bf Proof of Theorem \ref{th1}}
\begin{proof}[Proof of \ref{a1}]
 The existence of non trivial solutions for \eqref{nonexist}
reduces to study the minimization problem
$$I=\inf_{u\in H_\la, u\neq 0}
\frac{||u||_{H_\la}^2}{||u||_{L^{\alpha+1}(\Omega)}^{\alpha+1}}.$$
Without losing the generality,  we may consider the normalization
\begin{equation}\label{normaliz}
I=\inf_{||u||_{L^{\alpha+1}(\Omega)}=1} J(u),
\end{equation}
where $J:H_\la\rightarrow \rr$, $J(u)=||u||_{H_\la}^{2}$ and we
address the question of attainability of $I$ in \eqref{normaliz}.\\

\noindent 
 We note that $J$ is
continuous, convex, coercive in $H_\la$. Let $\{u_n\}_n$ be a
minimizing sequence of $I$, i.e.,
$$J(u_n)\searrow I, \q ||u_n||_{L^{\alpha+1}(\Omega)}=1.$$
By the coercivity of $J$ we  have
$$||u_n||_{H_\la}\leq C, \q \forall n,$$
Moreover,  the embedding $H_\la \hookrightarrow
L^{\alpha+1}(\Omega)$ is compact for any $\alpha <\alpha_\star$ (it
can be deduced combining Theorem \ref{tuu8} and  Sobolev
inequality). Therefore,
\begin{equation}\left\{
\begin{array}{ll}
  u_n \rightharpoonup u & \textrm{weakly in } H_\la, \\
  u_n \rightarrow u & \textrm{ strongly in } L^{\alpha+1}(\Omega). \\
\end{array}\right.
\end{equation}
Therefore, $||u||_{L^{\alpha+1}(\Omega)}=1$. From the i.s.c. of the
norm we have
$$I\leq J(u)\leq \liminf_{n\rightarrow \infty}J(u_n)=I,$$ and
therefore $I=J(u)$ is attained by $u$, which, up to a constant, is a
non-trivial solution of \eqref{nonexist} in $H_\la$.

If $\alpha< N/(N-2)$ let us show that $u \in D(A_\la)$. Indeed,  due
to the compact embedding $H_\la\hookrightarrow L^q(\Omega)$,
$q<2N/(N-2)$, we have that $|u|^{\alpha-1}u\in L^2(\Omega)$. In
consequence, $u\in D(A_\la)$.
\end{proof}
\begin{proof}[Proof of \ref{a2}]

For the proof of the non-existence part we apply  the Pohozaev
identity in Theorem \ref{poho}. In view of that we use the following
lemma whose proof is postponed at the end of the section.
\begin{lema}\label{lem2}
Assume $\la\leq \la(N)$ and $1< \alpha< \infty$. Then, any solution
$u\in  D(A_\la)$ of \eqref{nonexist} satisfies the identity
\begin{equation}\label{negative}
\frac{1}{2}\int_{\Gamma}(x\cdot \nu)\Big(\frac{\p u}{\p \nu}\Big)^2
d \sigma =\Big(\frac{N}{1+\alpha}-\frac{N-2}{2}\Big)\into
|u|^{\alpha+1}dx.
\end{equation}
\end{lema}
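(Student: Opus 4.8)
The plan is to deduce \eqref{negative} directly from the abstract Pohozaev identity of Theorem \ref{poho}. Indeed, if $u\in D(A_\la)$ solves \eqref{nonexist}, then $f:=A_\la u=|u|^{\alpha-1}u$ belongs to $L^2(\Omega)$ and we may insert it into \eqref{pohoidentity}, which gives
\begin{equation*}
\frac{1}{2}\int_{\Gamma}(x\cdot\nu)\Big(\frac{\p u}{\p\nu}\Big)^2 d\sigma=-\into |u|^{\alpha-1}u\,(x\cdot\n u)\,dx-\frac{N-2}{2}||u||_{H_\la}^2 .
\end{equation*}
It then remains to evaluate the two terms on the right; both reduce to $\into|u|^{\alpha+1}dx$.

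For the first term I would use the pointwise identity $|u|^{\alpha-1}u\,(x\cdot\n u)=\frac{1}{\alpha+1}\,x\cdot\n(|u|^{\alpha+1})$ (legitimate away from the origin since $u\in H^2_{\textrm{loc}}(\Omega\setminus\{0\})$ and $t\mapsto |t|^{\alpha+1}$ is $C^2$ for $\alpha>1$) and integrate by parts against the cut-off $\theta_\eps$ of \eqref{theaaprox}:
\begin{equation*}
\into |u|^{\alpha-1}u\,(x\cdot\n u)\,\theta_\eps\,dx=-\frac{N}{\alpha+1}\into |u|^{\alpha+1}\theta_\eps\,dx-\frac{1}{\alpha+1}\into (x\cdot\n\theta_\eps)\,|u|^{\alpha+1}\,dx ,
\end{equation*}
the boundary contribution being absent because $u=0$ on $\Gamma$ and $\theta_\eps$ has compact support. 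The last integral is supported in $B_{2\eps}\setminus B_\eps$, where $|x\cdot\n\theta_\eps|=O(1)$, hence it is bounded by $C\int_{B_{2\eps}\setminus B_\eps}|u|^{\alpha+1}dx\to0$ once we know $|u|^{\alpha+1}\in L^1(\Omega)$; and this holds since $|u|^\alpha=|f|\in L^2(\Omega)$ gives $|u|^{\alpha+1}\in L^{2\alpha/(\alpha+1)}(\Omega)\subset L^1(\Omega)$. On the left-hand side one passes to the limit by dominated convergence, the integrand being dominated by $|f|\,|x\cdot\n u|\in L^1(\Omega)$, where $x\cdot\n u\in L^2(\Omega)$ by Theorem \ref{tu8}. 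This yields $-\into |u|^{\alpha-1}u\,(x\cdot\n u)\,dx=\frac{N}{\alpha+1}\into |u|^{\alpha+1}dx$.

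For the second term I would compute $||u||_{H_\la}^2$ by testing the equation with $u\theta_\eps$: integration by parts gives
\begin{equation*}
\into |u|^{\alpha+1}\theta_\eps\,dx=\into A_\la u\,(u\theta_\eps)\,dx=\into\Big[|\n u|^2-\la\frac{u^2}{|x|^2}\Big]\theta_\eps\,dx+\into u\,(\n u\cdot\n\theta_\eps)\,dx .
\end{equation*}
As $\eps\to0$ the first integral converges to $||u||_{H_\la}^2$ by the characterization \eqref{newnormB} together with the Hardy inequality and dominated convergence (exactly as in Step 1 of the proof of Theorem \ref{poho}), while the cross term is controlled by $\int_{B_{2\eps}\setminus B_\eps}\big(u^2/|x|^2+|\n u|^2\big)\,dx\to0$. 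Hence $||u||_{H_\la}^2=\into |u|^{\alpha+1}dx$, and substituting both evaluations into the first display produces
\begin{equation*}
\frac{1}{2}\int_{\Gamma}(x\cdot\nu)\Big(\frac{\p u}{\p\nu}\Big)^2 d\sigma=\frac{N}{\alpha+1}\into|u|^{\alpha+1}dx-\frac{N-2}{2}\into|u|^{\alpha+1}dx ,
\end{equation*}
which is \eqref{negative}.

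The delicate point is not the algebra but justifying the two integrations by parts across the singularity at the origin, i.e.\ showing that every cut-off remainder genuinely vanishes; this rests on the three integrability facts $A_\la u\in L^2(\Omega)$, $|x|\,|\n u|\in L^2(\Omega)$ (a by-product of Theorem \ref{tu8} already used in the proof of Theorem \ref{trace}) and $|u|^{\alpha+1}\in L^1(\Omega)$. In the critical case $\la=\la(N)$ one must moreover keep the convention that $||\cdot||_{H_\la}$ is understood in the sense of \eqref{newnormB}, so that the identity $(A_\la u,u)_{L^2(\Omega)}=||u||_{H_\la}^2$ used above remains valid for $u\in D(A_\la)$, in agreement with the treatment of the critical case in the proof of Theorem \ref{poho}.
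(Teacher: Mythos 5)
Your proposal is correct and follows essentially the same route as the paper: apply the Pohozaev identity of Theorem \ref{poho} with $f=|u|^{\alpha-1}u$, evaluate $\into |u|^{\alpha-1}u\,(x\cdot\n u)\,dx=-\frac{N}{\alpha+1}\into|u|^{\alpha+1}dx$ by a cut-off integration by parts using $|u|^{\alpha+1}\in L^1(\Omega)$, and obtain $||u||_{H_\la}^2=\into|u|^{\alpha+1}dx$ by testing the equation with $u$. Your justification of the vanishing remainder terms and of $|u|^{\alpha+1}\in L^1(\Omega)$ is, if anything, slightly more explicit than the paper's.
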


\noindent  \textsl{The case $\alpha> \alpha_\star$.}\\

 Note  that $x\cdot \nu \geq 0$
  for all $x\in \Gamma$.  Assuming
$u\not \equiv 0$, from Lemma \ref{lem2} we obtain $(N-2)/2\leq
N/(\alpha+1)$ which is equivalent to $\alpha\leq \alpha_\star$. This
is in contradiction with the hypothesis on $\alpha$. Therefore
$u\equiv
0$ in $\Omega$.\\

\noindent \textsl{The case $\alpha=\alpha_\star$.}\\

 From Lemma \ref{lem2}, due to the criticality of $\alpha_\star$,
 $u$ must satisfy
$$\int_{\p \Omega} (x\cdot \nu)\Big(\frac{\p u}{\p \nu}\Big)^2 d\sigma=0.$$ We fix  $\Omega=\{x\in
\rr_{+}^{N} \ |\ |x'|^2+(x_N-1)^2\leq 1\}$ which is star-shaped.
 Therefore,
$$\frac{\p u}{\p \nu}=0, \q \textrm{a.e. on }\Gamma.$$
 Therefore, the problem in consideration is
reduced to the overdetermined system
\begin{equation}\label{overlap}
\left\{ \begin{array}{ll}
  -\D u-\frac{\lambda}{|x|^2}u=|u|^{\frac{4}{N-2}} u,   & x\in  \Omega,  \\
  u=0, & x\in \Gamma, \\
  \frac{\p u}{\p \nu}=0, & x\in \Gamma.
\end{array}\right.
\end{equation}

Let us consider a compact subset $\Gamma'\subset \Gamma$ such that
$x\cdot \nu >0$ and $0\not \in \Gamma'$. Next,  we extend $\Omega$
with a bounded set $\Omega_1$ such that $\Omega_1 \cap \Omega={\O}$,
$\p \Omega_1 \cap \p \Omega=\Gamma'$,
$\tilde{\Omega}:=\Omega\cup \Omega_1$.\\
 For $\eps>0$ small enough
we denote the sets $\Omega_\eps:=\Omega\setminus \{ x\in \Omega | \
|x|< \eps\}$, $\tilde{\Omega}_\eps:=\tilde{\Omega}\setminus \{x\in
\Omega\ |\ |x|< \eps \}$.

Consider also the trivial prolongation of u to $\tilde{\Omega}$
\begin{equation}\label{eqq1}
\tilde{u}:=\left\{\begin{array}{ll}
  u, & x\in \Omega, \\
  0, &  \Omega_1.\\
\end{array}\right.
\end{equation}
The fact that $u\in D(A_\la)$ combined with the over-determined
condition in \eqref{overlap}, imply that $u\in
H^2(\tilde{\Omega}_\eps)$. Let us also show that $\tilde{u}\in
H^2(\tilde{\Omega}_\eps)$.

Indeed, thanks to \eqref{overlap} on $\Gamma_0$ we get that
\begin{equation}\label{deriv}
\int_{\tilde{\Omega}_\eps} \frac{d \tilde{u}}{\p x_i} \frac{\p
\phi}{\d x_j} dx =-\int_{\tilde{\Omega}_\eps} g \phi dx, \q \forall
\phi \in C_{0}^{\infty}(\tilde{\Omega}_\eps),
\end{equation}
where $g\in L^2(\tilde{\Omega}_\eps)$ is given by
\begin{equation}
g=\left\{
\begin{array}{ll}
  \frac{\p^2 u}{\p x_i \p x_j},   &  x\in \Omega_\eps, \\
  0, & x\in \Omega_1. \\
\end{array}\right.
\end{equation}

In particular we obtain that
\begin{equation}
\D \tilde{u}=\left\{
\begin{array}{ll}
  \D u,   &  x\in \Omega_\eps, \\
  0 & x\in \Omega_1. \\
\end{array}\right.
\end{equation}
and $\tilde{u}$ verifies
\begin{equation}\label{neweq}
-\D
\tilde{u}-\frac{\la}{|x|^2}\tilde{u}=|\tilde{u}|^{\frac{4}{N-2}}\tilde{u}
\ \textrm{ a.e.  in } \tilde{\Omega}_\eps
\end{equation}
and $\tilde{u}\equiv 0$ in $\Omega_1$. In other words we can write
\eqref{neweq} as
$$-\D \tilde{u}=V(x) \tilde{u}, \q x\in \tilde{\Omega}_\eps,$$
where $V(x)=\frac{\la}{|x|^2}+|\tilde{u}|^{\frac{4}{N-2}}$. Note
that $V\in L^\omega(\tilde{\Omega}_\eps)$ for some $\omega> N/2$ and
$\tilde{u}$ vanishes in $\Omega_1$.

With these we are in the hypothesis of the strong unique
continuation result by Jerison and Kenig \cite{kenig}.  Therefore,
$\tilde{u}\equiv 0$ in $\tilde{\Omega}_\eps$ and in particular
$u\equiv 0$ in $\Omega_\eps$, for any $\eps>0$. Hence, we conclude
that $u\equiv 0$ in $\Omega$. The proof of Theorem \ref{th1} is
finished.
\end{proof}

\begin{proof}[Proof of Lemma \ref{lem2}]
Since $u\in D(A_\la)$ we can apply the Pohozaev identity and we get
\begin{equation}\label{poh}
\frac{1}{2}\int_{\p\Omega}( x\cdot \nu)\Big(\frac{\p u}{\p
\nu}\Big)^2
 d\sigma
=\into -|u|^{\alpha-1} u (x\cdot \n u)
dx-\frac{N-2}{2}||u||_{H_\la}^{2}, \end{equation} Next we show that
\begin{equation}\label{cri1}
\into |u|^{\alpha} u (x\cdot \n u)dx=-\frac{N}{1+\alpha}\into
|u|^{\alpha+1}dx.
\end{equation}
We proceed by approximation arguments. For $\eps>0$  small enough we
 consider $I_\eps:=\into |u|^{\alpha} u (x\cdot \n u)\theta_\eps
dx$, where $\theta_\eps$ is a cut-off function supported in
$\Omega\setminus B_{\eps}(0)$.  Due to the fact that $u\in
H^2(\Omega\setminus \{0\})$ we can integrate by parts as follows.
\begin{align}\label{aprox}
I_\eps&= -\frac{1}{2}\into |u|^{p-1} x\cdot \n (u^2) \theta_\eps dx
= \frac{1}{2}\into u^2 \textrm{div}\big[ |u|^{\alpha-1} x
\theta_\eps\big]dx \nonumber\\
&=\frac{1}{2} \into u^2 \big[N |u|^{\alpha-1}\theta_\eps +x\cdot
\theta_\eps  |u|^{\alpha-1} + (\alpha-1) x\cdot \n u
|u|^{\alpha-3}u\big]dx\nonumber\\
&= \frac{N}{2}\into |u|^{\alpha+1} \theta_\eps dx + \frac{1}{2}\into
|u|^{\alpha+1} x\cdot \n \theta_\eps dx -\frac{\alpha-1}{2}I_\eps.
\end{align}
Therefore we obtain
\begin{equation}\label{appr}
I_\eps= \frac{N}{\alpha+1} \into |u|^{\alpha+1} \theta_\eps dx+
\frac{1}{\alpha+1}\into |u|^{\alpha+1} x\cdot \n \theta_\eps dx.
\end{equation}
From the equation itself it is easy to see that $|u|^{\alpha+1}\in
L^1(\Omega)$ provided $u\in D(A_\la)$. Therefore, by the DCT we can
pass to the limit as $\eps\rightarrow 0$ in \eqref{appr} to obtain
the identity \eqref{cri1}. On the other hand, multiplying
\eqref{nonexist} by $u$ and integrating we obtain
$$||u||_{H_\la}^2=\into |u|^{\alpha+1}dx,$$
 Combining this with  \eqref{cri1}
 and \eqref{poh} we conclude \eqref{negative}.
\end{proof}

\subsection{Brief presentation of
the case C2}\label{2sec} Inequalities \eqref{oeq3}, \eqref{oeq44}
can be  stated in a simplified form as follows.

 Assume $\Omega\subset \rr^N$ is a smooth bounded domain containing the origin on the boundary.
  For any $\l \leq  N^2/4$   and any $0<\gamma<2$ 
  there exists a constant $C_1(\gamma, \Omega)\geq 0$ such that
\begin{equation}\label{Hardy}
    \forall u\in \hoi,
    \qquad     \int_{\Omega} \frac{u^2}{|x|^\gamma}dx
    + \l \into
    \frac{u^2}{|x|^2} dx \leq \into |\nabla u|^2+
    C_1(\gamma,
\Omega)\into u^2 dx. \end{equation}

\subsubsection{ Functional framework via Hardy inequality}

Let us now define the set
\begin{equation}\label{set}
 \mathcal{C}:= \Big \{ C\geq 0 \ \textrm{ s. t. } \ \inf_{u\in \hoi}
 \frac{\into \big[|\n u|^2-\l(N)u^2/|x|^2+C u^2\big ] dx
 }{\into
 u^2/|x|^\gamma dx} \geq 1 \ \Big\}.
\end{equation}

Of course, $\mathcal{C}$ is non empty due to inequality
\eqref{Hardy}. Next we define
\begin{equation}\label{optimalconstant}
\mathcal{C}_0=\inf_{C\in \mathcal{C}}C .
\end{equation}

Then, for any  $\la\leq \la(N)=N^2/4$ we introduce the Hardy
functional
\begin{equation}\label{eq130}
B_\la[u]:=\int_{\Omega} |\nabla u|^2dx-\la\int_{\Omega}
\frac{u^2}{|x|^2}dx+\mathcal{C}_0\int_{\Omega} u^2dx,
\end{equation}
which is positive for any $u\in \hoi$ due to inequality
\eqref{Hardy} and the election of $\mathcal{C}_0$. Then we define
the corresponding Hilbert space $H_{\la}$ as the closure of
$C_{0}^{\infty}(\Omega)$ in the norm induced by $B_\la[u]$. Observe
that for any $\la< \la(N)$ the identification $H_\la =\hoi$ holds
true. Indeed, if $\la< \la(N)$, we have
\begin{align}\label{normsubcritical}
B_\la[u]&\geq \big(1-\frac{\la}{\la(N)}\big)\into |\n u|^2
dx-\frac{\mathcal{C}_0 \la}{\la(N)}\into u^2 dx.
\end{align}
On the other hand, from the definition of $\mathcal{C}_0$ we obtain
that there exists a constant $C_1=C_1(\gamma)>0$ such that
\begin{align}\label{normsub}
B_\la[u] \geq C_1 \into u^2 dx.
\end{align}
Multiplying \eqref{normsub} by $\mathcal{C}_0 \la /(C_1 \la(N))$ and
summing to \eqref{normsubcritical} we get that
$$B_\la[u] \geq C_\la \into |\n u|^2 dx,$$
for some positive constant $C_\mu$ that converges to zero as $\la$
tends to $\la(N)$.

Besides, in the critical case $\la=\la(N)$, $H_\la$ is slightly
larger than $\hoi$. However,  using cut-off arguments near the
singularity (see e.g. \cite{vazzua}) we can show that
\begin{equation}\label{slightly}
B_\la[u]_{\la(N)}\geq C_\eps ||u||_{H^{1}(\Omega \setminus
B_\eps(0))}, \q \forall u\in \hoi
\end{equation}
where $C_\eps$ is a constant going to zero as $\eps$ tends to zero.

Let us define de operator $A_\la:=-\D-\la/|x|^2+\mathcal{C}_0 I$ and
define its domain as
\begin{equation}\label{domain}
D(A_\la):=\{ u \in H_\la \ | \  A_\la u \in L^2(\Omega)\}.
\end{equation}
The norm of the operator $A_\la$ is given by
\begin{equation}\label{operator}
||u||_{D(A_\la)}=||u||_{L^2(\Omega)}+ ||A_\la u||_{L^2(\Omega)}.
\end{equation}

\subsubsection { The meaning of the  $H_\la$-norm}

First of all we remark that
\begin{equation}\label{genident}
\into |\n u|^2 dx+ \into \frac{\D \Phi}{\Phi}u^2 dx = \into \Big| \n
u -\frac{\n \Phi}{\Phi}u\Big|^2 dx, \q \forall \ u\in
C_{0}^{\infty}(\Omega),
\end{equation}
and any distribution satisfying $\Phi, 1/\Phi \in
C^1(\Omega\setminus\{0\}) $ and $\Phi>0$ in $\Omega$.

Let us also consider $\phi(x)=\phi(|x|)\in C^\infty(\Omega)$ to be a
cut-off function such that
\begin{equation}\phi=\left\{
\begin{array}{ll}
1,& |x|\leq r_0/2, \ x\in \Omega\\
0, &  |x|\geq r_0, \ x\in \Omega,\\
\end{array}\right.
\end{equation}
where $r_0>0$ is aimed to be small.

\noindent \textsl{ Case 1.Assume the points on the boundary $\Gamma$
of $\Omega$ satisfy  $x_N>0$ in a neighborhood of the origin.}

We take $\Phi_1=x_N|x|^{-N/2}$ which satisfies the equation
\begin{equation}\label{eqdi} -\D \Phi_1
-\frac{N^2}{4}\frac{\Phi_1}{|x|^2}=0, \q \textrm{ a. e.  in
}\Omega_{r_0}, \end{equation}
 where $\Omega_{r_0}:=\Omega\cap
B_{r_0}(0)$ for some $r_0>0$ small enough. From \eqref{genident} and
\eqref{eqdi} we obtain
\begin{equation}\label{partial}
\int_{\Omega_{r_0}}|\n v|^2dx -\frac{N^2}{4} \int_{\Omega_{r_0}}
\frac{v^2}{|x|^2}dx=\int_{\Omega_{r_0}} \big| \n v-\frac{\n
\Phi_1}{\Phi_1}v\big|^2 dx, \q \forall \  v \in
C_{0}^{\infty}(\Omega_{r_0}).
\end{equation}
 By a standard cut-off argument, due to \eqref{partial}
 we remark that, there exist some
  weights $\rho_1,\rho_2\in C^\infty(\Omega)$ depending on $r_0$,
  supported far from origin
 such  that
\begin{align}\label{equivnorm}
B_\la[u]&=\into \Big|\n (u\phi)-\frac{\n \Phi_1}{\Phi_1} (u\phi)
 \Big|^2 dx+\into \rho_1|\n u|^2 dx\nonumber\\
&+(\la(N)-\la)\into \frac{u^2}{|x|^2} dx +   \into \rho_2 u^2 dx, \q
\forall  u\in C_{0}^{\infty}(\Omega).
\end{align}
Then the meaning of $||\cdot||_{H_\la}$-norm  is
characterized by 
\begin{align}\label{newnorm}
||u||_{H_\la}^2&= \lim_{\eps\rightarrow 0}\int_{x\in \Omega,
|x|>\eps}\Big|\n (u\phi)-\frac{\n \Phi_1}{\Phi_1}(u\phi)
\Big |^2 dx+\into \rho_1|\n u|^2 dx\nonumber\\
&+(\la(N)-\la)\into \frac{u^2}{|x|^2} dx +   \into \rho_2 u^2 dx, \q
 \forall u \in H_\la, \q \forall \la\leq \la(N).
\end{align}

\noindent \textsl{ Case 2. Assume the points on $\Gamma$ satisfy
$x_N\leq 0$ in a neighborhood of the origin}

In this case we consider  $d=(x, \Gamma)=d(x)$ the function denoting
the distance from a point $x\in \Omega$ to  $\Gamma$. We  remark
that close enough to origin the distribution
$$\Phi_2= d(x)e^{(1-N)d(x)}|x|^{-N/2}\Big|\log\frac{1}{|x|}\Big|^{1/2},$$
satisfies
$$ -\D \Phi_2 -\frac{N^2}{4|x|^2}\Phi_2=P>0,  \q \forall
  x\in \Omega_{r_0}$$
where $r_0>0$ is small enough.%
 Due to this, there exist the weights  $ \rho_1, \rho_2\in
C^\infty(\Omega)$ depending on $r_0$ and supported away from origin,
such that the meaning of $H_\la$-norm is given by
\begin{align}\label{newnorm2}
||u||_{H_\la}^2&=\lim_{\eps\rightarrow 0}\int_{x\in \Omega,
|x|>\eps} \Big| \n (u\phi)- \frac{\n \Phi_2}{\Phi_2}(u\phi)  \Big|^2
dx+\into \frac{P}{\Phi_2} |u\phi|^2 dx \nonumber\\&
+(\la(N)-\la)\into \frac{u^2}{|x|^2}+\into \rho_1|\n u|^2 dx +\into
\rho_2 u^2 dx, \q \forall u\in H_\la, \q \forall \la\leq \la(N).
\end{align}

\noindent \textsl{ Case 3. Assume that $x_N$ changes sign on
$\Gamma$ at the origin.}

This case can be analyzed through  Case 2 above.\\

Then, the Pohozaev identity and  related results presented in case
C1 might be extended to case C2 by means of the weaker functional
settings introduced above.

\section{Applications to Controllability}\label{5sec}
 In this section we study the controllability of
  the wave and Schr\"{o}dinger equations with
singularity localized on the boundary of a smooth domain. Our
motivation came through the results shown in \cite{judith} in the
context of interior singularity.

For the sake of clarity, we
will discuss in a detailed manner the case C1. 

\subsection{ The wave equation. Case C1}\label{9sec}
In the sequel,  we are  focused to the controllability of the
wave-like system
\begin{equation}\label{eqW}(W_{\la}):\left\{\begin{array}{ll}
  u_{tt}-\Delta u-\lambda \frac{u}{|x|^2}=0, & (t,x)\in Q_T, \\
  u(t,x)=h(t,x), & (t,x)\in (0,T)\times \Gamma_0, \\
  u(t,x)=0, & (t,x)\in (0,T)\times (\Gamma\setminus \Gamma_0), \\
  u(0,x)=u_0(x), &  x\in \Omega, \\
  u_t(0,x)=u_1(x), & x \in \Omega. \\
\end{array}\right.
\end{equation}
 where $Q_T=(0, T)\times \Omega $, $\Gamma$ denotes
 the boundary of $\Omega$ and $\Gamma_0$
 is the boundary control region  defined in \eqref{oeq2}, where
  the control $h \in L^2((0, T)\times
 \Gamma_0)$ is acting. We also assume
 $\la\leq \la(N)$.
 In view of the
time-reversibility of the equation it is enough to consider the case
where the target
$$(\overline{u_0},\overline{u_1})=(0,0).$$ It is the so-called \textsl{null
controllability problem}.

\subsubsection{Well-posedness}\label{oeq6}
 Let us briefly  discuss the
well-posedness of system \eqref{eqW} in the corresponding functional
setting.

 Instead of (\ref{eq124}) we firstly consider the more general
system with non-homogeneous boundary conditions:
\begin{equation}\label{eq62}\left\{\begin{array}{ll}
  u_{tt}-\Delta u-\lambda \frac{u}{|x|^2}=0, & (t,x)\in Q_T, \\
  u(t,x)=g(t,x), & (t,x)\in  \Sigma_T, \\
  u(0,x)=u_0(x), &  x\in \Omega, \\
  u_t(0,x)=u_1(x), & x \in \Omega. \\
\end{array}\right.
\end{equation}
where $\Sigma_T=(0, T)\times \Gamma$.
 The solution of (\ref{eq62}) is defined by the transposition
 method (J.L. Lions \cite{lions1}):
\begin{Def}\label{d1}
Assume $\la\leq \la(N)$. For $(u_0,u_1)\in L^2(\Omega)\times
H'_{\lambda}$ and $g\in L^2((0,T)\times \Gamma)$, we say that $u$ is
a {\bf weak solution} for (\ref{eq62}) if
\begin{equation}\label{eq63}
\int_{0}^{T}\int_{\Omega} u f dx=-<u_0,z'(0)>_{L^2(\Omega),
L^2(\Omega)}+<u_1,z(0)>_{H_{\lambda}^{'},H_\lambda}
-\int_{0}^{T}\int_{\Gamma}g\frac{\p z}{\p \nu} \quad \forall \q f\in
\mathcal{D}(\Omega),
\end{equation}
where $<\cdot,\cdot>$ represents the dual product between
$H_\lambda$ and its dual $H_{\lambda}^{'}$, and $z$ is the solution
of the non-homogeneous adjoint-backward problem
\begin{equation}\label{eq16}\left\{\begin{array}{ll}
  z_{tt}-\Delta z-\lambda \frac{z}{|x|^2}=f, & (t,x)\in Q_T, \\
  z(t,x)=0, & (t,x)\in \Sigma_T, \\
  z(T,x)=z'(T,x)=0, &  x\in \Omega.
\end{array}\right.
\end{equation}
\end{Def}
Formally, \eqref{eq63} is obtained by multiplying the system
\eqref{eq16} with $u$  and integrate on $Q_T$. 
 Using the Hardy inequalities above and the application of standard
methods for evolution equations we lead to the following existence
result.
\begin{Th}[{\bf well-posedness}]\label{t2}
Assume that $\Omega$ satisfies  C1. Let $T>0$ be given and assume
$\lambda\leq \lambda(N)$. For every $(u_0,u_1)\in L^2(\Omega)\times
H_{\lambda}^{'}$ and any $h\in L^2((0,T)\times \Gamma_0)$ there
exists a unique weak solution of (\ref{eq124}) such that
\begin{equation}\label{eq20}
u\in C([0,T]; L^2(\Omega))\cap C^1([0,T]; H_{\lambda}^{'}).
\end{equation}
Moreover,  the solution of (\ref{eq124}) satisfies
\begin{equation}\label{eq21}
||(u,u_t)||_{L^\infty(0,T; L^2(\Omega)\times
H_{\lambda}^{'})}\lesssim ||(u_0,u_1)||_{L^2(\Omega)\times
H_{\lambda}^{'}}+||h||_{L^2((0,T)\times \Gamma_0)}.
\end{equation}
\end{Th}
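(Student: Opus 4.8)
The plan is to construct the solution of \eqref{eq124} in the sense of transposition (Definition \ref{d1}); this reduces the whole statement to two facts about the backward adjoint problem \eqref{eq16} --- well-posedness in the finite-energy class and a hidden-regularity bound for the normal derivative --- after which \eqref{eq20}--\eqref{eq21} follow by classical functional-analytic arguments for the transposition method (cf. \cite{lions1}). First I would observe that, thanks to the Hardy inequality \eqref{oeq3} when $\la<\la(N)$ and to the functional framework of Subsection \ref{1subsec} when $\la=\la(N)$, the operator $A_\la$ with domain $D(A_\la)$ is a nonnegative self-adjoint operator on $L^2(\Omega)$ with form domain $H_\la$ and $\langle A_\la u,u\rangle_{L^2(\Omega)}=||u||_{H_\la}^2$ for all $u\in D(A_\la)$. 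Hence the wave operator $\mathcal{A}(z,w)=(w,-A_\la z)$ is skew-adjoint on the energy space $\mathcal{H}:=H_\la\times L^2(\Omega)$, so by Stone's theorem it generates a $C_0$-group of isometries on $\mathcal{H}$. Consequently, for every $(z_0,z_1)\in\mathcal{H}$ and $f\in L^1(0,T;L^2(\Omega))$ the problem \eqref{eq16} has a unique solution $z\in C([0,T];H_\la)\cap C^1([0,T];L^2(\Omega))$, and energy conservation together with Duhamel's formula gives $||(z,z_t)||_{L^\infty(0,T;\mathcal{H})}\lesssim ||(z_0,z_1)||_{\mathcal{H}}+||f||_{L^1(0,T;L^2(\Omega))}$.

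The technical heart is the hidden regularity of $\p z/\p\nu$. For smooth data I would multiply the equation in \eqref{eq16} by $|x|^2\,\vec{q}\cdot\n z\,\theta_\eps$ plus a suitable lower-order term, where $\vec{q}\in(C^2(\overline{\Omega}))^N$ satisfies $\vec{q}=\nu$ on $\Gamma$ and $\theta_\eps$ is the cut-off \eqref{theaaprox}, integrate over $Q_T$, and carry out the computation of Lemma \ref{lem}; the extra terms involving $z_t$ are absorbed using the energy bound above. Combining this with the estimate $\into|x|\,|\n z|^2\,dx\lesssim||z||_{H_\la}^2$ recalled in the proof of Theorem \ref{trace}, then Cauchy--Schwarz and Fatou's lemma as $\eps\to0$, I expect to reach
$$\int_0^T\int_\Gamma\Big(\frac{\p z}{\p\nu}\Big)^2|x|^2\,d\sigma\,dt\lesssim ||(z_0,z_1)||_{\mathcal{H}}^2+||f||_{L^1(0,T;L^2(\Omega))}^2,$$
so that $|x|\,\p z/\p\nu\in L^2(\Sigma_T)$ and depends continuously on the data; this is the evolution counterpart of Theorem \ref{trace}, and it is the quantity appearing in the multiplier identity \eqref{multiident}. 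In the critical case $\la=\la(N)$ one first proves the estimate for the subcritical approximations $A_{\la(N)-\eps}$ and then passes to the limit by Lemma \ref{lemalimit} and Theorem \ref{tu8}, exactly as in Step 2 of the proof of Theorem \ref{poho}. The main obstacle lies here: the potential $1/|x|^2$ is unbounded precisely on the portion of $\Gamma$ on which the normal-derivative trace is taken, so neither standard elliptic trace theory nor the usual wave hidden-regularity theorem applies; one is forced to work with the weighted trace $|x|\,\p z/\p\nu$ and to invoke the improved Hardy inequality above, the degeneracy of the weight $|x|^2$ at the pole being exactly compensated by the choice $\Gamma_0=\{x\cdot\nu\ge0\}$, along which $0\le x\cdot\nu=O(|x|^2)$.

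With these two ingredients the proof is completed by transposition. For fixed $(u_0,u_1)\in L^2(\Omega)\times H'_\la$ and $g$ the extension of $h$ by zero outside $\Gamma_0$, the right-hand side of \eqref{eq63} defines a bounded linear functional of $f\in L^2(Q_T)$ --- the interior terms by the energy estimate of the first step and the boundary term by pairing $|x|\,\p z/\p\nu$ against the datum via the hidden-regularity bound --- so by Riesz representation there is a unique $u\in L^\infty(0,T;L^2(\Omega))$ satisfying \eqref{eq63}, uniqueness being immediate from the density of the test functions. Choosing $f$ suitably and dualizing yields the estimate \eqref{eq21}. Finally, the time regularity $u\in C([0,T];L^2(\Omega))\cap C^1([0,T];H'_\la)$ is obtained by the classical regularization argument: approximate $(u_0,u_1,h)$ by smooth data, for which $u$ is a strong solution enjoying this regularity, and pass to the limit using the uniform bound \eqref{eq21}.
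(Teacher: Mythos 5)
Your overall route -- transposition via Definition \ref{d1}, semigroup well-posedness of the backward adjoint problem \eqref{eq16} on $H_\la\times L^2(\Omega)$, a hidden-regularity estimate for the normal derivative obtained from the weighted multiplier $|x|^2\vec q\cdot\n z\,\theta_\eps$, and then Riesz representation plus density -- is exactly the one the paper intends: the author omits the proof and states that it "follows the same steps as in \cite{judith}", and your first and second paragraphs reproduce the ingredients the paper does develop (Proposition \ref{p3}, Lemma \ref{lem}, Theorems \ref{trace} and \ref{Wmult}).

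There is, however, one step where "the same steps as in \cite{judith}" genuinely break down and where your argument as written has a gap: the boundary term in the transposition identity \eqref{eq63}. In \cite{judith} the singularity is interior, so the standard multiplier gives the \emph{unweighted} bound $\p z/\p\nu\in L^2(\Sigma_T)$ and the pairing with $g\in L^2(\Sigma_T)$ is immediate. Here all your hidden-regularity argument (and Theorem \ref{Wmult}) delivers is
\begin{equation*}
\int_0^T\int_\Gamma\Big(\frac{\p z}{\p\nu}\Big)^2|x|^2\,d\sigma\,dt\lesssim \|(z_0,z_1)\|_{H_\la\times L^2(\Omega)}^2+\|f\|_{L^1(0,T;L^2(\Omega))}^2,
\end{equation*}
and the point $x=0$ belongs to $\Gamma_0$ (since $x\cdot\nu=O(|x|^2)=0$ there). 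Hence for a general control $h\in L^2((0,T)\times\Gamma_0)$ the pairing you propose,
\begin{equation*}
\int_0^T\int_{\Gamma_0}h\,\frac{\p z}{\p\nu}\,d\sigma\,dt=\int_0^T\int_{\Gamma_0}\frac{h}{|x|}\cdot|x|\frac{\p z}{\p\nu}\,d\sigma\,dt,
\end{equation*}
requires $h/|x|\in L^2((0,T)\times\Gamma_0)$, which is \emph{not} implied by $h\in L^2((0,T)\times\Gamma_0)$ because the weight $|x|^{-1}$ blows up at a point of $\Gamma_0$. You cannot repair this by proving an unweighted trace estimate either: the unweighted multiplier $\vec q\cdot\n z$ produces the term $\la\int z^2\,\mathrm{div}(\vec q/|x|^2)\sim\int z^2/|x|^3$, which is not controlled by the Hardy functional. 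The fix is to restrict the admissible controls to the weighted class in which the HUM control actually lives -- note that the control constructed in Section \ref{osec1} is $h=(x\cdot\nu)\,\p v_{\min}/\p\nu$, and since $x\cdot\nu=O(|x|^2)$ on $\Gamma$ and $|x|\,\p v_{\min}/\p\nu\in L^2(\Sigma_T)$, this $h$ does satisfy $h/|x|\in L^2((0,T)\times\Gamma_0)$ -- or, equivalently, to state and prove the transposition estimate \eqref{eq21} with $\|h\|_{L^2((0,T)\times\Gamma_0)}$ replaced by $\|h/|x|\|_{L^2((0,T)\times\Gamma_0)}$ (or $\|h\,|x|^{-1}\|$, i.e.\ a weighted norm). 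As long as you pair the unweighted $L^2$ norm of $h$ against only the weighted trace bound, the functional of $f$ in \eqref{eq63} is not shown to be bounded and the Riesz representation step does not go through.
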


The details of the proof of Theorem \ref{t2} are omitted since they
follow the same steps as in \cite{judith}.

\subsubsection{Controllability and main results}\label{osec1} 
It is by now classical that controllability of  \eqref{eqW} is
characterized through an observability inequality for the adjoint
system as follows below.

  Given initial data  $(u_0,u_1)\in
L^2(\Omega)\times {H_\lambda}'$, 
a possible  control $h\in L^2((0,T)\times \Gamma_0)$ must satisfy
the identity
\begin{equation}\label{eq6}
\int_{0}^{T}\int_{\Gamma_0}h\ \frac{\partial v}{\partial \nu}d\sigma
dt-<u_t(0), v(0)>_{H_{\lambda}^{'}, H_\lambda}+<u(0),
v_t(0)>_{L^2(\Omega), L^2(\Omega)}=0,
\end{equation}
where $v$ is the solution of the adjoint system
\begin{equation}\label{eq2}\left\{\begin{array}{ll}
  v_{tt}-\Delta v-\lambda \frac{v}{|x|^2}=0, & (t,x)\in Q_T, \\
  v(t,x)=0, & (t,x)\in  \Sigma_T, \\
  v(0,x)=v_0(x), &  x\in \Omega, \\
  v_t(0,x)=v_1(x), & x \in \Omega. \\
\end{array}\right.
\end{equation}
The operator $\mathcal{A_\la}$ defined by $\mathcal{A_\la}(w_1,w_2)
= (w_2, \D w_1 + \la |x|^2w_1)$ for all $(w_1,w_2) \in
D(\mathcal{A_\la}) = D(A_\la) \times  H_\la$, generates the wave
semigroup i.e. $ (\mathcal{A_\la},D(\mathcal{A_\la}))$ is
m-dissipative in $H_\la\times L^2(\Omega)$. In view of that, due to
the theory of semigroups,  the adjoint system is well-posed and more
precisely it holds that

\begin{prop}[see, e.g.\cite{judith}]\label{p3}
\begin{enumerate}[(1)]
\item For any initial data $(v_0,v_1)\in H_\lambda\times
L^2(\Omega)$  there exists a unique solution of (\ref{eq2}) $$u \in
C([0,T]; H_\lambda)\cap C^1([0,T]; L^2(\Omega)).$$ Moreover,
\begin{equation}\label{eq18}
||(v ,v_t)||_{L^\infty(0,T; H_\lambda\times L^2(\Omega))}\lesssim
||v_0||_{H_\lambda}+||v_1||_{L^2(\Omega)}
\end{equation}
\item For any initial data $(v_0,v_1)\in D(A_\la)\times
H_\lambda$  there exists a unique solution of (\ref{eq2}) such thar
$$v \in C([0,T]; D(A_\la))\cap C^1([0,T]; H_\lambda)\cap C^2([0,T]; L^2(\Omega)). $$
Moreover
\begin{equation}\label{eq18}
||(v,v_t)||_{L^\infty(0,T; D(A_\la)\times H_\lambda)}\lesssim
||v_0||_{D(A_\la)}+||v_1||_{H_\lambda}
\end{equation}
\end{enumerate}
\end{prop}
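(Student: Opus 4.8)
The plan is to recast the adjoint system \eqref{eq2} as a first-order abstract evolution equation $\tfrac{d}{dt}(v,v_t)=\mathcal A_\la(v,v_t)$ with data $(v_0,v_1)$ in the energy Hilbert space $\mathcal H:=H_\la\times L^2(\Omega)$, and to read off the stated regularity and estimates from the theory of $C_0$-groups, exactly as announced just before the statement. I would equip $\mathcal H$ with the inner product $\langle(w_1,w_2),(\tilde w_1,\tilde w_2)\rangle_{\mathcal H}=B_\la(w_1,\tilde w_1)+\into w_2\,\tilde w_2\,dx$, where $B_\la(\cdot,\cdot)$ is the symmetric bilinear form polarizing the Hardy functional \eqref{normB}. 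A preliminary remark I would record is that \eqref{oeq3} (resp.\ \eqref{oeq44}) gives $B_\la[u]\gtrsim\into u^2\,dx$, since $|x|^2\log^2(R_\Omega/|x|)$ is bounded on $\overline\Omega$; hence $H_\la\hookrightarrow L^2(\Omega)$ continuously and $\langle\cdot,\cdot\rangle_{\mathcal H}$ is a genuine Hilbert structure on $\mathcal H$.

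The heart of the argument is to verify that $\mathcal A_\la$ with domain $D(\mathcal A_\la)=D(A_\la)\times H_\la$ is skew-adjoint, i.e.\ that $\pm\mathcal A_\la$ are both m-dissipative. For dissipativity I would use the identity $B_\la(w_1,\varphi)=\into A_\la w_1\,\varphi\,dx$ (valid for $w_1\in D(A_\la)$, $\varphi\in H_\la$) to compute $\mathrm{Re}\,\langle\mathcal A_\la(w_1,w_2),(w_1,w_2)\rangle_{\mathcal H}=B_\la(w_2,w_1)-\into A_\la w_1\,w_2\,dx=0$. For the range condition, given $(f_1,f_2)\in\mathcal H$ I would solve $(I-\mathcal A_\la)(w_1,w_2)=(f_1,f_2)$: this reduces to $w_2=w_1-f_1$ together with $(I+A_\la)w_1=f_1+f_2$, and the latter is handled by Lax–Milgram applied to the form $a(u,\varphi)=\into u\varphi\,dx+B_\la(u,\varphi)$, which is continuous on $H_\la$ (using $H_\la\hookrightarrow L^2(\Omega)$) and coercive since $a(u,u)\geq\|u\|_{H_\la}^2$; the resulting $w_1$ satisfies $A_\la w_1=f_1+f_2-w_1\in L^2(\Omega)$, so $w_1\in D(A_\la)$ and $(w_1,w_2)\in D(\mathcal A_\la)$. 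The same computation treats $I+\mathcal A_\la$.

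Granting this, Stone's theorem (equivalently Lumer–Phillips applied to $\pm\mathcal A_\la$) yields a $C_0$-group $(S(t))_{t\in\rr}$ of isometries on $\mathcal H$. For part (1), $(v,v_t):=S(\cdot)(v_0,v_1)\in C(\rr;\mathcal H)$ is the unique solution, giving $v\in C([0,T];H_\la)\cap C^1([0,T];L^2(\Omega))$, and the conservation $\|v(t)\|_{H_\la}^2+\|v_t(t)\|_{L^2(\Omega)}^2=\|v_0\|_{H_\la}^2+\|v_1\|_{L^2(\Omega)}^2$ supplies the energy estimate. For part (2), when $(v_0,v_1)\in D(A_\la)\times H_\la$ the group leaves $D(\mathcal A_\la)$ invariant, so $(v,v_t)\in C(\rr;D(\mathcal A_\la))\cap C^1(\rr;\mathcal H)$, whence $v\in C([0,T];D(A_\la))$ and $v_t\in C([0,T];H_\la)$; differentiating the equation, $v_{tt}=\D v+\la v/|x|^2=-A_\la v\in C([0,T];L^2(\Omega))$ because $v\in C([0,T];D(A_\la))$, which gives $v\in C^2([0,T];L^2(\Omega))$. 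The corresponding estimate follows from $\mathcal A_\la S(t)=S(t)\mathcal A_\la$ together with the isometry property, which preserves the graph norm of $D(\mathcal A_\la)$ up to a fixed constant.

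I expect the only genuinely delicate point to be the Lax–Milgram step in the critical case $\la=\la(N)$, where $H_\la$ is strictly larger than $\hoi$ and the meaning of $\|\cdot\|_{H_\la}$ is the subtle one discussed in Subsection \ref{1subsec}. However, that step only requires $B_\la$ to be a closed, bounded, coercive bilinear form on the Hilbert space $H_\la$, which holds by the very definition of $H_\la$ as the completion of $C_0^\infty(\Omega)$ in the $B_\la$-norm, together with the bound $B_\la[u]\gtrsim\|u\|_{L^2(\Omega)}^2$ derived above; no input beyond the Hardy inequalities \eqref{oeq3}--\eqref{oeq44} is needed, and the remaining steps are standard evolution-equation bookkeeping.
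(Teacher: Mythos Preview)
Your proposal is correct and follows precisely the route the paper itself indicates: the paper does not give a detailed proof of this proposition but instead remarks just before it that $(\mathcal A_\la, D(\mathcal A_\la))$ is m-dissipative on $H_\la\times L^2(\Omega)$ and then invokes semigroup theory, citing \cite{judith} for details. Your argument via skew-adjointness, Lax--Milgram for the range condition, and Stone's theorem is exactly the standard verification underlying that remark, so there is no divergence in approach.
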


In the sequel, we claim some ``hidden regularity" effect for the
system \eqref{eq2} which may not be directly deduce from the
semigroup regularity but from the equation itself.
\smallskip
\begin{Th}[\bf Hidden regularity]\label{Wmult}
Assume $\la \leq \la(N)$ and $v$ is the solution of \eqref{eq2}
corresponding to the initial data $(v_0, v_1)\in H_\la\times
L^2(\Omega)$. Then $v$ satisfies
\begin{equation}\label{ineqtrace1}
\int_{0}^{T}\int_{\Gamma} (x\cdot \nu)\Big(\frac{\p v }{\p
\nu}\Big)^2d\sigma dt\lesssim \int_{0}^{T}\int_{\Gamma}\Big(\frac{\p
v }{\p \nu}\Big)^2|x|^2 d\sigma dt \lesssim
||v_0||_{H_\la}^2+||v_1||_{L^2(\Omega)}^2.
\end{equation}
 Moreover, $v$ verifies the identity
\begin{equation}\label{multipliers}
\frac{1}{2}\int_{0}^{T}\int_{\Gamma} (x\cdot \nu)\Big(\frac{\p v
}{\p \nu}\Big)^2d\sigma dt =\frac{T}{2}
(||v_0||_{H_\la}^2+||v_1||_{L^2(\Omega)}^2) +\int_{\Omega} v_t
\big(x\cdot \n v+\frac{N-1}{2}v\big)\Big|_{0}^{T}dx.
\end{equation}
\end{Th}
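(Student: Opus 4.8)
The plan is to reduce everything to the stationary results already established: for smooth enough data one freezes the time and applies Theorem~\ref{poho} (respectively identity \eqref{lastform} of Lemma~\ref{lem}, which underlies Theorem~\ref{trace}) to $u=v(t,\cdot)\in D(A_\la)$ with datum $A_\la v(t,\cdot)=-v_{tt}(t,\cdot)\in L^2(\Omega)$, then integrates the resulting identities over $(0,T)$ and closes the computation with conservation of energy and with the ``equipartition'' identity obtained by pairing the equation with $v$; the passage to general data $(v_0,v_1)\in H_\la\times L^2(\Omega)$ is then a density argument. First I would take $(v_0,v_1)\in D(A_\la)\times H_\la$, so that Proposition~\ref{p3}(2) gives $v\in C([0,T];D(A_\la))\cap C^1([0,T];H_\la)\cap C^2([0,T];L^2(\Omega))$, hence $v(t,\cdot)\in D(A_\la)$ and $A_\la v(t,\cdot)=-v_{tt}(t,\cdot)$ for every $t$; I also record that $E(t):=\tfrac12\big(\|v_t(t)\|_{L^2(\Omega)}^2+\|v(t)\|_{H_\la}^2\big)\equiv E(0)$ (since $v$, hence $v_t$, vanishes on $\Sigma_T$), and that pairing the equation with $v$ and integrating over $Q_T$ yields $\int_0^T\|v(t)\|_{H_\la}^2\,dt=\int_0^T\|v_t(t)\|_{L^2(\Omega)}^2\,dt-\big[\into v_t v\big]_0^T$.

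\textbf{Trace regularity \eqref{ineqtrace1}.} The first inequality is immediate, since the geometric condition \eqref{cond} forces $|x\cdot\nu|\lesssim|x|^2$ on $\Gamma$. For the second I would apply \eqref{lastform} to $u=v(t,\cdot)$ with $f=-v_{tt}(t,\cdot)$, integrate over $(0,T)$, and integrate by parts in $t$ the only term containing $v_{tt}$, namely $\int_0^T\!\!\into v_{tt}\,|x|^2\vec q\cdot\n v\,\theta_\eps$, which produces $\big[\into v_t\,|x|^2\vec q\cdot\n v\,\theta_\eps\big]_0^T$ minus $\int_0^T\!\!\into v_t\,|x|^2\vec q\cdot\n v_t\,\theta_\eps$; the latter I would integrate by parts in space (its boundary contribution vanishing because $v_t=0$ on $\Gamma$). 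All remaining volume integrals are dominated, after Cauchy--Schwarz, by $T\,(\|v_0\|_{H_\la}^2+\|v_1\|_{L^2(\Omega)}^2)$: those carrying weights $|x|,|x|^2,|x|^4$ through $\into|x|\,|\n v|^2\lesssim\|v\|_{H_\la}^2$ (from the proof of Theorem~\ref{tu8}) and Theorem~\ref{tu8} itself (which gives $\into|x|^2|\n v|^2\lesssim\|v\|_{H_\la}^2$ and, $\Omega$ being bounded, $\into|x|^4|\n v|^2\lesssim\|v\|_{H_\la}^2$), those in $v_t^2$ directly through $\int_0^T\|v_t\|_{L^2(\Omega)}^2\le 2TE(0)$, and $E(t)\equiv E(0)$; the $\n\theta_\eps$-errors live on $B_{2\eps}\setminus B_\eps$ and tend to $0$ as in the proof of Theorem~\ref{trace}. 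This gives $\int_0^T\!\!\int_\Gamma(\p v/\p\nu)^2|x|^2\theta_\eps\,d\sigma\,dt\lesssim\|v_0\|_{H_\la}^2+\|v_1\|_{L^2(\Omega)}^2$ uniformly in $\eps$, and Fatou's lemma yields \eqref{ineqtrace1}.

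\textbf{The multiplier identity \eqref{multipliers}.} Applying Theorem~\ref{poho} to $u=v(t,\cdot)$ gives, for a.e. $t\in(0,T)$,
$$\tfrac12\int_\Gamma(x\cdot\nu)\Big(\tfrac{\p v}{\p\nu}\Big)^2d\sigma=\into v_{tt}\,(x\cdot\n v)\,dx-\tfrac{N-2}{2}\|v(t)\|_{H_\la}^2;$$
integrating over $(0,T)$ and integrating $\int_0^T\!\!\into v_{tt}(x\cdot\n v)$ by parts in $t$ --- with $\into v_t(x\cdot\n v_t)=-\tfrac N2\into v_t^2$, the boundary term vanishing --- turns the right-hand side into $\big[\into v_t(x\cdot\n v)\big]_0^T+\tfrac N2\int_0^T\|v_t\|_{L^2(\Omega)}^2-\tfrac{N-2}{2}\int_0^T\|v\|_{H_\la}^2$. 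Writing $\mathcal E:=\|v_1\|_{L^2(\Omega)}^2+\|v_0\|_{H_\la}^2$, energy conservation gives $\|v_t(t)\|_{L^2(\Omega)}^2+\|v(t)\|_{H_\la}^2\equiv\mathcal E$, and combining it with the equipartition identity recorded in the first paragraph gives $\int_0^T\|v\|_{H_\la}^2=\tfrac{T\mathcal E}{2}-\tfrac12[\into v_t v]_0^T$; a short substitution then collapses the three terms above into $\tfrac{T\mathcal E}{2}+\tfrac{N-1}{2}[\into v_t v]_0^T$, which is exactly the right-hand side of \eqref{multipliers}. Finally, for arbitrary $(v_0,v_1)\in H_\la\times L^2(\Omega)$ I would approximate by data in $D(A_\la)\times H_\la$ (dense, since $C_0^\infty(\Omega)\subset D(A_\la)$), use Proposition~\ref{p3}(1) and energy conservation to get convergence of the solutions in $C([0,T];H_\la)\cap C^1([0,T];L^2(\Omega))$, deduce from \eqref{ineqtrace1} applied to differences that $|x|\,\p v^n/\p\nu$ is Cauchy in $L^2((0,T)\times\Gamma)$ (its limit being the hidden normal trace, coinciding with the pointwise one on $(0,T)\times(\Gamma\setminus\{0\})$), and pass to the limit in \eqref{ineqtrace1}--\eqref{multipliers}, the boundary terms converging by this $L^2$-convergence together with $|x\cdot\nu|\lesssim|x|^2$, the volume terms by $H_\la\times L^2(\Omega)$-convergence and Theorem~\ref{tu8} (to control $x\cdot\n v^n$ in $L^2(\Omega)$).

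\textbf{Main obstacle.} The hard part, exactly as in Theorems~\ref{trace}--\ref{poho}, is the interplay between the boundary singularity and the energy: the membership $(\p v/\p\nu)^2|x|^2\in L^1((0,T)\times\Gamma)$ and the \emph{sharp} right-hand side $\|v_0\|_{H_\la}^2+\|v_1\|_{L^2(\Omega)}^2$ in \eqref{ineqtrace1} come out only by bounding every weighted volume term of \eqref{lastform}, \emph{uniformly in} $\eps$, against the conserved energy, which is precisely where the sharp Hardy inequality of Theorem~\ref{tu8} (and the estimate $\into|x|\,|\n v|^2\lesssim\|v\|_{H_\la}^2$) is indispensable; the remaining care concerns the legitimacy of the $t$-integration by parts (covered by the regularity of Proposition~\ref{p3}(2)), the vanishing of the $\n\theta_\eps$-errors supported in $B_{2\eps}\setminus B_\eps$, and --- in the critical case $\la=\la(N)$ --- the meaning of $\|\cdot\|_{H_\la}$ itself (cf. \eqref{density}, \eqref{newnormB}), which is inherited from the corresponding step in the proof of Theorem~\ref{poho}.
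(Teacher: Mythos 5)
Your proposal is correct and follows essentially the same route as the paper: freeze time for data in $D(A_\la)\times H_\la$ (Proposition \ref{p3}(2)), apply the stationary trace/Pohozaev results to $A_\la v(t)=-v_{tt}(t)$, integrate in $t$ with an integration by parts on the $v_{tt}$-term, close with conservation of energy and equipartition, and finish by density. Your handling of the trace bound (integrating the $v_{tt}$-term by parts in time and space inside \eqref{lastform} so that only energy norms appear, uniformly in $\eps$) merely makes explicit what the paper compresses into ``integrating in time and if necessary in space,'' so it is the same argument carried out more carefully.
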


Due to Theorem \ref{Wmult} the operator $(v_0, v_1)\mapsto
\big(\int_{0}^{T}\int_{\Gamma_0}(x\cdot \nu)(\p v/ \p \nu)^2d\sigma
dt\big)^{1/2}$ 
 is a linear continuous map in $H_\la\times L^2(\Omega)$. Let
$\mathcal{H}$ be the completion of this norm in $H_\la \times
L^2(\Omega)$.  We consider the functional $J: \mathcal{H}\rightarrow
\rr$ defined by
\begin{equation}\label{functional}
J(v_0, v_1)(v):=\frac{1}{2}\int_{0}^{T}\int_{\Gamma_0}(x\cdot
\nu)\Big(\frac{\partial v} {\partial \nu}\Big)^2d\sigma dt
-<u_1,v_0>_{H_{\lambda}^{'},H_\lambda}+(u_0,v_1)_{L^2(\Omega),L^2(\Omega)},
\end{equation}
where $v$ is the solution of \eqref{eq2} corresponding to initial
data $(v_0, v_1)$. Of course,
$<\cdot,\cdot>_{H_{\lambda}^{'},H_{\lambda}}$ denotes the duality
product.  A control $h\in L^2((0, T)\times \Gamma_0)$ satisfying
\eqref{eq6}  could be chosen as $h=(x\cdot \nu) v_{\min}$ where
$v_{\min}$ minimizes the functional $J$ on $\mathcal{H}$ among the
solutions $v$ of (\ref{eq2}) corresponding to the initial data
$(u_0, u_1)\in H_{\lambda}^{'}\times L^2(\Omega)$
 The existence of a minimizer of $J$  is assured by the
coercivity of $J$, which is equivalent to
 the \textsl{Observability inequality} for the adjoint
system \eqref{eq2}: 
\begin{equation}
||v_0||_{H_\la}^2+||v_1||_{L^2(\Omega)}^2\lesssim \int_{0}^{T}
\int_{\Gamma_0} (x\cdot \nu) \Big( \frac{\p v}{\p \nu}\Big)^2
d\sigma dt,
\end{equation}

\noindent \textsl{Conservation of energy.}

\noindent For any $\lambda \leq \lambda(N)$ and any fixed time
$t\geq 0$, let us define the energy associated to (\ref{eq2}):
\begin{equation}
E_{v}^{\lambda}(t)=\frac{1}{2}\big(||v_t(t)||_{L^2(\Omega)}^{2}+
||v(t)||_{H_\lambda}^{2}\big)
\end{equation}
We note that our system is conservative and therefore
$$E_{v}^{\la}(t)=E_{v}^{\la}(0), \q \forall \la\leq \la(N), \q \forall
t\in [0, T].$$ Next we claim our main results which answer to the
controllability question.

\begin{Th}[{\bf Observability inequality}]\label{t1}
For all $\lambda\leq \lambda(N)$, there exists a positive constant
$D_1=D_1( \Omega, \la , T)$ such that for all $T\geq 2R_\Omega$, and
any initial data $(v_0, v_1)\in H_\la \times L^2(\Omega)$ the
solution of (\ref{eq2}) verifies the observability inequality
\begin{equation}\label{ObsIneq1}
E_{v}^{\l}(0)\leq D_1\int_{0}^{T}\int_{\Gamma_0}(x\cdot
\nu)\Big(\frac{\p v}{\p \nu}\Big)^2d\sigma dt.
\end{equation}
\end{Th}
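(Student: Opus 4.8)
The plan is to derive the observability inequality \eqref{ObsIneq1} from the multiplier identity \eqref{multipliers} of Theorem \ref{Wmult} combined with the sharp weighted Hardy inequality of Theorem \ref{tu8}, following the classical scheme of J.-L. Lions. First I would rewrite the multiplier identity in terms of the conserved energy. Since $E_v^{\la}(t)\equiv E_v^{\la}(0)$, the bulk term $\frac{T}{2}(\|v_0\|_{H_\la}^2+\|v_1\|_{L^2(\Omega)}^2)$ equals $T\,E_v^{\la}(0)$. The boundary integral on the left of \eqref{multipliers} splits over $\Gamma=\Gamma_0\cup(\Gamma\setminus\Gamma_0)$; on $\Gamma\setminus\Gamma_0$ we have $x\cdot\nu<0$ by the definition \eqref{oeq2} of $\Gamma_0$, so that contribution is nonpositive and can be dropped, leaving
\begin{equation*}
T\,E_v^{\la}(0)\leq \frac12\int_0^T\!\!\int_{\Gamma_0}(x\cdot\nu)\Big(\frac{\p v}{\p\nu}\Big)^2 d\sigma\,dt-\int_\Omega v_t\big(x\cdot\n v+\tfrac{N-1}{2}v\big)\Big|_0^T dx.
\end{equation*}

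The key step is then to absorb the remaining ``energy at times $0$ and $T$'' term $X:=\big|\int_\Omega v_t(x\cdot\n v+\frac{N-1}{2}v)\,dx\big|_0^T\big|$ into a small multiple of $E_v^{\la}(0)$. Here is where Theorem \ref{tu8} enters: by Cauchy--Schwarz, $\big|\int_\Omega v_t\,x\cdot\n v\,dx\big|\leq \frac12\|v_t\|_{L^2}^2+\frac12\int_\Omega|x|^2|\n v|^2 dx$, and \eqref{equu41} bounds $\int_\Omega|x|^2|\n v|^2 dx$ by $R_\Omega^2\,\|v\|_{H_\la}^2+C\|v\|_{L^2(\Omega)}^2$. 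The term $\frac{N-1}{2}\int_\Omega v_t v\,dx$ is controlled by $\|v_t\|_{L^2}^2+\|v\|_{L^2}^2$. Collecting everything and using conservation of energy, one obtains $X\lesssim (1+R_\Omega^2)E_v^{\la}(0)+C\sup_t\|v(t)\|_{L^2(\Omega)}^2$; more carefully, the $|x|^2|\n v|^2$ contribution carries the factor $R_\Omega^2$ precisely because $\|x\|_{L^\infty(\Omega)}=R_\Omega$, which is exactly why $T>2R_\Omega$ is the relevant threshold — the coefficient of $E_v^{\la}(0)$ on the right is at most $R_\Omega$ after optimizing the Cauchy--Schwarz splitting, so $(T-2R_\Omega)E_v^{\la}(0)$ stays on the good side. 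This leaves a lower-order term $C\int_0^T\|v\|_{L^2(\Omega)}^2 dt$, or equivalently $\sup_t\|v(t)\|_{L^2(\Omega)}^2$, on the right-hand side.

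The final step is to remove this lower-order $L^2$ term by the standard compactness--uniqueness argument (see \cite{lions1}). Suppose \eqref{ObsIneq1} fails; then there is a sequence of solutions $v^n$ with $E_{v^n}^{\la}(0)=1$ but $\int_0^T\int_{\Gamma_0}(x\cdot\nu)(\p v^n/\p\nu)^2\,d\sigma\,dt\to 0$. By the already-established inequality with the lower-order term, $\|v^n(0)\|_{L^2(\Omega)}$ is bounded below; extracting a weakly convergent subsequence $(v_0^n,v_1^n)\rightharpoonup(v_0,v_1)$ in $H_\la\times L^2(\Omega)$, the compactness of the relevant embedding (which in this singular setting follows from Theorem \ref{tuu8} together with Sobolev, exactly as invoked in the proof of Theorem \ref{th1}) upgrades this to strong convergence of $v^n(0)$ in $L^2(\Omega)$, hence the limit $v$ is a nonzero solution whose normal derivative vanishes on $(0,T)\times\Gamma_0$. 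A unique continuation / spectral argument for the operator $A_\la$ then forces $v\equiv 0$, a contradiction.

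The main obstacle I anticipate is making the absorption step quantitatively sharp enough to get the threshold $T\geq 2R_\Omega$ rather than some larger time: this requires using Theorem \ref{tu8} with the precise constant $R_\Omega^2$ (not a generic constant) and choosing the Cauchy--Schwarz weights optimally so that the boundary-term estimate on $X$ costs exactly $R_\Omega\,E_v^{\la}(0)$ at each endpoint. A secondary delicate point is the compactness--uniqueness step: one must ensure the embedding used is genuinely compact in the critical case $\la=\la(N)$ (where $H_{\la(N)}$ is strictly larger than $H^1_0(\Omega)$) and that a unique continuation principle is available for $A_{\la(N)}$ near the boundary singularity — this is where the strong unique continuation result of Jerison--Kenig \cite{kenig}, already used in Section \ref{8sec}, should again be invoked away from the origin, combined with an energy argument handling a neighborhood of $x=0$.
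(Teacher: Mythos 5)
Your proposal follows the paper's proof essentially verbatim: the multiplier identity of Theorem \ref{Wmult} combined with conservation of energy and the sign of $x\cdot\nu$ on $\Gamma\setminus\Gamma_0$, the weighted Cauchy--Schwarz absorption of the endpoint terms using Theorem \ref{tu8} with the precise constant $R_\Omega^2$ to produce the $(T-2R_\Omega)$ factor plus a lower-order $L^2$ remainder, and a compactness--uniqueness argument to remove that remainder. The only substantive deviation is the final unique-continuation step, where the paper applies Holmgren's theorem to the wave operator (the potential $\la/|x|^2$ being time-independent and bounded away from the origin, cf.\ \cite{Tataru1}) rather than the elliptic Jerison--Kenig result you suggest, which concerns stationary Schr\"odinger operators and does not directly apply to the evolution equation.
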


The proof of Theorem \ref{t1} relies mainly on the method of
multipliers (cf. \cite{lions1}) and the so called compactness
uniqueness argument (cf. \cite{mach}), combined with the new Hardy
inequalities above. These results guarantee the exact
controllability of (\ref{eq124}) when the control acts on the part
$\Gamma_0$. In conclusion, we obtain
\begin{Th}[{\bf Controllability}]\label{ht1}
Assume that $\Omega$ satisfies $\textsc{C}\ref{eqq2}$ and
$\lambda\leq \lambda(N)$. For any time $T>2R_{\Omega}$,
$(u_0,u_1)\in L^2(\Omega)\times H_{\lambda}^{'}$ and
$(\overline{u_0}, \overline{u_1}) \in L^2(\Omega)\times
H_{\lambda}^{'}$ there exists $h\in L^2((0,T)\times\Gamma_0)$ such
that the solution of (\ref{eq124}) satisfies
\begin{equation*}(u_t(T,x),u(T,x))=(\overline{u_1}(x),\overline{u_0}(x)) \quad
\textrm{ for all } x\in \Omega.
\end{equation*}
\end{Th}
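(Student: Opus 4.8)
The plan is to derive Theorem~\ref{ht1} from the Observability inequality of Theorem~\ref{t1} by the classical Hilbert Uniqueness Method (HUM). First I would invoke the time-reversibility of the wave equation to reduce the exact controllability statement to \emph{null controllability}: it suffices to steer an arbitrary $(u_0,u_1)\in L^2(\Omega)\times H_\la'$ to the rest state $(0,0)$ at time $T$, the general target $(\overline{u_0},\overline{u_1})$ being then reached by superposition with a backward-in-time solution.

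Next I would set up the HUM functional. For $(v_0,v_1)\in H_\la\times L^2(\Omega)$ let $v$ be the corresponding solution of the adjoint system~\eqref{eq2}, and consider $J$ as in~\eqref{functional}. By the Hidden regularity Theorem~\ref{Wmult}, the seminorm $(v_0,v_1)\mapsto\big(\int_{0}^{T}\int_{\Gamma_0}(x\cdot\nu)(\p v/\p\nu)^2 d\sigma dt\big)^{1/2}$ is continuous on $H_\la\times L^2(\Omega)$, so $J$ is well defined, continuous and strictly convex; the duality pairings $\langle\cdot,\cdot\rangle_{H_\la',H_\la}$ make sense thanks to the well-posedness Theorem~\ref{t2} and Proposition~\ref{p3}. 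The key ingredient is that Theorem~\ref{t1}, which holds for every $T>2R_\Omega$, upgrades this seminorm to a norm equivalent to the one of $H_\la\times L^2(\Omega)$; hence $\mathcal H=H_\la\times L^2(\Omega)$ and $J$ is coercive. By the direct method of the calculus of variations $J$ attains a unique minimizer $(\hat v_0,\hat v_1)$, with associated adjoint solution $\hat v$.

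Then I would define the control $h:=(x\cdot\nu)\,\hat v$ on $(0,T)\times\Gamma_0$, which lies in $L^2((0,T)\times\Gamma_0)$ by Theorem~\ref{Wmult}. Writing the Euler--Lagrange equation $J'(\hat v_0,\hat v_1)(v_0,v_1)=0$ for all $(v_0,v_1)\in H_\la\times L^2(\Omega)$ produces exactly the identity~\eqref{eq6} relating $h$, the data $(u_0,u_1)$ and every adjoint solution $v$; by the transposition Definition~\ref{d1} of weak solutions this says precisely that the solution $u$ of~\eqref{eqW} with this control satisfies $(u(T),u_t(T))=(0,0)$. Combining with the reduction of the first paragraph yields the general target and finishes the proof.

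The main difficulty is not the abstract scheme, which is by now standard (cf.~\cite{lions1}), but two points proper to the singular framework: first, that in the critical case $\la=\la(N)$ the space $H_\la$ is strictly larger than $\hoi$, so one must be careful that the spaces, their duals, and all the pairings appearing in~\eqref{functional}--\eqref{eq6} are handled in the correct functional setting; and second, the identification $\mathcal H=H_\la\times L^2(\Omega)$, which is the whole content of the observability inequality Theorem~\ref{t1} and ultimately rests on the sharp Hardy inequality of Theorem~\ref{tu8} together with the multiplier identity~\eqref{multipliers} coming from the Pohozaev identity---these are also what pin down the sharp control time $T>2R_\Omega$.
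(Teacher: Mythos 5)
Your proposal is correct and follows essentially the same route as the paper: the paper obtains Theorem \ref{ht1} exactly by HUM, reducing to null controllability by time-reversibility, minimizing the functional $J$ of \eqref{functional} on $\mathcal{H}$ (identified with $H_\la\times L^2(\Omega)$ thanks to the hidden regularity of Theorem \ref{Wmult} and the observability inequality of Theorem \ref{t1}), taking $h=(x\cdot\nu)v_{\min}$, and reading off \eqref{eq6} via the transposition definition of the solution. The paper leaves these standard steps implicit, so your write-up is simply a faithful expansion of its argument.
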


\subsubsection{Proofs of main results}
First of all, we need to justify that the solution $v$ of adjoint
system \eqref{eq2} posses enough regularity to guarantee the
integrability of the boundary term in  \eqref{ObsIneq1}.  The
justification is not trivial because  the presence of the
singularity at the boundary.

\begin{proof}[Proof of Theorem \ref{Wmult}]
We will proceed straightforward from Theorem \ref{poho}.

Firstly,  we consider  initial data $(v_0, v_1)$ in
$D(\mathcal{A_\la})=D(A_\la)\times H_\la$. Then, according to
Proposition \ref{p3} we have
$$v \in C([0,T]; D(A_\la))\cap C^1([0,T]; H_\la)\cap C^2([0,T];
L^2(\Omega)).$$
   For a fixed time $t\in [0, T]$ we apply
    Theorem \ref{poho} for $A_\la v=-v_{tt}$ and we obtain
\begin{equation}
\int_{\Gamma} \Big(\frac{\p v}{\p \nu}(x, t)\Big)^2|x|^2d\sigma
\lesssim ||v(t)||_{H_\la}^2+||v_{tt}(t)||_{L^2(\Omega)}^2, \q
\forall t\in [0,T].
\end{equation}
Integrating in time and if necessary in space we derive
\begin{align}
\int_{0}^{T}\int_{\Gamma} \Big(\frac{\p v}{\p \nu}(x,
t)\Big)^2|x|^2d\sigma dt \lesssim \int_{0}^{T}||v(t)||_{H_\la}^2 dt
+\int_{\Omega} v_t^2(T,x)dx-\into v_t^2(0, x) dx.
\end{align}
According to the conservation of energy we reach to
\begin{align}
\int_{0}^{T}\int_{\Gamma} \Big(\frac{\p v}{\p \nu}(x,
t)\Big)^2|x|^2d\sigma dt
&\lesssim 2\int_{0}^{T} E_v^\la(t) dt+ E_v^\la(T)\nonumber\\
& =(2T+2)
E_v^\la(0)\nonumber\\
&=(T+1)(||v_0||_{H_\la}^2+||v_1||_{L^2(\Omega)}^2).
\end{align}
Since $x\cdot \nu\lesssim |x|^2$ on $\Gamma$, from above we conclude
the inequality \eqref{ineqtrace1}.

 Next, we apply the Pohozaev identity for $v(t)$,  $t\in [0, T]$.
  Indeed, integrating
in time in Theorem \ref{poho} for $A_\la v=-v_{tt}$,  we get
\begin{align}
\frac{1}{2}\int_{0}^{T}\int_{\Gamma} (x\cdot \nu)\Big(\frac{\p v}{\p
\nu}\Big)^2d\sigma dt&= 
 \into v_t (x\cdot \n v)\Big|_{0}^{T} dx-\frac{N-2}{2}\int_{0}^{T}
||v(t)||_{H_\la}^2 dt\nonumber\\
&=\into v_t (x\cdot \n v)\Big|_{0}^{T} dx-\frac{1}{2}\intot x \cdot
\n (v_t^2) dx dt -\frac{N-2}{2}\int_{0}^{T} ||v(t)||_{H_\la}^2
dt\nonumber\\
&= \into v_t (x\cdot \n v)\Big|_{0}^{T} dx+\frac{N}{2}\into
||v_t(t)||_{L^2(\Omega)}^2 dt -\frac{N-2}{2}\int_{0}^{T}
||v(t)||_{H_\la}^2 dt\nonumber\\
&=\into v_t (x\cdot \n v)\Big|_{0}^{T}
dx+\frac{1}{2}\int_{0}^{T}\big[ ||v_t(t)||_{L^2(\Omega)}^2 dt+
||v(t)||_{H_\la}^2\big]dt\nonumber\\
&+\frac{N-1}{2}\int_{0}^{T}\big[||v_t(t)||_{L^2(\Omega)}^2-
||v(t)||_{H_\la}^2 \big]dt.
\end{align}
Multiplying the equation of $(W_\la)_{adj}$ by $v$ and integrate in
space,
 the equipartition of the energy
$$\into v v_t\Big|_{0}^{T} dx=||v_t(t)||_{L^2(\Omega)}^{2}
-||v||_{H_\la}^2,$$ holds true. Due to the conservation of energy
and from relations above,  we obtain precisely the  identity
\eqref{multipliers}. This yields the proof of Theorem \ref{trace}
for initial data in the domain $D(\mathcal{A_\la})$.
 Then, by density arguments, one can extend
the results for less regular initial data $(v_0, v_1)\in H_\la
\times
L^2(\Omega)$. 
\end{proof}

\begin{proof}[Proof of Theorem \ref{t1}]

In what follows we present the proof in the critical case
$\la=\la(N)$, which is of main interest. The subcritical case
$\la<\la(N)$ is let to the reader.

{\bf Step 1.}  Firstly, from Lemma \ref{Wmult} we remark  that
\begin{equation}\label{eq14}
\int_{\Omega}v_t(\frac{N-1}{2}v+x\cdot \nabla v)dx\Big|_{0}^{T}+
TE_{v}^{\lambda(N)}(0)\leq
\frac{1}{2}\int_{0}^{T}\int_{\Gamma_0}(x\cdot \nu)(\frac{\p v}{\p
\nu})^2d\sigma dt.
\end{equation}
For a fixed time $t=t_0>0$, by Cauchy-Schwartz inequality we have
\begin{align*}
\Big|\int_{\Omega}v_t\Big(\frac{N-1}{2}v+x\cdot \nabla
v\Big)dx\Big|_{t=t_0}&\leq
\frac{R_\Omega}{2}\int_{\Omega}v_{t}^{2}dx+\frac{1}{2R_{\Omega}}\int_{\Omega}\Big(\frac{N-1}{2}v+x\cdot
\nabla v\Big)^2dx\nonumber\\
&=
\frac{R_\Omega}{2}||v_t||_{L^2(\Omega)}^{2}+\frac{1}{2R_\Omega}\Big[\Big(\frac{N-1}{2}\Big)^2||v||_{L^2(\Omega)}^{2}+||x\cdot
\nabla v||_{L^2(\Omega)}^{2}\nonumber\\
&+(N-1)\int_{\Omega}v(x\cdot \nabla v)dx\Big].
\end{align*}
On the other hand it follows
 $$\int_{\Omega}v(x\cdot \nabla
v)dx=\frac{1}{2}\int_{\Omega}x\cdot \nabla
(v^2)dx=-\frac{1}{2}\int_{\Omega}\textrm{div}(x)v^2dx=-\frac{N}{2}\int_{\Omega}v^2dx$$
Therefore we obtain
\begin{align*}\label{eq15}
\Big|\int_{\Omega}v_t\Big(\frac{N-1}{2}v+x\cdot \nabla
v\Big)dx\Big|_{t=t_0}&\leq
\frac{1}{2R_\Omega}||x\cdot \nabla
v||_{L^2(\Omega)}^{2}+\frac{R_\Omega}{2}||v_t||_{L^2(\Omega)}^{2}-\frac{1}{2R_\Omega}\Big(\frac{N^2-1}{4}\Big)||v||_{L^2(\Omega)}^{2}
\end{align*}
Applying Theorem \ref{t1}  we deduce
\begin{equation}\label{key}
\Big|\int_{\Omega}v_t\Big(\frac{N-1}{2}v+x\cdot \nabla
v\Big)dx\Big|_{t=t_0}\leq R_\Omega
E_{v}^{\lambda(N)}(t_0)-C||v(t_0)||_{L^2(\Omega)}^{2},
\end{equation}
 for some constant $C$. Due to the conservation of the
energy and taking $t_0=0$ respectively $t_0=T$ and summing in
\eqref{key} we get
\begin{equation}\label{eq24}
\Big|\int_{\Omega}v_t\Big(\frac{N-1}{2}v+x\cdot \nabla
v\Big)dx\Big|_{t=0}^{t=T}\Big|\leq 2R_\Omega E_{v}^{\lambda(N)}(0)-
C(||v(0)||_{L^2(\Omega)}^{2}+||v(T)||_{L^2(\Omega)}^{2}),
\end{equation}
   From (\ref{eq14}) and (\ref{eq24}) we obtain
\begin{equation}\label{eq25}
(T-2 R_\Omega)E_{v}^{\lambda(N)}(0)\leq
\frac{1}{2}\int_{0}^{T}\int_{\Gamma_0}(x\cdot \nu)\Big(\frac{\p
v}{\p \nu}\Big)^2d \sigma
dt+C(||v(0)||_{L^2}^{2}+||v(T)||_{L^2}^{2}).
\end{equation}
{\bf Step 2. } To get rid of the remainder term at the right hand
side of (\ref{eq25}) we need the following lemma.
\begin{lema}\label{l6}
There exists a positive constant $C=C(T, \Omega)>0$ such that
\begin{equation}\label{eq26}
||v(0)||_{L^2(\Omega)}^{2}+||v(T)||_{L^2(\Omega)}^{2}\leq
C\int_{0}^{T}\int_{\Gamma_0}(x\cdot \nu)\Big(\frac{\p v}{\p
\nu}\Big)^2d \sigma dt
\end{equation}
for all finite energy solution of (\ref{eq62}).
\end{lema}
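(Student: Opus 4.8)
The plan is to run the classical \emph{compactness--uniqueness} argument of Lions (cf. \cite{lions1}, \cite{mach}), the only genuinely new ingredient being a unique continuation statement that tolerates the inverse--square potential; for the latter I would recycle, almost verbatim, the Jerison--Kenig reduction already carried out in the proof of the non-existence part of Theorem \ref{th1}. Argue by contradiction. If \eqref{eq26} fails there is a sequence of finite--energy solutions $v^n$ of \eqref{eq2}, with data $(v_0^n,v_1^n)\in H_\la\times L^2(\Omega)$, normalized by $||v^n(0)||_{L^2(\Omega)}^2+||v^n(T)||_{L^2(\Omega)}^2=1$ and with $\int_{0}^{T}\int_{\Gamma_0}(x\cdot\nu)(\p v^n/\p\nu)^2 d\sigma dt\to 0$. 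Inserting this into the partial observability estimate \eqref{eq25} and using $T>2R_\Omega$ shows that $E_{v^n}^{\la(N)}(0)=\tfrac12\big(||v_0^n||_{H_\la}^2+||v_1^n||_{L^2(\Omega)}^2\big)$ stays bounded, so, along a subsequence, $(v_0^n,v_1^n)\rightharpoonup(v_0,v_1)$ weakly in $H_\la\times L^2(\Omega)$. Using the compact embedding $H_\la\hookrightarrow L^2(\Omega)$ (already invoked in the proof of Theorem \ref{th1}, and which follows from Theorem \ref{tuu8} together with the Poincaré-type bound $||u||_{L^2(\Omega)}\lesssim||u||_{H_\la}$ coming from \eqref{oeq3}), together with the weak continuity of the bounded linear solution map $(v_0,v_1)\mapsto v(t)$ granted by Proposition \ref{p3}, we get $v^n(0)\to v_0$ and $v^n(T)\to v(T)$ strongly in $L^2(\Omega)$, where $v$ is the solution with data $(v_0,v_1)$. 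Hence $||v_0||_{L^2(\Omega)}^2+||v(T)||_{L^2(\Omega)}^2=1$, so $v\not\equiv0$.

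On the other hand, by Theorem \ref{Wmult} the map $(v_0,v_1)\mapsto\big(\int_0^T\int_{\Gamma_0}(x\cdot\nu)(\p v/\p\nu)^2 d\sigma dt\big)^{1/2}$ is a continuous seminorm on $H_\la\times L^2(\Omega)$, hence weakly lower semicontinuous; therefore $\int_0^T\int_{\Gamma_0}(x\cdot\nu)(\p v/\p\nu)^2 d\sigma dt=0$, i.e. $\p v/\p\nu=0$ on $\Gamma'\times(0,T)$ with $\Gamma':=\{x\in\Gamma:\ x\cdot\nu>0\}\subset\Gamma_0$. The set $\Gamma'$ is nonempty and lies at positive distance from the origin: at a point where $|x|$ attains its maximum $R_\Omega$ the outward normal is radial, so $x\cdot\nu=R_\Omega>0$ there, while near $0$ one has $x\cdot\nu=O(|x|^2)$ by \eqref{cond}. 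Thus the space $N$ of such ``invisible'' finite--energy solutions is nontrivial; repeating the compactness step above one sees that on $N$ the full $H_\la\times L^2(\Omega)$--norm is controlled by the lower--order quantity $(||v(0)||_{L^2}^2+||v(T)||_{L^2}^2)^{1/2}$, whose unit ball is relatively compact for the $H_\la\times L^2(\Omega)$--topology, so $\dim N<\infty$. Since $N$ is invariant under time differentiation (equivalently under the conservative wave group $\mathcal{A}_\la$), the restriction of $D_t$ to $N$ has an eigenvector, which forces $v(t,x)=e^{\mu t}\phi(x)$ for some $\mu\in\cx$ and $\phi\in H_\la\setminus\{0\}$, whence
\begin{equation*}
-\D\phi-\la\frac{\phi}{|x|^2}=-\mu^2\phi\ \textrm{ in }\Omega,\qquad \phi=0\ \textrm{ on }\Gamma,\qquad \frac{\p\phi}{\p\nu}=0\ \textrm{ on }\Gamma'.
\end{equation*}

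It then remains to kill $\phi$, and this is exactly the configuration handled in the proof of Theorem \ref{th1}: extend $\Omega$ across the open piece $\Gamma'$ by a bounded open set $\Omega_1$, put $\tilde{\Omega}:=\Omega\cup\Omega_1$, and prolong $\phi$ by zero. Because $0\notin\overline{\Gamma'}$, on $\tilde{\Omega}_\eps=\tilde{\Omega}\setminus B_\eps(0)$ the potential $V:=\la/|x|^2-\mu^2$ lies in $L^\omega_{\textrm{loc}}(\tilde{\Omega}_\eps)$ for some $\omega>N/2$, the zero extension $\tilde\phi$ belongs to $H^2_{\textrm{loc}}(\tilde{\Omega}_\eps)$ and solves $-\D\tilde\phi=V\tilde\phi$ while vanishing on the open set $\Omega_1$; by the strong unique continuation theorem of Jerison and Kenig \cite{kenig}, $\tilde\phi\equiv0$ on $\tilde{\Omega}_\eps$, hence $\phi\equiv0$ on $\Omega_\eps$ for every $\eps>0$, i.e. $\phi\equiv0$. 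This contradicts $\phi\neq0$, so $N=\{0\}$; but in the contradiction hypothesis $v\in N$ while $v\neq0$, and this final contradiction proves \eqref{eq26}.

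The step I expect to be the main obstacle is the unique continuation near the singular pole: the Jerison--Kenig result only applies away from $x=0$, so it is essential that $\Gamma_0$ contains a relatively open portion of $\Gamma$ bounded away from the origin — this has to be matched against the geometric hypothesis \eqref{cond} — and one then has to propagate $\phi\equiv0$ from $\Omega\setminus B_\eps(0)$ down to $\eps=0$ by exhaustion. A secondary, more routine difficulty is the functional bookkeeping around $N$: the $D_t$-invariance and the finite--dimensionality must be run in the correct scale of spaces (using the smooth elements $D(\mathcal{A}_\la)\times H_\la$ and density, as in \cite{lions1}), after which producing the eigenfunction is standard.
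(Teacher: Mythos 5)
Your argument is correct and shares the paper's compactness--uniqueness skeleton (same normalization, same use of \eqref{eq25} to bound the energy, same weak lower semicontinuity of the boundary seminorm and compactness in $L^2$ to produce a nontrivial ``invisible'' limit $v$), but the uniqueness step is genuinely different. The paper disposes of $v$ in one stroke: away from the origin the coefficient $\la/|x|^2$ is bounded and time-independent, hence analytic in $t$, so Holmgren/Tataru unique continuation for the wave operator gives $v\equiv 0$ on $\Omega\setminus B_\eps(0)$ for every $\eps>0$ and hence on $\Omega$. You instead run the classical spectral reduction: the space $N$ of invisible finite-energy solutions is finite dimensional (via \eqref{eq25} the energy is controlled by the $L^2$ traces at $t=0,T$, whose unit ball is compact), $D_t$-invariant, hence contains a separated solution $e^{\mu t}\phi$, and $\phi$ is then killed by the elliptic Jerison--Kenig extension argument already set up in the proof of Theorem \ref{th1}. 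Your route is longer and requires the usual care with the scale of spaces for the $D_t$-invariance (which you correctly flag and which is standard after Lions), but it buys two things: it recycles machinery the paper has already built in Section \ref{8sec}, and it replaces the evolutionary unique continuation theorem by a purely elliptic one. Two small points: $\Gamma'=\{x\in\Gamma:\ x\cdot\nu>0\}$ need not itself stay away from the origin (the hypothesis $x\cdot\nu=O(|x|^2)$ does not force $x\cdot\nu\leq 0$ near $0$), but this is harmless since, exactly as in Theorem \ref{th1}, you only need a compact relatively open piece of $\Gamma'$ with $0$ outside it, which your maximum-of-$|x|$ argument provides; and note that both your proof and the paper's need $T>2R_\Omega$ strictly for \eqref{eq25} to yield the uniform energy bound.
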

Combining Lemma \ref{l6} with \eqref{eq25}, the Observability
inequality is finally proved.
\end{proof}

\begin{proof}[Proof of Lemma \ref{l6}]
We apply a classical  compactness-uniqueness argument. Suppose by
contradiction that (\ref{eq26}) does not hold. Then there exists a
sequence $(v_{0}^{n},v_{1}^{n})$ of initial data such that the
corresponding solution $v^n$ verifies
$$\frac{||v^n(0)||_{L^2(\Omega)}^{2}+
||v^n(T)||_{L^2(\Omega)}^{2}}{\int_{0}^{T} \int_{\Gamma_0}(x\cdot
\nu)\Big(\frac{\p v^n}{\p \nu}\Big)^2 d\sigma dt}\rightarrow
\infty.$$ Normalizing we may suppose that
\begin{equation}\label{eq92}
||v^{n}(0)||_{L^2(\Omega)}^2+||v^{n}(T)||_{L^2(\Omega)}^{2}=1, \quad
\int_{0}^{T} \int_{\Gamma_0}(x\cdot \nu)\Big(\frac{\p v^n}{\p
\nu}\Big)^2 d\sigma dt\rightarrow 0.
\end{equation}
 From (\ref{eq25}) we deduce that the corresponding energy is
 uniformly bounded. In particular,  we deduce that
 $v^n$ is uniformly bounded in
 $$C([0,T]; H_{\lambda(N)})\cap C^1([0,T]; L^2(\Omega)).$$

 Therefore, by extracting a subsequence
\begin{equation}\label{eq91}
v^{n}\rightharpoonup v \textrm{ in } L^\infty(0,T;H_{\lambda(N)})
\textrm{ weakly-}\star,
\end{equation}
From Theorem \ref{trace} we obtain
$$\frac{\p v^n}{\p \nu}\sqrt{x\cdot \nu}
\rightharpoonup \frac{\p v}{\p \nu} \sqrt{x\cdot \nu} \textrm{ in }
 L^\infty(0,T;L^2(\Gamma_0)) \textrm{ weakly-}\star.$$
Furthermore, by lower semicontinuity,
$$0\leq \int_{0}^{T} \int_{\Gamma_0}(x\cdot \nu)
\Big(\frac{\p v}{\p \nu}\Big)^2 d\sigma dt \leq
\liminf_{n\rightarrow \infty} \int_{0}^{T} \int_{\Gamma_0}(x\cdot
\nu)\Big(\frac{\p v^n}{\p \nu}\Big)^2
 d\sigma dt=0.$$
Hence $$\int_{0}^{T}\int_{\Gamma_0}(x\cdot \nu)\Big(\frac{\p v}{\p
\nu}\Big)^2d\sigma dt=0,$$ and
\begin{equation}\label{eq27}
(x\cdot \nu)\frac{\p v}{\p\nu}=0, \quad \textrm{ a.e. on }
\Gamma_0,\quad \forall t\in [0,T].
\end{equation}
On the other hand, from compactness we deduce that
$$v^{n}\rightarrow v \textrm{ in } L^\infty(0,T;L^2(\Omega)),$$
 which combined with (\ref{eq92}) yield to
\begin{equation}\label{eq28}
||v_0||_{L^2(\Omega)}^{2}+||v(T)||_{L^2(\Omega)}^{2}=1.
\end{equation}
To end the proof of Lemma \ref{l6} it suffices to observe that
(\ref{eq27})-(\ref{eq28}) lead to a contradiction. Indeed, in view
of (\ref{eq27}) and by Holmgreen unique continuation we deduce that
$v\equiv 0$ in $\Omega$ which is in contradiction with (\ref{eq28}).
\end{proof}

\begin{obs}
Unique continuation results may be applied far from origin where
coefficient of the lower order term of the operator
$-\p_{tt}-\D-\la/|x|^2$ is analytic in time (actually, it is
independent of time and bounded in space). The principal part
coincides with the D'Alambertian operator, then one can apply
Homlgreen's unique continuation to get $v=0$ a.e. in $\Omega
\setminus B(0,\eps)$ for any $\eps>0$. In consequence, we will have
$v\equiv 0$ in $\Omega$, see \cite{Tataru1}.
\end{obs}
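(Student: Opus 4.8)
The plan is to run the classical compactness--uniqueness argument. Arguing by contradiction, suppose \eqref{eq26} fails; then there is a sequence of finite--energy solutions $v^n$ of the adjoint system \eqref{eq2} which, after normalisation, satisfies
$$\|v^n(0)\|_{L^2(\Omega)}^2+\|v^n(T)\|_{L^2(\Omega)}^2=1,\qquad \int_0^T\int_{\Gamma_0}(x\cdot\nu)\Big(\frac{\p v^n}{\p\nu}\Big)^2 d\sigma\, dt\longrightarrow 0.$$
First I would insert this into the partial observability estimate \eqref{eq25}, which still carries the lower--order $L^2$ remainder: since $T>2R_\Omega$ its right--hand side stays bounded, so the energies $E_{v^n}^{\la(N)}(0)$ are bounded, and therefore the $v^n$ are bounded in $C([0,T];H_{\la(N)})\cap C^1([0,T];L^2(\Omega))$. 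Extracting a subsequence, $v^n\rightharpoonup v$ weakly-$\star$ in $L^\infty(0,T;H_{\la(N)})$ and $v^n_t\rightharpoonup v_t$ weakly-$\star$ in $L^\infty(0,T;L^2(\Omega))$, and the limit $v$ solves the same adjoint equation with homogeneous Dirichlet data.

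Next I would pass to the limit in the two relevant quantities. On the boundary, the hidden--regularity bound of Theorem \ref{Wmult} (equivalently, the trace inequality of Theorem \ref{trace} integrated in time) shows that $\sqrt{x\cdot\nu}\,\p v^n/\p\nu$ is bounded in $L^2((0,T)\times\Gamma_0)$, so it converges weakly along the subsequence to $\sqrt{x\cdot\nu}\,\p v/\p\nu$; lower semicontinuity of the $L^2$ norm then forces $\int_0^T\int_{\Gamma_0}(x\cdot\nu)(\p v/\p\nu)^2\,d\sigma\,dt=0$, that is $\p v/\p\nu=0$ a.e. on $\{x\in\Gamma_0:\ x\cdot\nu>0\}$ for all $t\in[0,T]$. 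In the interior, the compactness of the embedding $H_{\la(N)}\hookrightarrow L^2(\Omega)$ (a consequence of Theorem \ref{tuu8}) combined with an Aubin--Lions argument gives $v^n\to v$ strongly in $C([0,T];L^2(\Omega))$; together with the normalisation this yields $\|v(0)\|_{L^2(\Omega)}^2+\|v(T)\|_{L^2(\Omega)}^2=1$, so $v\not\equiv 0$.

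It then remains to prove $v\equiv 0$, which produces the contradiction, and this is the step I expect to be the main obstacle. On the set $\omega:=\{x\in\Gamma_0:\ x\cdot\nu>0\}$, which is relatively open in $\Gamma$ and nonempty (since $\int_\Gamma x\cdot\nu\,d\sigma=N|\Omega|>0$), the function $v$ satisfies simultaneously $v=0$ and $\p v/\p\nu=0$ for every $t\in[0,T]$. Crucially $\omega$ lies at positive distance from the singular pole $x=0$, so on a neighbourhood of $(0,T)\times\omega$ the operator $-\p_{tt}-\D-\la/|x|^2$ has smooth (indeed time--independent and bounded) zeroth--order coefficient while its principal part is the d'Alembertian; hence Holmgren's theorem, or the sharp unique continuation of Tataru \cite{Tataru1}, applies. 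By propagation of zeros it yields $v\equiv 0$ on $(0,T)\times(\Omega\setminus B(0,\eps))$ for every $\eps>0$ --- here one uses that $T$ is large, consistent with $T>2R_\Omega$, so the geometric requirements of the unique continuation over $(0,T)$ are fulfilled. Letting $\eps\downarrow 0$ gives $v\equiv 0$ in $(0,T)\times\Omega$, contradicting $v\not\equiv 0$ and completing the proof. The delicate point is precisely that one cannot propagate uniqueness through the origin: the argument must be carried out on the truncated domains $\Omega\setminus B(0,\eps)$, the isolated boundary point $\{0\}$ being discarded at the end because it cannot support a nontrivial finite--energy solution of the wave equation.
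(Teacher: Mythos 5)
Your argument is correct and follows essentially the same route as the paper: after running the compactness--uniqueness normalisation (the paper's Lemma \ref{l6}), you apply Holmgren/Tataru unique continuation on the truncated domains $\Omega\setminus B(0,\eps)$, where the zeroth-order coefficient $\la/|x|^2$ is time-independent and bounded and the principal part is the d'Alembertian, and then let $\eps\downarrow 0$. The only difference is one of presentation: you reconstruct the full contradiction argument around the remark, whereas the paper isolates the unique-continuation justification in this observation, but the key points --- working away from the singular pole, using analyticity in time of the potential, and discarding the single point $x=0$ at the end --- coincide.
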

\subsection{The Schr\"{o}dinger equation}\label{6sec}

In this section we consider the Schr\"{o}dinger-like equation

\begin{equation}\label{Seq124Sb}\left\{\begin{array}{ll}
  iu_{t}-\Delta u-\lambda \frac{u}{|x|^2}=0, & (t,x)\in Q_T, \\
  u(t,x)=h(t,x), & (t,x)\in (0,T)\times \Gamma_0, \\
  u(t,x)=0, & (t,x)\in (0,T)\times (\Gamma\setminus \Gamma_0), \\
  u(0,x)=u_0(x), &  x\in \Omega, \\
\end{array}\right.
\end{equation}
Moreover,  we assume $\Omega\subset \rr^N$, $N\geq 1$, is a smooth
bounded domain satisfying case C1 and $\la\leq \la(N):=N^2/4$.
 For the Schr\"{o}dinger equation we define the Hilbert
spaces $L^2(\Omega; \mathbb{C})$ and $H_0^1(\Omega; \mathbb{C})$
endowed with the inner products
$$<u, v>_{L^2(\Omega; \mathbb{C})}:=\textrm{Re}\into u(x)\overline{v(x)}dx, \q
\forall u, v\in L^2(\Omega; \mathbb{C}), $$
$$<u, v>_{H_0^1(\Omega; \mathbb{C})}:=\textrm{Re}\into \n u(x)\cdot \n \overline{v(x)}
dx, \q \forall u, v\in H_0^1(\Omega; \mathbb{C}).$$
 For all $\la \leq
\la(N)$,  we also define the Hilbert space $H_\la (\Omega;
\mathbb{C})$ as the completion of $H_0^1 (\Omega; \mathbb{C})$ with
respect to the norm associated with the inner product
\begin{equation}\label{Scomplexproduct}
<u, v>_{H_\la(\Omega; \mathbb{C})}:=\textrm{Re}\into \big(\n
u(x)\cdot \n \overline{v(x)}-\la
\frac{u(x)\overline{v(x)}}{|x|^2}\big) dx, \q \forall u, v\in
H_0^1(\Omega; \mathbb{C}).
\end{equation}
The spaces $L^2(\Omega; \mathbb{C})$, $H_0^1(\Omega; \mathbb{C})$,
$H_\la(\Omega; \mathbb{C})$ inherit the properties of the
corresponding real spaces. In order to simplify the notations, we
will write $L^2(\Omega), \hoi, H_\la $ 
without making confusions.

As shown for the wave equation, the system \eqref{Seq124Sb} is well
posed.

\begin{Th}[see \cite{judith}]\label{SWt2}
 Let $T>0$ be given and assume $\lambda\leq \la(N)$. For every
$u_0\in H_{\lambda}^{'}$ and any $h\in L^2((0,T)\times \Gamma_0)$
the system \eqref{Seq124Sb} is  well-posed, i.e. there exists a
unique weak solution such that
\begin{equation*}\label{eq20}
u\in C([0,T];  H_{\lambda}^{'}).
\end{equation*}
Moreover, there exists constant $C>0$ such that the solution of
$(S_\la)$ satisfies
\begin{equation*}\label{eq21}
||u||_{L^\infty(0,T;  H_{\lambda}^{'})}\leq C(||u_0||_{
H_{\lambda}^{'}}+||h||_{L^2((0,T)\times \Gamma_0)}).
\end{equation*}
\end{Th}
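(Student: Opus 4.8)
The plan is to construct the solution by the transposition method of J.-L. Lions, exactly as in the wave case (Definition~\ref{d1}, Theorems~\ref{t2} and \ref{Wmult}) and as carried out for the interior singularity by Vancostenoble and Zuazua~\cite{judith}. Thus I would reduce the statement to two ingredients: (i) the well-posedness of the adjoint backward problem in the energy space $H_\la$, and (ii) a hidden-regularity estimate for the normal derivative of the adjoint state. Concretely, for $f$ in a suitable dense class (say $f\in L^1(0,T;H_\la)$) I would let $z$ solve
\begin{equation}\label{propSadj}
\left\{\begin{array}{ll}
i z_t - \D z - \la \frac{z}{|x|^2} = f, & (t,x)\in Q_T,\\
z(t,x)=0, & (t,x)\in \Sigma_T,\\
z(T,x)=0, & x\in\Omega,
\end{array}\right.
\end{equation}
and \emph{define} the weak solution $u$ of \eqref{Seq124Sb} through the identity
\begin{equation}\label{propSweak}
\int_{0}^{T}\into u\,\overline{f}\,dx\,dt = i\langle u_0, z(0)\rangle_{H_\la', H_\la} - \int_{0}^{T}\int_{\Gamma_0} h\,\overline{\frac{\p z}{\p \nu}}\,d\sigma\,dt ,
\end{equation}
obtained formally by multiplying \eqref{Seq124Sb} by $\overline z$ and integrating by parts over $Q_T$. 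Once the right-hand side of \eqref{propSweak} is shown to be a bounded (conjugate-)linear functional of $f$ with the appropriate norm, the Riesz representation theorem produces $u$ together with the desired estimate.

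For ingredient (i), I would record that, by the Hardy inequality \eqref{oeq3}, the operator $A_\la=-\D-\la/|x|^2$ with domain $D(A_\la)$ (see \eqref{domeniu}) is a non-negative self-adjoint operator on $L^2(\Omega;\cx)$ with form domain $H_\la$; hence by Stone's theorem the Schr\"odinger group $(e^{-itA_\la})_{t\in\rr}$ is unitary on $L^2(\Omega;\cx)$ and restricts to an isometry group on each space of the scale $D(A_\la)\subset H_\la\subset L^2(\Omega)\subset H_\la'\subset D(A_\la)'$. By Duhamel's formula this yields, for $f\in L^1(0,T;H_\la)$, a unique $z\in C([0,T];H_\la)$ solving \eqref{propSadj} with $\|z\|_{C([0,T];H_\la)}\lesssim \|f\|_{L^1(0,T;H_\la)}$, and, when $f\in L^1(0,T;D(A_\la))$, the extra regularity $z\in C([0,T];D(A_\la))\cap C^1([0,T];H_\la)$ --- the Schr\"odinger analogue of Proposition~\ref{p3}. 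In particular the first term in \eqref{propSweak} is controlled by $\|u_0\|_{H_\la'}\|f\|_{L^1(0,T;H_\la)}$.

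Ingredient (ii) is the delicate point and the one that genuinely uses the boundary-singularity machinery of Section~\ref{secpoho}. I would establish the hidden regularity of $\p z/\p\nu$ exactly as in Theorem~\ref{Wmult}: apply the Pohozaev identity of Theorem~\ref{poho} (and the weighted trace bound of Theorem~\ref{trace}) to $A_\la z(\cdot,t)=iz_t(\cdot,t)-f(\cdot,t)$ for a.e.\ $t\in(0,T)$, integrate in time, and use the multiplier $x\cdot\n\overline z+\frac{N}{2}\overline z$ together with the conservation of the $L^2$- and $H_\la$-norms along the homogeneous flow. This should give
\begin{equation}\label{propHidden}
\int_{0}^{T}\int_{\Gamma}(x\cdot\nu)\Big|\frac{\p z}{\p\nu}\Big|^2 d\sigma\,dt \;\lesssim\; \int_{0}^{T}\int_{\Gamma}|x|^2\Big|\frac{\p z}{\p\nu}\Big|^2 d\sigma\,dt \;\lesssim\; \|z\|_{C([0,T];H_\la)}^2 + \|f\|_{L^1(0,T;H_\la)}^2 .
\end{equation}
The main obstacle is precisely to push these estimates down to the pole $x=0$: standard trace theory fails there, and one must rely on the weighted bound \eqref{ineqtrace} and on the sharp decay of $z$ and $\n z$ near the origin (Lemma~\ref{unifbound}) to justify the integrations by parts in the multiplier identity and the integrability of the boundary term, using also the condition $x\cdot\nu=O(|x|^2)$ from \eqref{cond} to reconcile the weight $x\cdot\nu$ on $\Gamma_0$ with the natural weight $|x|^2$. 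As in \cite{judith}, it is here that the improved constant $N^2/4$ (rather than $(N-2)^2/4$) is essential.

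Finally I would assemble the pieces. Combining \eqref{propSweak} with the bound on the first term and \eqref{propHidden}, the right-hand side of \eqref{propSweak} is a bounded functional on $L^1(0,T;H_\la)$ with norm $\lesssim \|u_0\|_{H_\la'}+\|h\|_{L^2((0,T)\times\Gamma_0)}$; hence there is a unique $u\in L^\infty(0,T;H_\la')$ satisfying \eqref{propSweak} for all admissible $f$, and
$$\|u\|_{L^\infty(0,T;H_\la')}\lesssim \|u_0\|_{H_\la'}+\|h\|_{L^2((0,T)\times\Gamma_0)} .$$
Uniqueness of the weak solution is immediate from \eqref{propSweak} by density of the test functions. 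The time-continuity $u\in C([0,T];H_\la')$ follows by first treating smooth data $(u_0,h)$, for which the semigroup representation of the solution of \eqref{Seq124Sb} gives directly $u\in C([0,T];H_\la')$, and then passing to the limit using the uniform bound above; time-reversibility of the Schr\"odinger flow makes the argument symmetric in $t$, so that in particular $u(0)=u_0$ is recovered in $H_\la'$, which completes the proof.
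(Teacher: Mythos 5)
Your proposal is correct and follows essentially the same route the paper takes (and merely sketches, deferring to \cite{judith}): well-posedness by the transposition method as in Definition~\ref{d1} and Theorem~\ref{t2}, with the key boundary term controlled by the hidden-regularity estimate of Lemma~\ref{Smult}, itself obtained from the Pohozaev identity of Theorem~\ref{poho} and the weighted trace bound of Theorem~\ref{trace} applied to $A_\la z=iz_t-f$ for regular data and then extended by density.
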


The system \eqref{Seq124Sb} is also controllable.   More precisely,
the control result states as follows.

\begin{Th}\label{SSht1}
The system $(S_\la)$ is  controllable for any $\lambda\leq \la(N)$.
More precisely,  for any time $T>0$, $u_0\in H_\la^{'}$ and
$\overline{u_0} \in  H_{\lambda}^{'}$ there exists $h\in
L^2((0,T)\times\Gamma_0)$ such that the solution of ($S_\la$)
satisfies
\begin{equation*}u(T,x)=
\overline{u_0}(x) \quad \textrm{ for all } x\in \Omega.
\end{equation*}
\end{Th}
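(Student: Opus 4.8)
The plan is to establish Theorem~\ref{SSht1} by the Hilbert Uniqueness Method, following the scheme already used for the wave equation in Subsection~\ref{9sec}; the only genuinely new feature is that for the Schr\"odinger flow the control time $T>0$ may be taken arbitrarily small, since there is no finite speed of propagation. As the equation is time-reversible it suffices to treat the null-controllability problem $\overline{u_0}=0$. Arguing as in Definition~\ref{d1} and by the classical duality of HUM, exact controllability of \eqref{Seq124Sb} in time $T$ is equivalent to the \emph{observability inequality}
\begin{equation}\label{SObs}
||v_0||_{H_\la}^2\ \lesssim\ \int_{0}^{T}\int_{\Gamma_0}(x\cdot\nu)\Big|\frac{\p v}{\p\nu}\Big|^2\,d\sigma\,dt
\end{equation}
for the adjoint system $iv_t-\D v-\la v/|x|^2=0$ in $Q_T$, $v=0$ on $\Sigma_T$, $v(0)=v_0\in H_\la$, where $\Gamma_0$ is given by \eqref{oeq2}. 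On $\Gamma\setminus\Gamma_0$ one has $x\cdot\nu<0$, hence $(x\cdot\nu)|\p v/\p\nu|^2\le 0$ there, so $\Gamma$ may be replaced by $\Gamma_0$ in any upper bound for $\int_\Gamma(x\cdot\nu)|\p v/\p\nu|^2$.

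First I would record the conservation laws: $-iA_\la$ generates a unitary group on $L^2(\Omega)$ and on $H_\la$, so $||v(t)||_{H_\la}=||v_0||_{H_\la}$ and $||v(t)||_{L^2(\Omega)}=||v_0||_{L^2(\Omega)}$ for all $t$, and if $v_0\in D(A_\la)$ also $||v_t(t)||_{L^2(\Omega)}=||A_\la v_0||_{L^2(\Omega)}$. Next, for $v_0\in D(A_\la)$ I would apply the Pohozaev identity of Theorem~\ref{poho} to $v(t)$ with $A_\la v(t)=-iv_t(t)$ for a.e.\ $t\in(0,T)$ and integrate in $t$; combined with Theorem~\ref{trace}, the conservation laws and manipulations analogous to those in the proof of Theorem~\ref{Wmult}, this yields on one hand the hidden-regularity estimate
\begin{equation}\label{Shidden}
\int_{0}^{T}\int_{\Gamma}(x\cdot\nu)\Big|\frac{\p v}{\p\nu}\Big|^2 d\sigma\,dt\ \lesssim\ \int_{0}^{T}\int_{\Gamma}\Big|\frac{\p v}{\p\nu}\Big|^2|x|^2 d\sigma\,dt\ \lesssim\ ||v_0||_{H_\la}^2,
\end{equation}
which makes the right-hand side of \eqref{SObs} meaningful for every $v_0\in H_\la$ (the passage from $D(A_\la)$ to $H_\la$ being by density), and on the other hand the multiplier identity
\begin{equation}\label{Smult}
\frac12\int_{0}^{T}\int_{\Gamma}(x\cdot\nu)\Big|\frac{\p v}{\p\nu}\Big|^2 d\sigma\,dt\ =\ T\,||v_0||_{H_\la}^2\ +\ \textrm{Im}\int_{\Omega}\overline{v}\,\big(x\cdot\n v+\tfrac{N}{2}v\big)\,dx\,\Big|_{0}^{T},
\end{equation}
the precise universal constants being a routine computation.

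The crucial point is that the interior term in \eqref{Smult} is a genuine lower-order perturbation, so that \eqref{SObs} holds for \emph{every} $T>0$. Indeed, by Cauchy--Schwarz, Young's inequality and Theorem~\ref{tu8} (using $\la\le\la(N)$ to bound the bracket there from above by $||v||_{H_\la}^2$), for any $\delta>0$
$$\Big|\textrm{Im}\int_{\Omega}\overline v\big(x\cdot\n v+\tfrac{N}{2}v\big)dx\Big|\ \le\ \frac{\delta}{2}\int_{\Omega}|x|^2|\n v|^2 dx+C_\delta\int_{\Omega}v^2 dx\ \le\ \frac{\delta R_\Omega^2}{2}||v||_{H_\la}^2+C_\delta'||v||_{L^2(\Omega)}^2 ,$$
and since $||v||_{H_\la}$ and $||v||_{L^2(\Omega)}$ are conserved, evaluating at $t=0,T$ and inserting into \eqref{Smult} gives
$$(T-\delta R_\Omega^2)\,||v_0||_{H_\la}^2\ \le\ \frac12\int_{0}^{T}\int_{\Gamma_0}(x\cdot\nu)\Big|\frac{\p v}{\p\nu}\Big|^2 d\sigma\,dt\ +\ C_\delta''\,||v_0||_{L^2(\Omega)}^2 .$$
Given $T>0$ one chooses $\delta<T/R_\Omega^2$, and it only remains to absorb the lower-order term $||v_0||_{L^2(\Omega)}^2$. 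This is done by the compactness--uniqueness argument, verbatim as in Lemma~\ref{l6}: normalize a putative sequence of counterexamples, use \eqref{Shidden} and the compact embedding $H_\la\hookrightarrow L^2(\Omega)$ (as in the proof of Theorem~\ref{th1}) to extract a weak limit $v$ with $(x\cdot\nu)\p v/\p\nu=0$ on $\Gamma_0$ and $||v(0)||_{L^2(\Omega)}^2+||v(T)||_{L^2(\Omega)}^2=1$, and then apply unique continuation for the Schr\"odinger operator \emph{away from the origin}, where $1/|x|^2$ is smooth and bounded, to obtain $v\equiv 0$ --- a contradiction. This proves \eqref{SObs} for all $T>0$, whence Theorem~\ref{SSht1} by HUM.

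I expect the main difficulties to be the same two that already arose in the wave case: (i) making sense of the boundary observation, i.e. the integrability of $|x|^2|\p v/\p\nu|^2$ on $\Gamma$ despite the singularity lying on the boundary, which is precisely what Theorems~\ref{trace}--\ref{poho} deliver once applied slicewise in time to $A_\la v=-iv_t$; and (ii) the unique continuation step, which must be carried out on $\Omega\setminus B_\eps(0)$ followed by $\eps\to0$. An alternative, bypassing the Schr\"odinger multiplier computation, would be to deduce \eqref{SObs} from the wave observability inequality of Theorem~\ref{t1} by the control-transmutation method, which automatically upgrades finite-time controllability of the wave equation to controllability of the Schr\"odinger equation in arbitrarily small time; that route would still require \eqref{Shidden} in order to give meaning to the observed quantity.
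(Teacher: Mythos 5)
Your proposal is correct and follows essentially the same route the paper intends: HUM reduces the problem to the observability inequality \eqref{SSbObsIneq}, which is obtained from the hidden regularity and multiplier identity of Lemma \ref{Smult} (itself derived by applying Theorems \ref{trace} and \ref{poho} slicewise with $A_\la v=-iv_t$), with the remainder handled exactly as in \cite{judith} via the sharp bound of Theorem \ref{tu8} and a compactness--uniqueness argument; your extra term $\tfrac N2 v$ in the multiplier is harmless since $\mathrm{Im}\int_\Omega \overline v\, v\,dx=0$. Your closing remark on deducing the Schr\"odinger observability from the wave result corresponds to the paper's own remark invoking the abstract theory of \cite{tuch}.
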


As discussed in Subsection \ref{9sec}, the controllability  is
equivalent to the Observability inequality for the solution of the
adjoint system
\begin{equation}\label{Seq2}\left\{\begin{array}{ll}
  i v_{t}+\Delta v+\lambda \frac{v}{|x|^2}=0, & (t,x)\in Q_T, \\
  v(t,x)=0, & (t,x)\in (0,T)\times \Gamma, \\
  v(0,x)=v_0(x), &  x\in \Omega, \\
\end{array}\right.
\end{equation}
More precisely, if $v$ solves \eqref{Seq2}, then for any time $T>0$,
there exists a positive constant $C_T$ such that
\begin{equation}\label{SSbObsIneq}
||v_0||_{H_\la}^{2} \leq C_T \int_{0}^{T}\int_{\Gamma_0}(x\cdot
\nu)\Big|\frac{\p v}{\p \nu}\Big|^2d\sigma dt.
\end{equation}
 Observability \eqref{SSbObsIneq} might be deduced
directly using the multiplier identity stated in Lemma \ref{Smult}.
The proof is let to the reader since it follows the same steps in
\cite{judith}.

\begin{lema}\label{Smult}
Assume $\la \leq \la_\star$ and $v$ is the solution of \eqref{Seq2}
corresponding to the initial data $v_0\in H_\la$. Then
\begin{equation}
\int_{0}^{T}\int_{\Gamma}  \Big|\frac{\p v }{\p \nu}\Big|^2|x|^2
d\sigma dt \lesssim ||v_0||_{H_\la}^{2}
\end{equation}
and $v$ satisfies the identity
$$\frac{1}{2}\int_{0}^{T}\int_{\Gamma}
(x\cdot \nu) \Big|\frac{\p v }{\p \nu}\Big|^2d\sigma dt
=T||v||_{H_\la}^{2} + \frac{1}{2}\textrm{Im}\int_{\Omega} v x\cdot
\nabla \overline{v} dx\Big|_{t=0}^{t=T}.$$
\end{lema}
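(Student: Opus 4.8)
The plan is to mirror the proof of Theorem~\ref{Wmult}, the difference being that along the Schr\"odinger flow the equation reads $A_\la v(t)=iv_t(t)$ in place of $A_\la v(t)=-v_{tt}(t)$. First I would work with regular data $v_0\in D(A_\la)$: since $A_\la$ is a non‑negative self‑adjoint operator with form domain $H_\la$, standard semigroup theory provides a solution $v\in C([0,T];D(A_\la))\cap C^1([0,T];L^2(\Omega))$ of \eqref{Seq2}, so that every manipulation below is licit; the statement for $v_0\in H_\la$ then follows by density, since the final bounds involve only $\|v_0\|_{H_\la}$. Along the way I record the conservation laws obtained by multiplying \eqref{Seq2} by $\bar v$ (resp.\ $\bar v_t$) and separating real and imaginary parts: both $\|v(t)\|_{L^2(\Omega)}$ and $\|v(t)\|_{H_\la}$ are independent of $t$, and $\textrm{Im}\int_\Omega v\,\bar v_t\,dx=\|v(t)\|_{H_\la}^2$.

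For the multiplier identity I would apply Theorem~\ref{poho} to $\textrm{Re}\,v(t)$ and $\textrm{Im}\,v(t)$ and add, which gives for every $t$
\[
\tfrac12\int_\Gamma (x\cdot\nu)\Big|\tfrac{\p v}{\p\nu}\Big|^2 d\sigma
=-\textrm{Re}\int_\Omega (A_\la v)\,\overline{x\cdot\n v}\,dx-\tfrac{N-2}{2}\|v(t)\|_{H_\la}^2
=\textrm{Im}\int_\Omega v_t\,(x\cdot\n\bar v)\,dx-\tfrac{N-2}{2}\|v(t)\|_{H_\la}^2 ,
\]
the last equality being $A_\la v=iv_t$. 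Integrating by parts in space and using $\textrm{div}\,x=N$ one finds $\tfrac{d}{dt}\int_\Omega v\,(x\cdot\n\bar v)\,dx=2i\,\textrm{Im}\int_\Omega v_t\,(x\cdot\n\bar v)\,dx-N\int_\Omega v\,\bar v_t\,dx$, hence by the conservation identity $\textrm{Im}\int_\Omega v_t\,(x\cdot\n\bar v)\,dx=\tfrac12\tfrac{d}{dt}\,\textrm{Im}\int_\Omega v\,(x\cdot\n\bar v)\,dx+\tfrac N2\|v(t)\|_{H_\la}^2$. Substituting, integrating over $[0,T]$ and using $\|v(t)\|_{H_\la}=\|v_0\|_{H_\la}$, the $\|v\|_{H_\la}^2$‑terms combine with coefficient $\tfrac N2-\tfrac{N-2}{2}=1$ and yield precisely the claimed identity.

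For the weighted trace bound I would integrate in time the identity of Lemma~\ref{lem} (applied to $\textrm{Re}\,v$, $\textrm{Im}\,v$ and summed), with a vector field $\vec q\in(C^2(\overline\Omega))^N$ equal to $\nu$ on $\Gamma$. Each purely spatial term on the right of \eqref{lastform} is bounded, by Cauchy--Schwarz and the estimate $\int_\Omega|x|\,|\n u|^2dx\lesssim\|u\|_{H_\la}^2$ used in the proof of Theorem~\ref{trace} (itself coming from the argument of Theorem~\ref{tu8}), by $C\int_0^T\|v(t)\|_{H_\la}^2dt=CT\|v_0\|_{H_\la}^2$, the $\n\theta_\eps$‑contributions vanishing as $\eps\to0$. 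The only term carrying $f=A_\la v=iv_t$ I would treat, as in Theorem~\ref{Wmult}, by integrating by parts in time first; re‑using the equation to remove the time derivative and integrating the Laplacian by parts, it becomes
\[
\tfrac12\,\textrm{Im}\int_\Omega v\,(|x|^2\vec q\cdot\n\bar v)\theta_\eps\,dx\Big|_{t=0}^{t=T}
+\tfrac12\int_0^T\!\!\int_\Omega\Big(\tilde\psi_\eps|\n v|^2+\textrm{Re}\big(v\,\n\tilde\psi_\eps\cdot\n\bar v\big)-\la\,\tilde\psi_\eps\tfrac{|v|^2}{|x|^2}\Big)dx\,dt ,
\]
with $\tilde\psi_\eps=\textrm{div}(|x|^2\vec q)\theta_\eps+|x|^2\vec q\cdot\n\theta_\eps$. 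Since $\tilde\psi_\eps$ is bounded and $O(|x|)$ near the origin, the interior term is $\lesssim\int_0^T\|v(t)\|_{H_\la}^2dt$ (using $\int_\Omega|v|^2/|x|\,dx\lesssim\|v\|_{H_\la}^2$, a consequence of \eqref{oeq3}), the $\n\theta_\eps$‑pieces going to $0$; the time‑endpoint term is $\lesssim\|v_0\|_{L^2}\,\||x|^2\n v\|_{L^2}\lesssim\|v_0\|_{H_\la}^2$ by applying Theorem~\ref{tu8} twice together with $H_\la\hookrightarrow L^2(\Omega)$. Letting $\eps\to0$ and invoking Fatou's lemma yields $\int_0^T\!\int_\Gamma(\p v/\p\nu)^2|x|^2\,d\sigma\,dt\lesssim(1+T)\|v_0\|_{H_\la}^2$, and since $|x\cdot\nu|\le C|x|^2$ on $\Gamma$ by \eqref{cond}, the first chain of inequalities follows.

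The main obstacle is the critical value $\la=\la(N)$, where $H_{\la(N)}\supsetneq H_0^1(\Omega)$, the quantities $\|\n v\|_{L^2}$ and $\int_\Omega v^2/|x|^2$ are not individually dominated by $\|v\|_{H_\la}$, and the $H_{\la(N)}$‑norm is only the improper integral of $B_{\la(N),1}$ near $x=0$; in particular the cross term $\textrm{Re}\int v\,\n\tilde\psi_\eps\cdot\n\bar v$ above contains a bare $\|\n v\|_{L^2}$. As in Step~2 of the proof of Theorem~\ref{poho}, I would overcome this by approximation with the subcritical problems of Lemma~\ref{lemalimit}: prove everything above for $\la=\la(N)-\eps'$ with constants uniform in $\eps'$, then pass to the limit using $v_{\eps'}\to v$ in $H_{\la(N)}$, $\p v_{\eps'}/\p\nu\to\p v/\p\nu$ in $L_{\textrm{loc}}^2(\Gamma\setminus\{0\})$ and a.e.\ on $\Gamma$, and the uniform weighted trace bound (Fatou). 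Throughout, the quantitative input replacing the elliptic regularity unavailable at the boundary singularity is Theorem~\ref{tu8}, which is exactly what guarantees $x\cdot\n v\in L^2(\Omega)$ for $v\in H_\la$ and its stability under these approximations.
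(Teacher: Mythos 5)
Your proposal is correct and follows essentially the paper's intended route: the paper omits the proof of Lemma \ref{Smult}, referring to the wave-equation argument (Theorem \ref{Wmult} and \cite{judith}), and you reconstruct exactly that strategy — the Pohozaev and trace results applied at each fixed time with $A_\la v=i v_t$, the conservation laws together with $\mathrm{Im}\int_\Omega v\,\overline{v_t}\,dx=\|v(t)\|_{H_\la}^2$, integration in time, and regularization via $D(A_\la)$ data plus density (with the subcritical approximation of Lemma \ref{lemalimit} for $\la=\la(N)$). The bookkeeping is right (the $\|v\|_{H_\la}^2$ terms combine with coefficient $\tfrac N2-\tfrac{N-2}{2}=1$), so nothing essential is missing.
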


\begin{obs}
  Besides, the proof of \eqref{SSbObsIneq} can be deduced from
   the result valid for the wave equation.
Indeed, the general theory presented in an abstract form in
\cite{tuch},  assure the observability of systems like $\dot
z=iA_0z$ using results available for systems of the form
$\ddot{z}=-A_0 z$.
\end{obs}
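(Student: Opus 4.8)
The plan is to recast the adjoint wave system \eqref{eq2} and the adjoint Schr\"odinger system \eqref{Seq2} as two conservative evolutions driven by one and the same positive self-adjoint operator $A_\la$ and observed through one and the same boundary operator, and then to quote the abstract transfer principle of \cite{tuch}, which promotes exact observability of the second-order system $\ddot z=-A_\la z$ to exact observability of the Schr\"odinger group $e^{-itA_\la}$ in arbitrarily short time. The only genuine work is to check that the hypotheses of that abstract theorem hold in our singular setting.

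First I would fix the abstract data. On the pivot space $L^2(\Omega)$ let $A_\la$ be the self-adjoint realization of $-\D-\la/|x|^2$ with form domain $H_\la$. The Hardy inequalities \eqref{oeq3}--\eqref{oeq44} make the associated quadratic form positive, and the embedding $H_\la\hookrightarrow L^2(\Omega)$ is compact (as in the proof of Theorem \ref{th1}, via Theorem \ref{tuu8} and Sobolev), so $A_\la$ has compact resolvent and a strictly positive first eigenvalue; in particular $A_\la$ is positive, boundedly invertible, and $D(A_\la^{1/2})=H_\la$. As common observation operator I take the weighted Neumann trace $C v:=\sqrt{x\cdot\nu}\,(\p v/\p\nu)|_{\Gamma_0}$ with values in $Y:=L^2(\Gamma_0)$, so that $\|C v\|_Y^2=\int_{\Gamma_0}(x\cdot\nu)|\p v/\p\nu|^2\,d\sigma$, which is precisely the observed quantity in \eqref{ObsIneq1} and \eqref{SSbObsIneq}.

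Second I would verify the two admissibility requirements and the wave observability that feed the abstract theorem. For the wave group the admissibility of $C$ is the hidden-regularity estimate \eqref{ineqtrace1} of Theorem \ref{Wmult}, and exact observability in time $T_0=2R_\Omega$ is Theorem \ref{t1}. For the Schr\"odinger group the admissibility of the very same $C$ is the first inequality of Lemma \ref{Smult}, since $x\cdot\nu\lesssim|x|^2$ on $\Gamma$ gives $\int_0^T\|C v\|_Y^2\,dt\lesssim\int_0^T\int_\Gamma|\p v/\p\nu|^2|x|^2\,d\sigma\,dt\lesssim\|v_0\|_{H_\la}^2$. With these facts the transfer principle of \cite{tuch} applies verbatim: exact observability of $\ddot z=-A_\la z$ through $C$, together with admissibility of $C$ for $\dot z=iA_\la z$, yields exact observability of the Schr\"odinger group through $C$ in every time $T>0$, namely $\|v_0\|_{H_\la}^2\le C_T\int_0^T\int_{\Gamma_0}(x\cdot\nu)|\p v/\p\nu|^2\,d\sigma\,dt$, which is exactly \eqref{SSbObsIneq}.

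The mechanism behind \cite{tuch} is transmutation: one represents $e^{-itA_\la}$ as an integral of the wave propagator $\cos(s A_\la^{1/2})$ against a fixed kernel, chosen so that the finite wave window $[0,T_0]$ is respected while the Schr\"odinger time $T>0$ stays free; the wave estimate then transfers through this representation, which is why the Schr\"odinger time is unconstrained in contrast with the threshold $T>2R_\Omega$ for the wave. The main obstacle is not this abstract step but the verification that the boundary singularity at $x=0$ does not destroy the hypotheses: one must confirm that $C$ is a bona fide admissible observation operator into $Y$ despite the degeneracy of the weight near the origin, and, in the critical case $\la=\la(N)$, that the identification $D(A_\la^{1/2})=H_\la$ is legitimate even though $H_\la$ is strictly larger than $\hoi$. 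Both points are secured by the trace control $|x|^2(\p v/\p\nu)^2\in L^1(\Gamma)$ of Theorem \ref{trace} and by the functional description of $H_\la$ in Subsection \ref{1subsec}, after which \cite{tuch} delivers \eqref{SSbObsIneq} with $T>0$ arbitrary.
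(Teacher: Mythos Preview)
Your proposal is correct and follows exactly the route the paper's remark points to: the paper gives no proof beyond the single sentence citing \cite{tuch}, and you have supplied the verification that the hypotheses of the abstract transfer theorem are met in this singular setting (self-adjointness and positivity of $A_\la$ via the Hardy inequalities, admissibility of the weighted Neumann observation for both groups via Theorem~\ref{Wmult} and Lemma~\ref{Smult}, and wave observability via Theorem~\ref{t1}).

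One small inaccuracy worth flagging: you attribute the mechanism in \cite{tuch} to transmutation, but the Tucsnak--Weiss transfer (their Section~6.7--6.8) proceeds through the resolvent/Hautus characterization of exact observability rather than through an integral representation of $e^{-itA_\la}$ in terms of the wave propagator; transmutation is the approach of Miller. This does not affect the validity of your argument, since both routes yield the same conclusion---Schr\"odinger observability in arbitrarily short time from wave observability---but if you want to be faithful to the reference actually cited, the justification should go through the wavepacket/resolvent estimate rather than a kernel formula.
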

\section{Open problems}\label{4sec}

\noindent {\bf 1. Geometric constraints.}  In this paper we have
shown the role of the Pohozaev identity, in the context of boundary
singularities,   when  studying  the controllability of conservative
systems like Wave and Schr\"{o}dinger equations. We proved that for
any $\la\leq \la(N)=N^2/4$,  the corresponding systems are exact
observable from $\Gamma_0$ precised in \eqref{oeq2}. Our result
enlarges the range of values $\la\leq (N-2)^2/4$  for which the
control holds, proved firstly in \cite{judith} in the context of
interior singularities.

The geometrical assumption for $\Gamma_0$ is really necessary,
otherwise our proof does not work. Of course, it is still open to be
analysed the case when the central of gravity of $\Gamma_0$ is
centered at a point $x_0$ different by zero, i.e. $\Gamma_0=\{x \in
\Gamma \ |\ (x-x_0)\cdot \nu\geq 0\}$. This choice of $\Gamma_0$
provide some technical difficulties which have been also emphasized
in \cite{judith}. En eventually proof in the case of a such  domain
$\Gamma_0$ should apply a different technique that we have used so
far.

\noindent {\bf 2.  Multipolar singularities}. The same Pohozaev
identity and controllability issues could be address for more
complicated operators, like for instance $L=-\D -V(x)$, where $V(x)$
denotes a multi-particle potential. To the best of our knowledge,
even if there are some important works studying Hardy-type
inequalities for multipolar potentials (see e.g. \cite{MR2379440} et
al.), an accurate analysis is still  to be done. In a forthcoming
work we study two particles systems. Our goal is to analyze the
limit process when one particle collapses to the other. We apply
this both in the context of controllability and the diffusion heat
processes discussing the time decay of solutions.

\section{Appendix: sharp bounds for $||x\cdot \n
v(t)||_{L^2(\Omega)}$}\label{7sec}

\begin{proof}[Proof of Theorem \ref{tu8}]
Without losing the generality it is enough to consider two type of
geometries for $\Omega$ as follows.

G1: The points on $\Gamma$ satisfy $x_N\geq 0$ in the neighborhood
of origin.

G2; The points on $\Gamma$ satisfy $x_N<0$ in the neighborhood of
origin.

In the other intermediate case (when $x_N$ changes sign at origin)
the result valid for case G2 still holds true since we can prove it
for test functions  extended with zero up to a domain satisfying G2.

The proof follows several steps. \\

\noindent {\bf Step 1.} Firstly we show that Theorem \ref{tu8} is
locally true. More precisely, there exists $r_0=r_0(\Omega, N)>0$
small enough, and $C=C(r_0)$ such that
\begin{equation}\label{eqr0}\int_{\Omega_{r_0}}|x|^2|\nabla v|^2dx\leq
R_{\Omega}^{2}\Big[\int_{\Omega_{r_0}}|\nabla
v|^2dx-\frac{N^2}{4}\int_{\Omega_{r_0}}\frac{v^2}{|x|^2}dx\Big]+C(r_0)\int_{\Omega_{r_0}}
v^2dx,
\end{equation}
holds true for any function $v\in C_{0}^{\infty}(\Omega_{r_0})$,
where $\Omega_{r_0}=\Omega \cap B_{r_0}(0)$.

Next we check the validity of Step 1. For that let us consider a
function $\phi$ which satisfies
$$-\D \phi\geq \frac{N^2}{4}\frac{\phi}{|x|^2}, \q \phi >0,  \q \forall x\in \Omega_{r_0},$$
for some positive constant $r_0$. Such a function exists for each
one of the case G1-G2. Indeed, for the case G1 we may consider
$\phi=x_N |x|^{-N/2}$ and for case G2 we can take\\
 $\phi=
d(x)e^{(1-N)d(x)}\Big|\log \frac{1}{|x|}\Big|^{1/2}|x|^{-N/2}$.

Next we introduce  $u$ such that $v=\phi u$. Then we get
$$|\n v|^2 = |\n \phi|^2 u^2 +\phi^2 |\n u|^2 +2 \phi u \n \phi \cdot \n u . $$
Next,  integrating we get
\begin{align}\label{lema2}
\intos |\n v|^2dx &
= \intos |\n u|^2 \phi^2 dx-\intos \frac{\D \phi}{\phi} v^2.
\end{align}
On the other hand, we obtain
\begin{align}\label{eqp1}
\intos |x|^2 |\n v|^2 dx=\intos |x|^2 |\n \phi|^2 u^2 dx +\intos
|x|^2 \phi^2 |\n u|^2 dx+\frac{1}{2}\intos |x|^2 \n (\phi^2) \cdot
\n (u^2) dx
\end{align}
Next we deduce
\begin{align}\label{eqp2}
\frac{1}{2}\intos |x|^2 \n(\phi^2)\cdot \n (u^2) dx&
=-\intos 2 \frac{x\cdot \n \phi}{\phi} v^2dx -\intos |x|^2 |\n
\phi|^2u^2 dx-\intos \frac{ \D \phi}{\phi} |x|^2 v^2 dx .
\end{align}
According to \eqref{eqp1} and \eqref{eqp2} we obtain
\begin{align}\label{eqp3}
\intos |x|^2 |\n v|^2 dx&= \intos |x|^2 \phi^2 |\n u|^2 dx -\intos
\frac{ 2 x \cdot \n \phi}{\phi} v^2dx -\intos  \frac{\D \phi}{\phi}
 |x|^2 v^2 dx.
\end{align}

Let us write
\begin{align}\label{suma}
-\frac{\D \phi}{\phi}= \frac{N^2}{4|x|^2}+P,
\end{align}
where $P\geq 0$ for any $x\in \Omega_{r_0}$. Then from \eqref{lema2}
we have
\begin{align}\label{integr}
\intos |x|^2 \phi^2 |\n u|^2 dx &\leq R_\Omega^2\intos \phi^2 |\n
u|^2= R_\Omega^2  \Big[\intos |\n v|^2 dx+\intos \frac{\D
\phi}{\phi}
v^2 \Big]\nonumber\\
&= R_\Omega^2 \intos \Big[ |\n v|^2-\frac{N^2}{4}\frac{v^2}{|x|^2}
\Big]dx -R_\Omega^2\intos Pv^2dx.
\end{align}
From above and \eqref{eqp3} it follows that
\begin{align}\label{eqp4}
\intos |x|^2 |\n v|^2dx &\leq R_\Omega^2 \intos \Big[ |\n v|^2
-\frac{N^2}{4}\frac{v^2}{|x|^2}\Big]dx - R_\Omega^2 \intos P
v^2dx\nonumber\\
& -2\intos \frac{x\cdot \n \phi}{\phi} v^2 dx + \intos \Big(
\frac{N^2}{4|x|^2}+P\Big)|x|^2 v^2 dx\nonumber\\
& = R_\Omega^2 \intos \Big[ |\n v|^2
-\frac{N^2}{4}\frac{v^2}{|x|^2}\Big]dx +\intos(|x|^2-R_\Omega^2) P
v^2dx\nonumber\\
&-2\intos \frac{x\cdot \n \phi}{\phi} v^2 dx + \frac{N^2}{4} \intos
v^2 dx.
\end{align}
In the case G1 for $r_0$ small enough we have $P=0$ and
   $$\Big|\frac{x\cdot \n \phi}{\phi}\Big|\leq C, \q \forall x\in \Omega_{r_0},$$
 holds   for some positive constant $C$. Thanks to \eqref{eqp4}
   we conclude the proof of Step 1 in the cases G1.

   In the case G2, for $r_0$ small enough we have
   $$P>0, \q \n d\cdot x\geq 0, \q \forall x\in \Omega_{r_0}$$
Then, we remark
$$\frac{x\cdot \n \phi}{\phi}= \frac{x\cdot \n d}{d} +O(1),$$ and
from above we also finish the proof of Step 1 in this case.

\noindent {\bf Step 2.}  This step consist in applying a cut-off
argument to transfer the validity of inequality \eqref{eqr0} from
$\Omega_{r_0}$ to $\Omega$. More precisely, we consider a cut-off
function $\theta\in C_{0}^{\infty}(\Omega)$ such that
\begin{equation}\theta(x)=\left\{\begin{array}{ll}
  1, & |x|\leq r_0/2, \\
  0, & |x|\geq r_0. \\
\end{array}\right.
\end{equation}
Then we split $v\in C_{0}^{\infty}(\Omega)$ as follows,
$$v=\theta v+(1-\theta) v:= w_1+w_2.$$

Next let us firstly prove the following lema.

\begin{lema}\label{l9}
Let us consider a weight function $\rho:
C^{\infty}(\overline{\Omega})\rightarrow \rr$ which is bounded and
non negative. There exists $C(\Omega,\rho)>0$ such that the
following inequality holds
\begin{equation}\label{eq102}
\int_{\Omega}\rho(x)\nabla w_1\cdot\nabla w_2 dx \geq
-C(\Omega,\rho,r)\int_{\Omega}|v|^2dx.
\end{equation}
\end{lema}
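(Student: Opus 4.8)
The plan is to expand the integrand explicitly in terms of $v$ and the cut-off $\theta$, retain the favourable sign of the term carrying $|\n v|^2$, and dispose of the only dangerous term by an integration by parts rather than by Cauchy--Schwarz. Recalling that $w_1=\theta v$ and $w_2=(1-\theta)v$, a direct computation gives
\[
\n w_1\cdot\n w_2=\theta(1-\theta)|\n v|^2+(1-2\theta)\,v\,\n\theta\cdot\n v-|\n\theta|^2 v^2 .
\]
Since $\rho\geq 0$ and $0\leq\theta\leq 1$, the first summand contributes a non-negative amount to $\into\rho\,\n w_1\cdot\n w_2\,dx$ and can simply be discarded. For the last summand, $\rho|\n\theta|^2$ is a bounded function on $\overline{\Omega}$ (indeed $\n\theta$ is supported in the annulus $\{r_0/2\leq|x|\leq r_0\}$, away from the pole), so $-\into\rho|\n\theta|^2v^2\,dx\geq -C\into v^2\,dx$.

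The only term requiring work is the cross term $\into\rho(1-2\theta)\,v\,\n\theta\cdot\n v\,dx$. Here I would write $v\,\n v=\tfrac12\n(v^2)$ and integrate by parts; since $v\in C_0^\infty(\Omega)$ there is no boundary contribution, and one obtains
\[
\into\rho(1-2\theta)\,v\,\n\theta\cdot\n v\,dx=-\frac12\into \textrm{div}\big(\rho(1-2\theta)\n\theta\big)\,v^2\,dx .
\]
The vector field $\rho(1-2\theta)\n\theta$ is smooth and compactly supported in $\Omega$ (again because $\n\theta$ lives in the annulus away from the origin), so its divergence is a bounded function on $\Omega$, whence this term is likewise $\geq -C\into v^2\,dx$. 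Adding the three contributions gives
\[
\into\rho\,\n w_1\cdot\n w_2\,dx\geq -\into\rho|\n\theta|^2v^2\,dx-\frac12\into\textrm{div}\big(\rho(1-2\theta)\n\theta\big)v^2\,dx\geq -C(\Omega,\rho,r_0)\into v^2\,dx,
\]
which is precisely \eqref{eq102}.

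There is no genuine obstacle in this argument; the one point that must not be overlooked is that the cross term has to be treated by integration by parts, since a Cauchy--Schwarz bound would leave behind an uncontrolled multiple of $\into|\n v|^2\,dx$. One also checks that every coefficient function that shows up — namely $\rho$, $\theta$, $\n\theta$ and the divergences built from them — is smooth and supported away from $x=0$, so that the singular potential $1/|x|^2$ never enters the estimate and the whole computation is an elementary one on $H^1_0(\Omega)$.
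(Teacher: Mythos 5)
Your proof is correct and follows essentially the same route as the paper: expand $\n w_1\cdot\n w_2$ in terms of $v$ and $\theta$, drop the non-negative $\rho\,\theta(1-\theta)|\n v|^2$ term, absorb the $-\rho|\n\theta|^2v^2$ term directly, and handle the cross term by writing $v\,\n v=\tfrac12\n(v^2)$ and integrating by parts against $\mathrm{div}\big(\rho(1-2\theta)\n\theta\big)$, exactly as in the paper's estimate \eqref{eq103}. Your remark that a naive Cauchy--Schwarz bound would leave an uncontrolled $\into|\n v|^2\,dx$ correctly identifies why the integration by parts is the key step.
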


\begin{proof}[Proof of Lemma \ref{l9}]
From the boundary conditions, integrating by parts we have
\begin{align}\label{eq103}
\into\rho \nabla w_1\cdot\nabla w_2dx
&=\int\rho\theta(1-\theta)|\nabla v|^2dx+\into \rho v\nabla v\cdot
\nabla \rho(1-2\theta)dx-\int\theta|\nabla
\theta|^2|v|^2dx\nonumber\\
&\geq  \frac{1}{2}\int_{\Omega_{r_0}\setminus\Omega_{r_0/2}}\nabla
(|v|^2)\cdot \nabla \theta (1-2\theta)\rho dx
-||\rho||_{\infty}||D\theta||_{\infty}^{2}\int_{\Omega}|v|^2dx\nonumber\\
&=-\frac{1}{2}\int_{\Omega_{r_0}\setminus\Omega_{r_0/2}}\textrm{div}((1-2\theta)\rho\nabla
\theta)|v|^2dx-||\rho||_{\infty}||D\theta||_{\infty}^{2}
\int_{\Omega}|v|^2dx\nonumber\\
&\geq
-C(||\rho||_{W^{1,\infty}},||\theta||_{W^{2,\infty}})\int_{\Omega}|v|^2dx.
\end{align}
\end{proof}

Now we are able to finalize the proof of Step 2. Indeed, splitting
$v$ as before we get
\begin{equation*}
\int_{\Omega}|x|^2|\nabla v|^2dx
= \int_{\Omega_{r_0}}|x|^2|\nabla
w_1|^2dx+\int_{\Omega\setminus\Omega_{r_0/2}} |x|^2|\nabla
w_2|^2dx+2\int_{\Omega_{r_0}\setminus\Omega_{r_0/2}} |x|^2 \nabla
w_1\cdot\nabla w_2dx
\end{equation*}
 Applying \eqref{eqr0} to $w_1$ we obtain
\begin{align}\label{equ42}
\int_{\Omega}|x|^2|\nabla v|^2dx 
 &\leq R_{\Omega}^{2}\Big[\int_{\Omega} |\nabla v|^2dx-\frac{N^2}{4}
 \into \frac{w_1^2}{|x|^2}dx\Big]+C\int_{\Omega} v^2dx-\nonumber\\
 &-\int_{\Omega_{r_0}\setminus\Omega_{r_0/2}} 2(R_{\Omega}^{2}-|x|^2)\nabla w_1\cdot\nabla
w_2dx.
\end{align}

Adding $\rho=2(R_{\Omega}^{2}-|x|^2))$ in Lemma \ref{l9}, from
(\ref{equ42}) we get
\begin{equation}\label{eq104}
\into|x|^2|\nabla v|^2dx\leq R_{\Omega}^{2}\Big[\into |\nabla
v|^2dx-\frac{N^2}{4}\int_{\Omega_{r_0}}
\frac{w_1^2}{|x|^2}dx\Big]+C(\Omega,r_0)\into v^2dx.
\end{equation}
On  the other hand we remark that
\begin{align}\label{equ43}
\int_{\Omega_{r_0}}\frac{w_1^2}{|x|^2} &
\geq \into \frac{v^2}{|x|^2}dx-C(r_0)\into v^2dx.
\end{align}
From (\ref{eq104}) and (\ref{equ43}) the  conclusion of Theorem
\ref{tu8} yields choosing  $r_0$ small enough, $r_0\leq R_\Omega$.
\end{proof}

\noindent {\bf Acknoledgements} The author wish to thank   Enrique
Zuazua and Adimurthi for useful suggestions and advices.

 Partially supported by the Grants MTM2008-03541 and
 MTM2011-29306-C02-00 of the MICINN (Spain), project
  PI2010-04 of the Basque Government, the ERC Advanced Grant FP7-246775
  NUMERIWAVES, the ESF Research Networking
Program OPTPDE, the grant PN-II-ID-PCE-2011-3-0075 of CNCS-UEFISCDI
Romania and a doctoral fellowship from UAM (Universidad Aut\'{o}noma
de Madrid).

\end{document}